\definecolor{dark-blue}{rgb}{0.15,0.15,0.4}
\definecolor{dark-red}{rgb}{0.4,0.15,0.15}
\definecolor{medium-blue}{rgb}{0,0,0.5}
\crefname{lemma}{Lemma}{Lemmata}
\Crefname{lemma}{Lemma}{Lemmata}
\newcommand{\citecomment}[2][]{\citen{#2}#1\citevar}
\newcommand{\citeone}[1]{\citecomment{#1}}
\newcommand{\citetwo}[2][]{\citecomment[, #1]{#2}}
\newcommand{\citevar}{\@ifnextchar\bgroup{; \citeone}{\@ifnextchar[{; \citetwo}{]\xspace}}}
\newcommand{\citefirst}{\@ifnextchar\bgroup{\citeone}{\@ifnextchar[{\citetwo}{]}}}
\newcommand{\cites}{[\citefirst}
\newtheorem{theorem}{Theorem}[section]
\newtheorem{definition}[theorem]{Definition}
\newtheorem{lemma}[theorem]{Lemma}
\newtheorem{proposition}[theorem]{Proposition}
\theoremstyle{definition}
\numberwithin{equation}{section}
\renewcommand{\Re}{\mathrm{Re}}
\renewcommand{\Im}{\mathrm{Im}}
\renewcommand{\epsilon}{\varepsilon}
\patchcmd{\section}{\scshape}{\bfseries}{}{}
\renewcommand{\@secnumfont}{\bfseries}
\newcommand{\norm}[1]{\left\lVert#1\right\rVert}
\makeatletter\newcommand{\tpmod}[1]{{\@displayfalse \pmod{#1}}}
\renewcommand*\backref[1]{$\uparrow$\thinspace\ifx#1\relax \else #1 \fi}
\newcommand{\MYhref}[3][dark-red]{\href{#2}{\color{#1}{#3}}}
\begin{document}

\title[A Subconvex Metaplectic PGT and the Shimura Correspondence]{A Subconvex Metaplectic Prime Geodesic Theorem and the Shimura Correspondence}

\author[Ikuya Kaneko]{Ikuya Kaneko\,\orcidlink{0000-0003-4518-1805}}

\subjclass[2020]{Primary 11F27, 11F72; Secondary 11F30, 11M36}

\keywords{Prime geodesic theorem, varying weight, metaplectic covering, Kubota character, Shimura correspondence, explicit formula}

\cleanlookdateon

\date{\today~(ISO 8601)}

\begin{abstract}
We investigate the prime geodesic theorem with an error term dependent on the varying weight and its higher metaplectic coverings in the arithmetic setting, each admitting subconvex refinements despite the softness of our input. The former breaks the $\frac{3}{4}$-barrier due to Hejhal (1983) when the multiplier system is nontrivial, while the latter represents the~first theoretical evidence supporting the prevailing consensus on the optimal exponent $1+\varepsilon$ when the multiplier system specialises to the Kubota character. Our argument relies on the elegant phenomenon that the main term in the prime geodesic theorem is governed by the size of the largest residual Laplace eigenvalue, thereby yielding a simultaneous polynomial~power-saving in the error term relative to its Shimura correspondent where the multiplier system~is~trivial.
\end{abstract}

\maketitle

\section{Introduction and statement of results}
Let $\mathcal{M}$ be a smooth, connected Riemannian orbifold. A~\textit{closed geodesic} on $\mathcal{M}$ is a geodesic that returns to its starting point with the same tangent direction, constituting a fundamental geometric and dynamical invariant. Closed geodesics underpin periodic orbits of the geodesic flow, while the systole acts as a major threshold in global geometry and dynamics. Notably, it is well known that the number of simple closed geodesics of length at most a prescribed value grows polynomially~\cites{Mirzakhani2004}{Mirzakhani2008}{Rivin2001}, while the total number of closed geodesics~grows exponentially~\cites{Margulis1969}{PollicottSharp1998}{Sinai1966}. On locally symmetric spaces, the trace formula provides a bridge between closed geodesics and the spectrum of the Casimir operator, which quantises the geodesic flow. If $\mathcal{M}$ is arithmetic in nature, then the lengths and multiplicities of closed geodesics encode intrinsic relationships with the asymptotic properties of class numbers and regulators of binary quadratic forms~\cites{Sarnak1982}{Sarnak1985}, thereby fostering a more comprehensive and deeper understanding of closed geodesics on such $\mathcal{M}$ at the confluence of various branches of mathematics, including dynamical systems, ergodic theory, geometry, and number theory.

Let $\mathrm{G} \coloneqq \mathrm{SO}_{0}(d, 1)$ be a connected, noncompact, semisimple Lie group of real rank $1$, let $\mathrm{K}$ be the maximal compact subgroup of $\mathrm{G}$, and let $\Gamma$ be a discrete, torsion-free subgroup of $\mathrm{G}$. For ease of exposition, we consider in greater generality the $d$-dimensional locally symmetric space
\begin{equation}\label{eq:locally-symmetric-space}
\mathbb{X}_{d} \coloneqq \Gamma \backslash \mathbb{H}_{d} \cong \Gamma \backslash \mathrm{G}/\mathrm{K}.
\end{equation}
There exists a bijective correspondence between a closed geodesic $C_{\gamma}$ and a conjugacy class~of $\gamma \in \Gamma$. It is convenient to denote its length by $l(\gamma)$ and its norm by $\mathrm{N}(\gamma) \coloneqq e^{l(\gamma)}$, to which one may attach the hyperbolic analogue $\Lambda_{\Gamma}(\gamma)$ of the von Mangoldt function. If $\nu: \Gamma \to \mathrm{GL}_{D}(\mathbb{C})$ is a unitary representation, then the \textit{twisted Chebyshev-like counting function} is defined by
\begin{equation}\label{eq:Psi}
\Psi_{\Gamma}^{(d)}(x, \nu) \coloneqq \sum_{\mathrm{N}(\gamma) \leq x} \mathrm{tr}(\nu(\gamma)) \Lambda_{\Gamma}(\gamma), \qquad \Psi_{\Gamma}^{(d)}(x) \coloneqq \Psi_{\Gamma}^{(d)}(x, \mathbbm{1}).
\end{equation}
Understanding the asymptotic behaviour of $\Psi_{\Gamma}^{(d)}(x, \nu)$ falls under the umbrella of vast swaths of investigations into the \textit{prime geodesic theorem}, which has garnered distinguished attention among number theorists over the past several decades. The structure of the present paper is tripartite, with each part devoted to a distinct yet theoretically interrelated topic as follows:
{\begin{enumerate}[(i)]
\item A vector-valued twisted prime geodesic theorem valid for any cofinite Fuchsian group;
\item An arithmetic refinement in weight $\frac{1}{2}$, in conjunction with the Shimura correspondence;
\item A subconvex $n$-fold metaplectic prime geodesic theorem extending to fractional weights.
\end{enumerate}

\subsection{Varying weight}
As a precursor, for $d = 2$ and $\nu = \mathbbm{1}$, Selberg laid the cornerstone for the prime geodesic theorem, with subsequent extensions to the case of $\nu \ne \mathbbm{1}$ by Hejhal~\cites[Theorem~3.5]{Hejhal1983}. Let $\Gamma \subset \mathrm{PSL}_{2}(\mathbb{R})$, and let $\nu: \Gamma \to \mathrm{GL}_{D}(\mathbb{C})$ be a unitary character of weight $k \in (-1, 1]$ and dimension $D \geq 1$, for which the existence of a continuous family of multiplier systems $\nu$ and automorphic forms that transform with respect to $\nu$ is guaranteed by~\cites[Page~534]{Petersson1938}; cf.~\cites[Theorem~5.1]{BurrinvonEssen2024}[Proposition~2.2]{Hejhal1983}. Then the pseudoprime counting function $\Psi_{\Gamma}^{(2)}(x, \nu)$ obeys an asymptotic formula exhibiting a structural resemblance to that of the standard prime counting function, namely if $\mathcal{L}_{k}(\Gamma, \nu)$ stands for finite-dimensional~Hilbert space of square-integrable automorphic forms of weight $k$ and multiplier system $\nu$ on $\Gamma$, then
\begin{equation}\label{eq:Hejhal}
\Psi_{\Gamma}^{(2)}(x, \nu) = \mathop{\sum \sum}_{\substack{f \in \mathcal{L}_{k}(\Gamma, \nu) \\ \frac{1}{2} < s_{f} \leq 1-\frac{|k|}{2}}} \frac{x^{s_{f}}}{s_{f}}+\mathcal{E}_{\Gamma}^{(2)}(x, \nu),
\end{equation}
where the inner summation is taken over the small eigenvalues $\lambda_{f} = s_{f}(1-s_{f}) \in [\frac{|k|}{2}(1-\frac{|k|}{2}), \frac{1}{4})$ of the Laplacian on $\Gamma \backslash \mathbb{H}_{2}$, with the convention that the sum is considered empty for $k = -1$, whereas $\mathcal{E}_{\Gamma}^{(2)}(x, \nu)$ serves as an error term satisfying~\cites[Theorem~7]{BurrinvonEssen2024}[Theorem~3.4]{Hejhal1983}
\begin{equation}\label{eq:3/4}
\mathcal{E}_{\Gamma}^{(2)}(x, \nu) \ll_{\Gamma, \varepsilon} x^{\frac{3}{4}+\varepsilon}.
\end{equation}
Any progress beyond the barrier~\eqref{eq:3/4} has proven elusive in full generality. Furthermore, the optimal exponent is conjectured to be $\frac{1}{2}+\varepsilon$ due to the validity of an analogue of the~\textit{Riemann hypothesis} for the Selberg zeta function apart from at most a finite number of the exceptional zeros. It follows from the $\Omega$-result of Hejhal~\cites[Theorem~3.8]{Hejhal1983} that the conjecture is sharp up to an arbitrarily small quantity $\varepsilon > 0$, although it remains well beyond the reach of~current technology; see~\cites[Page~40]{Sarnak1980} for what appears to be its earliest, albeit purely speculative, consideration of this kind, consistent with the folklore philosophy of prime number theory.
\begin{definition}\label{def:delta-k-nu}
Let $\Gamma \subset \mathrm{PSL}_{2}(\mathbb{R})$, and let $\nu: \Gamma \to \mathrm{GL}_{D}(\mathbb{C})$ be a unitary multiplier system of weight $k \in (-1, 1]$ and dimension $D$. We define $\delta_{1}^{(2)}(\nu) \in [\frac{1}{2}, \frac{3}{4}]$ so that
\begin{equation}
\mathcal{E}_{\Gamma}^{(2)}(x, \nu) \ll_{\Gamma, \nu, \varepsilon} x^{\delta_{1}^{(2)}(\nu)+\varepsilon},
\end{equation}
while we define $\delta_{2}^{(2)}(\nu) \in [\frac{1}{2}, \frac{2}{3}]$ so that there exists a constant $\eta_{2}^{(2)}(\nu) \geq 0$ such that
\begin{equation}\label{eq:def-2}
\Big(\frac{1}{Y} \int_{X}^{X+Y} |\mathcal{E}_{\Gamma}^{(2)}(x, \nu)|^{2} \, dx \Big)^{\frac{1}{2}} \ll_{\Gamma, \nu, \varepsilon} X^{\delta_{2}^{(2)}(\nu)+\varepsilon} \Big(\frac{X}{Y} \Big)^{\eta_{2}^{(2)}(\nu)}.
\end{equation}
\end{definition}

Our first main result marks the first polynomial power-saving refinement over~\eqref{eq:3/4}, which had persisted as the best known unconditional result in such generality for over four decades.
\begin{theorem}\label{thm:main}
Keep the notation as in \cref{def:delta-k-nu}. Then unconditionally
\begin{equation}\label{eq:beyond-3/4}
\delta_{1}^{(2)}(\nu) \leq \frac{3}{4}-\frac{|k|}{4}.
\end{equation}
\end{theorem}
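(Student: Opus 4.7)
The plan is to refine Hejhal's smoothed explicit-formula derivation of~\eqref{eq:3/4} by tracking the dependence on the weight $k\in(-1,1]$, exploiting the structural observation that for a nontrivial multiplier system $\nu$ the largest residual spectral parameter in $(\tfrac12,1)$ satisfies $s_{f}\le 1-|k|/2$, so that the main term in~\eqref{eq:Hejhal} is of size $x^{1-|k|/2}$ rather than $x$.

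Concretely, I would start from a truncated Perron representation of $\Psi_{\Gamma}^{(2)}(x,\nu)$ via the logarithmic derivative $-Z_{\Gamma}'/Z_{\Gamma}(s,\nu)$ of the Selberg zeta function, smoothed by convolution against a non-negative test kernel supported in a short window of length $y$. Shifting the contour from $\Re s=1+\varepsilon$ to $\Re s=\tfrac12$ picks up residues precisely at the residual spectral parameters $s_{f}\in(\tfrac12,1-|k|/2]$, producing the main term of~\eqref{eq:Hejhal}; crucially, for $k\neq 0$ there is no residue at $s=1$, since the constant function is absent from $\mathcal{L}_{k}(\Gamma,\nu)$. The vertical integral on $\Re s=\tfrac12$, truncated at $|\Im s|\le T$, reduces via the weight-$k$ Weyl law $\#\{f\in\mathcal{L}_{k}(\Gamma,\nu):|t_{f}|\le T\}\asymp T^{2}$ together with Abel summation to an $O_{\Gamma,\nu,\varepsilon}(x^{1/2+\varepsilon}T)$ contribution.

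The key new ingredient is that in the ensuing short-window analysis the unsmoothing error scales not like $y$ but like the variation of the main term across the window, namely $y\cdot x^{-|k|/2}$ up to logarithmic losses. Indeed, applying the smoothed explicit formula anew to the short-window sum $\Psi_{\Gamma}^{(2)}(x+y,\nu)-\Psi_{\Gamma}^{(2)}(x,\nu)$ extracts as its leading contribution the derivative of the residual main term, while the cuspidal contribution is of spectral order $x^{1/2+\varepsilon}T$ and the continuous part is of lower order. Balancing the spectral error $x^{1/2+\varepsilon}T$ against the unsmoothing error $y\cdot x^{-|k|/2}$ under the coupling $T\asymp x/y$ forces $y=x^{3/4+|k|/4}$, yielding a total error of size $x^{3/4-|k|/4+\varepsilon}$, which is the asserted bound.

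The principal obstacle is justifying the sharp $|k|$-dependence of the unsmoothing step. In the weight-$0$ setting, monotonicity of $\Psi_{\Gamma}^{(2)}(x,\mathbbm{1})$ makes the analogous estimate immediate, but for signed twisted sums a crude bound via $|\mathrm{tr}(\nu(\gamma))|\le D$ only yields the weaker $y$ in place of $y\cdot x^{-|k|/2}$. I would circumvent this by extracting the short-window main contribution directly from the trace formula rather than from an a priori majorant, shifting the internal Perron contour all the way down to $\Re s=\tfrac12$ already within the short-window analysis; this isolates the derivative of the residual spectrum as the leading short-window output while keeping the cuspidal contribution below the target exponent $\tfrac34-\tfrac{|k|}4$ after the final optimisation in $y$ and $T$.
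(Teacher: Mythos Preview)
Your overall architecture matches the paper's: an explicit formula with truncation error $x^{1-|k|/2+\varepsilon}/T$, a spectral sum bounded by $x^{1/2}T$ via Weyl's law, and the optimisation $T=x^{(1-|k|)/4}$. You also correctly isolate the crux, namely that for a signed counting function the Perron near-diagonal (equivalently, unsmoothing) error cannot be controlled by monotonicity and needs the twisted short-window count $\Psi_{\Gamma}^{(2)}(x+y,\nu)-\Psi_{\Gamma}^{(2)}(x,\nu)$ itself.

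The gap is in your resolution of this circularity. Your proposal to ``shift the internal Perron contour down to $\Re s=\tfrac12$ within the short-window analysis'' does not break the loop: the inner Perron formula again carries a truncation error governed by a short-window sum, so you face the same obstruction one level down, ad infinitum. What the short-window application of the explicit formula actually yields is
\[
\Psi_{\Gamma}^{(2)}(x+y,\nu)-\Psi_{\Gamma}^{(2)}(x,\nu)\ \ll\ y\,x^{-|k|/2}+x^{\delta_{1}^{(2)}(\nu)+\varepsilon},
\]
not just $y\,x^{-|k|/2}$; the second term is exactly the quantity you are trying to bound. Feeding this into the Perron error gives an explicit formula with remainder $\ll (x^{1-|k|/2}+x^{\delta_{1}^{(2)}(\nu)})T^{-1}$, and after balancing against $x^{1/2}T$ you obtain the self-improving inequality $\delta_{1}^{(2)}(\nu)\le\max\bigl(\tfrac34-\tfrac{|k|}{4},\ \tfrac14+\tfrac12\delta_{1}^{(2)}(\nu)\bigr)$. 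For $|k|\le\tfrac12$ the first branch already dominates (since Hejhal's $\tfrac34\le 1-\tfrac{|k|}{2}$), and your argument goes through as written. For $|k|>\tfrac12$ it does not: one must \emph{iterate}, starting from Hejhal's $\delta_{1}^{(2)}(\nu)\le\tfrac34$ to get $\tfrac58$, then $\tfrac9{16}$, and so on, the sequence $\tfrac12+2^{-(j+2)}$ eventually dropping below $1-\tfrac{|k|}{2}$, at which point the first branch takes over and yields $\tfrac34-\tfrac{|k|}{4}$. This bootstrap is precisely how the paper closes the argument (see the final paragraph of the proof of Proposition~4.2), and it is the one ingredient your proposal is missing.
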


\cref{thm:main} recovers~\eqref{eq:3/4} for $k = 0$. The argument relies fundamentally on induction with respect to $\delta_{1}^{(2)}(\nu)$ and a crude form of the \textit{Brun--Titchmarsh-type theorem} over short~intervals (\cref{prop:Brun-Titchmarsh}), which in turn facilitates a straightforward derivation of the \textit{explicit formula} (\cref{prop:spectral-explicit-formula}) valid for any cofinite Fuchsian group $\Gamma \subset \mathrm{PSL}_{2}(\mathbb{R})$. As shall be discussed in due course, substituting $k = \frac{1}{2}$ into~\eqref{eq:beyond-3/4} leads to a one-line proof of~\cites[Theorem~3.1]{Matthes1994}.

Furthermore, the second moment theory of the prime geodesic theorem in the $2$-dimensional setting was undertaken by Cherubini and Guerreiro~\cites[Theorems~1.1]{CherubiniGuerreiro2018}, with subsequent unconditional improvements in the arithmetic setting by Balog et al.~\cites[Theorem~1.1]{BalogBiroHarcosMaga2019} and by the author~\cites[Theorem~1.1]{Kaneko2020}. An adaptation of the machinery of Cherubini and Guerreiro via the vector-valued Selberg trace formula (\cref{thm:Selberg-trace-formula}) leads to a power-saving refinement of~\cites[Theorems~1.1]{CherubiniGuerreiro2018} when the underlying multiplier system is nontrivial.
\begin{theorem}\label{thm:CG}
Keep the notation as in \cref{def:delta-k-nu}. Then unconditionally
\begin{equation}\label{eq:CG}
\delta_{2}^{(2)}(\nu) \leq \frac{2}{3}-\frac{|k|}{6}, \qquad \eta_{2}^{(2)}(\nu) \leq \frac{1}{3}.
\end{equation}
\end{theorem}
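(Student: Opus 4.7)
The plan is to adapt the mean-square framework of Cherubini and Guerreiro~\cites{CherubiniGuerreiro2018} to the vector-valued setting, drawing on the same vector-valued Selberg trace formula (\cref{thm:Selberg-trace-formula}) and Brun--Titchmarsh-type bound over short intervals (\cref{prop:Brun-Titchmarsh}) as underpin the proof of~\cref{thm:main}, and now also on the pointwise estimate $\delta_{1}^{(2)}(\nu) \leq \frac{3}{4}-\frac{|k|}{4}$ granted by~\cref{thm:main} itself.

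First I would introduce a smoothing parameter $h > 0$ and the short-average $\widetilde{\Psi}_{h}(x, \nu) \coloneqq h^{-1}\int_{x}^{x+h} \Psi_{\Gamma}^{(2)}(t, \nu) \, dt$. Applied with a test function tuned to this smoothing, the vector-valued Selberg trace formula should yield a spectral identity of the shape
\begin{equation*}
\widetilde{\Psi}_{h}(x, \nu) - \mathop{\sum \sum}_{s_{f} \in (\frac{1}{2},\, 1-\frac{|k|}{2}]} \frac{x^{s_{f}}}{s_{f}} = -\sum_{|t_{j}| \leq T} \frac{x^{\frac{1}{2}+it_{j}}}{\frac{1}{2}+it_{j}} + O_{\varepsilon}\!\left(\frac{x^{1+\varepsilon}}{Th}\right),
\end{equation*}
valid for any $T \geq 1$, while the Brun--Titchmarsh-type bound fed with the exponent $\frac{3}{4}-\frac{|k|}{4}$ would control the unsmoothing error by $|\Psi_{\Gamma}^{(2)}-\widetilde{\Psi}_{h}| \ll h\, x^{\frac{1}{2}-\frac{|k|}{2}+\varepsilon} + x^{\frac{3}{4}-\frac{|k|}{4}+\varepsilon}$. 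This is the sole step at which the weight-dependent saving is injected.

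Next I would square and integrate this identity over $[X, X+Y]$. The diagonal of the spectral sum contributes $\ll X \log T$ through the Weyl law, whereas the off-diagonal is bounded by $X^{2}T/Y$ up to logarithms, using the oscillatory integral $Y^{-1}\int_{X}^{X+Y} x^{1+i(t_{j}-t_{k})}\, dx \ll \min(X,\, X^{2}/(Y|t_{j}-t_{k}|))$ in conjunction with a dyadic decomposition over the eigenvalue gaps. Collecting terms would then produce the composite estimate
\begin{equation*}
\frac{1}{Y} \int_{X}^{X+Y} \bigl|\mathcal{E}_{\Gamma}^{(2)}(x, \nu)\bigr|^{2} dx \ll_{\Gamma, \nu, \varepsilon} X^{\varepsilon}\!\left(X + \left(\frac{X}{Th}\right)^{\!2} + \frac{X^{2} T}{Y} + h^{2} X^{1-|k|} + X^{\frac{3}{2}-\frac{|k|}{2}}\right).
\end{equation*}
The choice $T = Y^{1/3} X^{-|k|/3}$ and $h = X^{1/2+|k|/3} Y^{-1/3}$ balances the three governing terms at the common value $X^{4/3-|k|/3}(X/Y)^{2/3}$, which upon extracting square roots delivers the claimed exponents $\delta_{2}^{(2)}(\nu) \leq \frac{2}{3}-\frac{|k|}{6}$ and $\eta_{2}^{(2)}(\nu) \leq \frac{1}{3}$; in the complementary short-interval regime $Y \leq X^{3/4+|k|/4}$ the pointwise bound of~\cref{thm:main} already suffices.

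The hard part should be the off-diagonal spectral estimate, where uniformity in the multiplier system $\nu$ of both the Weyl-type eigenvalue count and the pair-correlation bound has to be secured within the vector-valued framework. Once the trace formula and Brun--Titchmarsh input from~\cref{thm:main} are in hand, however, the remaining manipulations closely mirror the scalar arguments of~\cites{CherubiniGuerreiro2018}, with the weight $k$ entering only through the sharpened local variation bound and through the correspondingly enlarged residual spectrum that is absorbed into the main term.
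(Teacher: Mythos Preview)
Your high-level plan --- transplant the Cherubini--Guerreiro mean-square machinery into the vector-valued trace formula --- is exactly what the paper does. But the details of your sketch diverge from the paper's argument in a way that creates a genuine gap.

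The paper works with a \emph{single} smoothing parameter $\delta$ on the logarithmic scale: it convolves $g_{\rho}(u) = 2\sinh(|u|/2)\,\mathbf{1}_{[0,\rho]}(|u|)$ with a bump $q_{\delta}$ to obtain admissible test functions $g_{\pm}$, and the weight-$k$ saving enters through the \emph{trivial} bound $\Psi_{\Gamma}^{(2)}(x,\nu)\ll x^{1-|k|/2}$ (coming from the reduced main term), which yields the squeeze inequality with error $O(\delta x^{1-|k|/2})$. The resulting composite bound is $X^{2}/(\delta Y)+\delta^{2}X^{2-|k|}$, balanced at $\delta = Y^{-1/3}X^{|k|/3}$. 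No appeal to \cref{thm:main} or to the Brun--Titchmarsh proposition is needed; the entire $|k|$-dependence comes from the size of the bottom residual eigenvalue (cf.\ \cref{lem:h+-}).

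Your route instead introduces two parameters $T$ and $h$ and claims the $|k|$-saving is fed in through the pointwise exponent $\tfrac{3}{4}-\tfrac{|k|}{4}$ via the unsmoothing bound $|\Psi-\widetilde{\Psi}_{h}|\ll h\,x^{1/2-|k|/2}+x^{3/4-|k|/4}$. This step does not follow from \cref{prop:Brun-Titchmarsh}: that proposition gives $\Psi(x+u,\nu)-\Psi(x,\nu)=\sum_{f}[(x+u)^{s_{f}}-x^{s_{f}}]/s_{f}+O(x^{\delta_{1}(\nu)})$, whose main part is $\ll u\,x^{-|k|/2}$, not $u\,x^{1/2-|k|/2}$; moreover it carries the range restriction $u\geq x^{(1+|k|)/2}$, which your chosen $h=X^{1/2+|k|/3}Y^{-1/3}$ violates for every $Y>1$ when $|k|>0$. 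Likewise the truncation error $x^{1+\varepsilon}/(Th)$ for the smoothed explicit formula is not what the explicit formula delivers and is not justified. In short, your intermediate bounds are reverse-engineered to land on the correct final exponent but do not hold as stated; replacing your two-parameter setup by the paper's single-$\delta$ convolution and feeding in the trivial bound $\Psi\ll x^{1-|k|/2}$ (rather than the pointwise theorem) repairs the argument and is in fact simpler.
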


\cref{thm:CG} aligns with \cref{thm:main} when substituted into the mean-to-max result.
\begin{theorem}\label{thm:mean-to-max}
Keep the notation as in \cref{def:delta-k-nu}. Then unconditionally
\begin{equation}\label{eq:mean-to-max}
\delta_{1}^{(2)}(\nu) \leq \frac{\delta_{2}^{(2)}(\nu)+(1-\frac{|k|}{2}) \eta_{2}^{(2)}(\nu)}{1+\eta_{2}^{(2)}(\nu)}.
\end{equation}
\end{theorem}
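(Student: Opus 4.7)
The plan is to execute a classical mean-to-max transference: promote the $L^{2}$ bound \eqref{eq:def-2} to a pointwise estimate by arguing that if $|\mathcal{E}_{\Gamma}^{(2)}(\cdot, \nu)|$ attains the value $V$ at some $x_{0} \in [X, 2X]$, then $|\mathcal{E}_{\Gamma}^{(2)}(x, \nu)| \gg V$ persists throughout a surrounding interval of length scaling like $V X^{|k|/2}$, at which point \eqref{eq:def-2} yields a reciprocal constraint on $V$ to be optimised.

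Concretely, fix $X$ large, $x_{0} \in [X, 2X]$, and put $V \coloneqq |\mathcal{E}_{\Gamma}^{(2)}(x_{0}, \nu)|$. For $1 \leq Y \leq X$ and $x \in [x_{0}, x_{0}+Y]$, the decomposition \eqref{eq:Hejhal} gives
\begin{equation*}
|\mathcal{E}_{\Gamma}^{(2)}(x, \nu) - \mathcal{E}_{\Gamma}^{(2)}(x_{0}, \nu)| \leq \bigl|\Psi_{\Gamma}^{(2)}(x, \nu) - \Psi_{\Gamma}^{(2)}(x_{0}, \nu)\bigr| + \bigl|M(x) - M(x_{0})\bigr|,
\end{equation*}
where $M(x)$ denotes the residual main term in \eqref{eq:Hejhal}. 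Since every $s_{f}$ appearing there satisfies $s_{f} \leq 1-\tfrac{|k|}{2}$, direct differentiation produces $|M(x) - M(x_{0})| \ll Y X^{-|k|/2}$, while the crude Brun--Titchmarsh-type estimate of \cref{prop:Brun-Titchmarsh} delivers the same majorant (up to $X^{\varepsilon}$) for the short-interval increment of $\Psi_{\Gamma}^{(2)}(\cdot, \nu)$, crucially \emph{without} invoking the subconvex exponent that \cref{thm:main} supplies. Choosing $Y$ so that both contributions are at most $V/4$ forces $|\mathcal{E}_{\Gamma}^{(2)}(x, \nu)| \geq V/2$ throughout $[x_{0}, x_{0}+Y]$, whence $\frac{1}{Y} \int_{x_{0}}^{x_{0}+Y} |\mathcal{E}_{\Gamma}^{(2)}(x, \nu)|^{2} \, dx \gg V^{2}$.

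Combining this lower bound with \eqref{eq:def-2} (applied with $x_{0}$ in place of $X$, which is permissible since $x_{0} \asymp X$) yields $V \ll X^{\delta_{2}^{(2)}(\nu)+\varepsilon} (X/Y)^{\eta_{2}^{(2)}(\nu)}$. The free parameter $Y \asymp V X^{|k|/2}$---which is consistent with the short-interval constraint above, and which we may assume satisfies $1 \leq Y \leq X$ since the conclusion is vacuous otherwise---optimises this inequality to $V^{1+\eta_{2}^{(2)}(\nu)} \ll X^{\delta_{2}^{(2)}(\nu) + (1-|k|/2)\eta_{2}^{(2)}(\nu) + \varepsilon}$, from which \eqref{eq:mean-to-max} follows on taking logarithms.

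The only substantive input is the weight-sensitive Brun--Titchmarsh estimate furnished by \cref{prop:Brun-Titchmarsh}; the remaining manipulations are elementary balancing. Consequently, the principal care required is to ensure that the short-interval bound used for $\Psi_{\Gamma}^{(2)}(\cdot, \nu)$ does not quietly smuggle in the conclusion of \cref{thm:main}, thereby reducing the deduction to a tautology. Provided the Brun--Titchmarsh input is genuinely independent of the subconvex exponent we are trying to extract, no serious obstacle is anticipated.
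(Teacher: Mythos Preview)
Your two-sided control of $|\mathcal{E}_\Gamma^{(2)}(x,\nu) - \mathcal{E}_\Gamma^{(2)}(x_0,\nu)|$ via \cref{prop:Brun-Titchmarsh} is circular, and this is precisely the tautology you warn against in your final paragraph. \cref{prop:Brun-Titchmarsh} is nothing more than the identity $\Psi = \mathcal{M} + \mathcal{E}$ subtracted against its translate: its error term is $O(x^{\delta_1^{(2)}(\nu)+\varepsilon})$ \emph{by definition} of $\delta_1^{(2)}(\nu)$, not by virtue of any independent short-interval input. Consequently, at an extremal $x_0$ where $V = |\mathcal{E}_\Gamma^{(2)}(x_0,\nu)|$ lies within $X^{\varepsilon}$ of $X^{\delta_1^{(2)}(\nu)}$, the residual $O(X^{\delta_1^{(2)}(\nu)+\varepsilon})$ coming from \cref{prop:Brun-Titchmarsh} is already of size $\asymp V$, and your persistence inequality $|\mathcal{E}_\Gamma^{(2)}(x,\nu)-\mathcal{E}_\Gamma^{(2)}(x_0,\nu)| \leq V/2$ cannot be secured. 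The only non-circular substitute---the triangle inequality $|\Psi_\Gamma^{(2)}(x,\nu)-\Psi_\Gamma^{(2)}(x_0,\nu)| \leq D \bigl(\Psi_\Gamma^{(2)}(x,\mathbbm{1}) - \Psi_\Gamma^{(2)}(x_0,\mathbbm{1})\bigr) \sim DY$ via the untwisted prime geodesic theorem---forfeits the factor $X^{-|k|/2}$ entirely and yields only the weight-zero conclusion $\delta_1^{(2)}(\nu) \leq (\delta_2^{(2)}(\nu)+\eta_2^{(2)}(\nu))/(1+\eta_2^{(2)}(\nu))$.

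The paper sidesteps this by arguing one-sidedly rather than through $|\mathcal{E}(x)-\mathcal{E}(x_0)|$. Taking $X$ extremal with $\Re(\mathcal{E}_\Gamma^{(2)}(X,\nu)) \geq X^{\delta_1^{(2)}(\nu)}$, it bounds $\Re(\mathcal{E}_\Gamma^{(2)}(x,\nu))$ from \emph{below} on $[X,\, X+X^{\delta_1^{(2)}(\nu)+|k|/2}]$ by $\Re(\Psi_\Gamma^{(2)}(X,\nu)) - \mathcal{M}_\Gamma^{(2)}(x,\nu) = \Re(\mathcal{E}_\Gamma^{(2)}(X,\nu)) - \bigl(\mathcal{M}_\Gamma^{(2)}(x,\nu)-\mathcal{M}_\Gamma^{(2)}(X,\nu)\bigr)$, invoking only the Lipschitz bound $\mathcal{M}_\Gamma^{(2)}(x,\nu)-\mathcal{M}_\Gamma^{(2)}(X,\nu) \ll YX^{-|k|/2}$ on the residual main term. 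No estimate on $\mathcal{E}$ enters the persistence step, which is exactly where your argument closes on itself.
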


\subsection{Shimura correspondence}\label{subsect:Shimura}
If $\Gamma$ is arithmetic and $\nu$ is trivial, then the $\frac{3}{4}$-barrier~\eqref{eq:3/4} is surpassed in the prior breakthroughs~\cites{Cai2002}{Iwaniec1984}{Kaneko2025-4}{LuoSarnak1995}{LuoRudnickSarnak1995}{Koyama1998}{SoundararajanYoung2013}. Why do arithmetic orbifolds admit a refinement of the error term in~\eqref{eq:3/4}? In alignment~with the spectral deformation theory due to Phillips and Sarnak~\cites{PhillipsSarnak1985-2}{PhillipsSarnak1985}, a key dichotomy emerges between the arithmetic and non-arithmetic settings, predicated upon the observation that, for a generic cofinite $\Gamma \subset \mathrm{PSL}_{2}(\mathbb{R})$, the existence of Maa{\ss} forms is severely constrained unless the underlying structure enjoys certain arithmetic or geometric symmetries.

We delve into an analogue of the prime geodesic theorem where $\nu$ specialises to the $3$-fold theta multiplier system, thereby causing the asymptotic~\eqref{eq:Hejhal} to possess a main term~smaller than $x$. In accordance with the reduction of the main term, the prevailing consensus suggests that the error term could likewise be rendered smaller than in~\eqref{eq:beyond-3/4} with the aid of additional oscillations in sums of half-integral weight Kloosterman sums. To begin with, we define
\begin{equation}
\vartheta_{2}(z) \coloneqq \sum_{n \in \mathbb{Z}} e^{\pi i(n+\frac{1}{2})^{2} z}, \qquad \vartheta_{3}(z) \coloneqq \sum_{n \in \mathbb{Z}} e^{\pi in^{2} z}, \qquad \vartheta_{4}(z) \coloneqq \vartheta_{3}(z+1).
\end{equation}
Then the vector $\Theta(z) \coloneqq (\vartheta_{2}(z), \vartheta_{3}(z), \vartheta_{4}(z))^{\mathrm{T}}$ is a holomorphic modular form of weight $\frac{1}{2}$ on $\Gamma = \mathrm{SL}_{2}(\mathbb{Z})$, and the multiplier system $\nu_{\Theta}: \Gamma \mapsto \mathrm{Aut}(\mathbb{C}^{3})$ is called the \textit{$3$-fold theta~multiplier}. This introduces a difference from the classical theta multiplier on $\Gamma_{0}(4)$. Notably, under the Shimura correspondence~\cites{Shimura1973}, the Kohnen plus spaces of scalar-valued half-integral~weight forms on $\Gamma_{0}(4)$ are isomorphic as Hecke modules to spaces of weight $0$ cusp forms on $\Gamma$,~while the full spaces on $\Gamma_{0}(4)$ lift to $\Gamma_{0}(2)$. Nonetheless, the vector-valued nature of the multiplier system $\nu_{\Theta}$ obviates the need for such a lift, allowing the correspondence to be realised directly on $\Gamma$. This provides a more intrinsic perspective without restricting the underlying space;~see \cref{thm:Shimura-vector-valued} for the transition between the spectral parameters.

The following result represents the half-integral weight counterpart of~\cites[Theorem~1.4]{LuoSarnak1995}. The optimisation of the resulting exponent in the spirit of~\cites[Theorem~1.1]{SoundararajanYoung2013} is deferred to future pursuits, as it requires strong arithmetic and geometric input in a brute force~manner.
\begin{theorem}\label{thm:pointwise}
Let $\nu_{\Theta}$ denote the $3$-fold theta multiplier. Then unconditionally
\begin{equation}\label{eq:nu-Theta}
\delta_{1}^{(2)}(\nu_{\Theta}) \leq \frac{3}{5} = 0.6.
\end{equation}
\end{theorem}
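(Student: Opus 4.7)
The plan is to combine the explicit formula of \cref{prop:spectral-explicit-formula} with arithmetic input from the Shimura correspondence. Truncating the spectral expansion at a parameter $T \geq 1$ and separating the residual contribution (which furnishes the main term of order $x^{\frac{3}{4}}$ coming from the largest residual eigenvalue at $s = \frac{3}{4}$), the error satisfies
\[
\mathcal{E}_{\Gamma}^{(2)}(x, \nu_{\Theta}) \ll \Bigg\lvert \sum_{|t_{j}| \leq T} \frac{x^{\frac{1}{2}+it_{j}}}{\frac{1}{2}+it_{j}} \Bigg\rvert + \frac{x (\log x)^{2}}{T},
\]
where $\{t_{j}\}$ runs over the spectral parameters of the weight-$\frac{1}{2}$ Maa{\ss} cusp forms transforming under $\nu_{\Theta}$. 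Bounding the spectral sum by $x^{\frac{1}{2}+\varepsilon} T^{\alpha}$ and balancing against $x/T$ yields the exponent $(2\alpha+1)/(2\alpha+2)$, so the target $\frac{3}{5}$ corresponds to $\alpha = \frac{1}{4}$ at the optimal choice $T = x^{\frac{2}{5}}$.

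To secure the required $\alpha = \frac{1}{4}$, I would invoke the vector-valued Shimura correspondence \cref{thm:Shimura-vector-valued} to identify each weight-$\frac{1}{2}$ Maa{\ss} eigenform $\theta_{j}$ with a weight-$0$ Hecke--Maa{\ss} cusp form $f_{j}$ on $\mathrm{SL}_{2}(\mathbb{Z})$, whose spectral parameter is determined by $t_{j}$ via the transition recorded there. The Katok--Sarnak / Baruch--Mao identity then relates the squared modulus of the first Fourier coefficient $|\rho_{\theta_{j}}(1)|^{2}$ to the harmonic-weighted central $L$-value $L(\tfrac{1}{2}, f_{j})/L(1, \mathrm{sym}^{2} f_{j})$. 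Applying Cauchy--Schwarz in tandem with the weight-$\frac{1}{2}$ Kuznetsov formula -- whose geometric side is built from Sali\'e sums that evaluate explicitly via quadratic Gauss sums -- reduces the estimation to the second-moment bound
\[
\sum_{|t_{f_{j}}| \asymp T} L(\tfrac{1}{2}, f_{j}) \ll T^{2+\varepsilon},
\]
which is a Lindel\"of-on-average estimate unconditionally accessible through an approximate functional equation combined with the spectral large sieve on $\mathrm{SL}_{2}(\mathbb{Z})$.

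Inserting this into the balance produces $\mathcal{E}_{\Gamma}^{(2)}(x, \nu_{\Theta}) \ll x^{\frac{3}{5}+\varepsilon}$, as claimed. The principal technical obstacle lies in the faithful transfer of the moment estimate through the vector-valued Shimura correspondence: the transition between spectral parameters must be tracked consistently in the truncation range, and the normalisation factors arising from $\nu_{\Theta}$ on $\mathrm{SL}_{2}(\mathbb{Z})$ -- as distinguished from the classical scalar theta multiplier on $\Gamma_{0}(4)$ -- must be dispatched without loss. Crucially, the vector-valued formulation sidesteps the auxiliary lift to $\Gamma_{0}(2)$ that complicates the scalar analogue, thereby delivering the cleaner exponent $\frac{3}{5}$; a further sharpening in the style of \cites{SoundararajanYoung2013} would necessitate a fourth-moment input and is, as stated, deferred.
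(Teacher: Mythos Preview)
Your numerical answer is right, but the argument has two compensating errors. First, the truncation error in the explicit formula for weight $k=\tfrac12$ is $x^{1-|k|/2+\varepsilon}/T = x^{3/4+\varepsilon}/T$, \emph{not} $x/T$; this reduction of the error term (reflecting that the main term is $\sim\tfrac43 x^{3/4}$ rather than $\sim x$) is the whole point of \cref{prop:spectral-explicit-formula} and its specialisation \cref{prop:explicit-formula}. Second, the spectral bound you assume, $\sum_{|t_j|\le T} x^{it_j}\ll T^{5/4+\varepsilon}$ uniformly in $x$ (equivalently your $\alpha=\tfrac14$), is well beyond reach: there are $\asymp T^2$ terms, and the Katok--Sarnak/$L$-value/second-moment route you sketch does not produce such an estimate for the exponential sum. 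Your choice $T=x^{2/5}$ also violates the admissible range $T\le x^{1/4}$.

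The paper's mechanism is different and simpler. By \cref{thm:Shimura-vector-valued} the weight-$\tfrac12$ spectral parameters satisfy $t_{\tilde f}=\tfrac12 t_f$ with $f$ ranging over weight-$0$ Maa{\ss} forms on $\mathrm{SL}_2(\mathbb{Z})$, so $x^{it_{\tilde f}}=(x^{1/2})^{it_f}$ and the spectral exponential sum is literally the weight-$0$ sum with $X\mapsto x^{1/2}$. Feeding in Luo--Sarnak's unconditional bound $\sum_{t_f\le T} X^{it_f}\ll T^{5/4}X^{1/8+\varepsilon}$ gives $T^{5/4}x^{1/16+\varepsilon}$ (\cref{prop:Luo-Sarnak-analogue}); after partial summation this contributes $T^{1/4}x^{9/16+\varepsilon}$, and balancing against the correct truncation error $x^{3/4+\varepsilon}/T$ at $T=x^{3/20}$ yields $x^{3/5+\varepsilon}$. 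No Kuznetsov formula, Sali\'e sums, or $L$-value moments enter; the Shimura correspondence is used only to halve the exponent of $x$ in the Luo--Sarnak bound.
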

 
Although endeavours towards the twisted prime geodesic theorem from a number-theoretic perspective are rare and sporadic, the prime geodesic theorem associated to the $3$-fold theta multiplier system holds particular significance, both because of its intrinsic attributes and as a signpost towards a general framework. Investigations in this direction can be traced~back~to Barner~\cites[Equation~(6.6)]{Barner1992}, who proposes a crude asymptotic formula
\begin{equation}\label{eq:Barner}
\Psi_{\Gamma}^{(2)}(x, \nu_{\Theta}) \sim \frac{4}{3} x^{\frac{3}{4}}.
\end{equation}
The bottom Laplace eigenvalue is $\frac{3}{16}$ instead of $0$, and \cref{thm:main} precludes the possibility of~\eqref{eq:Barner} degenerating into a mere upper bound. Numerical evidence~for~\eqref{eq:Barner} is provided by Barner~\cites[Section~6]{Barner1992} via an efficient algorithm for computing the pseudoprime counting function up to $x \in [2, 2250000]$. Shortly thereafter, Matthes~\cites[Main Theorem]{Matthes1994}[Theorem~1.1]{Matthes1996}\footnote{The result was initially presented in his Habilitationsschrift defended in 1992 at Gesamthochschule~Kassel, a condensed version of which is published as~\cites{Matthes1994}.} adapted the toolbox of Iwaniec~\cites{Iwaniec1984} to strengthen~\eqref{eq:Barner} with an error term. Such a reduction of the error term is heuristically foreseeable for a certain family of multiplier systems wherein the \textit{Shimura correspondence} is available in a highly explicit form.

\begingroup
\renewcommand{\arraystretch}{1.5}
\begin{table}[ht!]
\centering
\setlength\tabcolsep{10pt}
\caption{A Shimura correspondence between $\delta_{1}^{(2)}(\mathbbm{1})$ and $\delta_{1}^{(2)}(\nu_{\Theta})$}
\label{table:1}
\begin{tabular}{|c|c|c|c|}
	\hline
	Source & $\delta_{1}^{(2)}(\mathbbm{1})$ & $\delta_{1}^{(2)}(\nu_{\Theta})$ & $\delta_{1}^{(2)}(\mathbbm{1})-2(\delta_{1}^{(2)}(\nu_{\Theta})-\frac{1}{2})$ \\ \hline
	Trivial & $1$ & $\frac{3}{4}$ & $\frac{1}{2}$ \\ \hline
	\cites{Huber1961}{Matthes1994} & $\frac{3}{4}$ & $\frac{5}{8}$ & $\frac{1}{2}$ \\ \hline
	\cites{Iwaniec1984}{Matthes1996} & $\frac{35}{48}$ & $\frac{59}{96}$ & $\frac{1}{2}$\\ \hline
	\cites{LuoSarnak1995} \& \eqref{eq:nu-Theta} & $\frac{7}{10}$ & $\frac{3}{5}$ & $\frac{1}{2}$\\ \hline
\end{tabular}
\end{table}
\endgroup

To reveal the hidden structure, we assemble in \cref{table:1} several previous pointwise exponents for the prime geodesic theorems in weight $0$ and $\frac{1}{2}$. The correspondence is expected to persist for the second moment exponents. As straightforward evidence, it follows -- albeit with some additional technicalities -- from the treatment in~\cites[Section~4]{CherubiniGuerreiro2018} that
\begin{equation}
\delta_{2}^{(2)}(\nu_{\Theta}) \leq \frac{9}{16}, \qquad \eta_{2}^{(2)}(\nu_{\Theta}) \leq \frac{1}{4},
\end{equation}
thereby reproducing \cref{thm:pointwise} when substituted into \cref{thm:mean-to-max}.

Note that the specialisation to $\nu_{\Theta}$ serves to simplify our proofs; however, the methodologies used herein are applicable to a broader family of sufficiently well-behaved multiplier systems. For example, if one classifies the admissible multiplier systems in~\cites[Definition~1.1]{Sun2025},~then any quadratic twist of $\nu_{\Theta}$ or its conjugate becomes admissible. Hence, we are led to the prime geodesic theorem wherein the underlying multiplier system is induced by the theta multiplier on $\Gamma_{0}(4q)$ with $q \in \mathbb{N}$. Since neither the number of inequivalent cusps nor the lack of specific knowledge regarding admissible multiplier systems exerts any theoretical bearing on the core of our argument, one may derive the same result~\eqref{eq:nu-Theta} for such a family of multiplier systems.

\subsection{Metaplectic coverings}
For $\Gamma = \mathrm{PSL}_{2}(\mathbb{Z})$, the permissible values of $k$ in \cref{thm:main} are $0$ and $\pm \frac{1}{2}$. To accommodate the fractional weight $k = \frac{\cdot}{n}$, it is necessary to investigate the $n$-fold metaplectic covering. The full modular group itself is insufficient to define automorphic forms of fractional~weight since it introduces multi-valued automorphy that is not well-defined without a covering group. Consequently, it is natural to~extend our consideration to the case of $d = 3$ in~\eqref{eq:locally-symmetric-space} and to examine the corresponding prime geodesic theorem, with particular emphasis on the underlying arithmetic structure and its relationship with the trivial covering.

For $\nu = \mathbbm{1}$, the first effective asymptotic formula for $\Psi_{\Gamma}^{(3)}(x)$ is attributed to Sarnak~\cites[Theorem~5.1]{Sarnak1983} and Nakasuji~\cites[Theorem~1.3]{Nakasuji2000}[Theorem~4.2]{Nakasuji2001}, which demonstrates in particular that for any cofinite Kleinian group $\Gamma \subset \mathrm{PSL}_{2}(\mathbb{C})$,
\begin{equation}\label{eq:Sarnak}
\Psi_{\Gamma}^{(3)}(x) = \mathop{\sum \sum}_{\substack{f \in \mathcal{L}(\Gamma, \mathbbm{1}) \\ 1 < s_{f} \leq 2}} \frac{x^{s_{f}}}{s_{f}}+\mathcal{E}_{\Gamma}^{(3)}(x),
\end{equation}
where the inner summation is taken over the small eigenvalues $\lambda_{f} = s_{f}(2-s_{f}) \in [0, 1)$ of~the Laplacian on $\Gamma \backslash \mathbb{H}_{3}$, whereas $\mathcal{E}_{\Gamma}^{(3)}(x)$ serves as an error term satisfying
\begin{equation}\label{eq:5/3}
\mathcal{E}_{\Gamma}^{(3)}(x) \ll_{\Gamma, \varepsilon} x^{\frac{5}{3}+\varepsilon}.
\end{equation}
Any progress beyond the barrier~\eqref{eq:5/3} has proven elusive in full generality, while polynomial power-saving refinements exist in some restrictive arithmetic frameworks~\cites{BalogBiroCherubiniLaaksonen2022}{BalkanovaChatzakosCherubiniFrolenkovLaaksonen2019}{BalkanovaFrolenkov2020}{BalkanovaFrolenkov2022}{Kaneko2022-2}{Kaneko2025-2}{Koyama2001}{Qi2024}. The optimal exponent is conjectured to be~$1+\varepsilon$ as indicated by Nakasuji's $\Omega$-result~\cites[Theorem~1.2]{Nakasuji2000}[Theorem~1.1]{Nakasuji2001}, although~the supporting evidence is (even numerically) less abundant than in the $2$-dimensional scenario.

Let $\mathbb{H}_{3} \coloneqq \mathbb{C} \times \mathbb{R}_{+}^{\times}$ denote the quaternionic hyperbolic space. For $\omega \coloneqq e^{\frac{2\pi i}{3}}$, let $\mathbb{Q}(\omega)$ denote the Eisenstein quadratic field of class number $1$ with its ring of integers $\mathbb{Z}[\omega]$, discriminant~$-3$, and the unique ramified prime $\lambda \coloneqq \sqrt{-3} = 1+2\omega$. Let $\Gamma = \mathrm{PSL}_{2}(\mathbb{Z}[\omega])$, let $\Gamma_{1}(3)$ denote the principal congruence subgroup of level $3$, and let $\Gamma_{2} \coloneqq \langle \mathrm{PSL}_{2}(\mathbb{Z}), \Gamma_{1}(3) \rangle$. If $\gamma = \begin{psmallmatrix} a & b \\ c & d \end{psmallmatrix} \in \Gamma_{1}(3)$ and $(\frac{c}{\cdot})_{3}$ denotes the cubic symbol over $\mathbb{Z}[\omega]$, then the cubic Kubota character $\chi: \Gamma_{1}(3) \to \{1, \omega, \omega^{2} \}$ is defined by~\cites[Satz]{Kubota1966}
\begin{equation}\label{eq:Kubota-symbol}
\chi_{3}(\gamma) \coloneqq 
	\begin{dcases}
	\Big(\frac{c}{d} \Big)_{3} & \text{if $c \ne 0$},\\
	1 & \text{if $c = 0$},
	\end{dcases}
\end{equation}
which extends to a well-defined homomorphism via the law of cubic reciprocity. Reminiscent of the standard Eisenstein series with a simple pole at $s = 2$, the metaplectic Eisenstein series has a simple pole at $s = \frac{4}{3}$; see~\cites[Section~3]{Kubota1968} for a comprehensive account of metaplectic Eisenstein series in greater generality. Notably, if $\mathcal{L}(\Gamma, \chi, s)$ stands for the finite-dimensional Hilbert space of square-integrable automorphic forms under $\Gamma$ with character $\chi$ and spectral parameter $s$, then it corresponds to the \textit{cubic theta function} $\vartheta_{3} \in \mathcal{L}(\Gamma_{2}, \chi_{3}, \frac{4}{3})$, whose~Shimura correspondent is the constant eigenfunction $1 \in \mathcal{L}(\Gamma, \mathbbm{1}, 2)$.

As the main term in the prime geodesic theorem is dictated by the largest residual Laplace eigenvalue on the underlying orbifold, the \textit{metaplectic prime geodesic theorem} aims to identify the optimal rate of convergence in the asymptotic~formula
\begin{equation}\label{eq:metaplectic-PGT}
\Psi_{\Gamma_{2}}^{(3)}(x, \chi_{3}) = \frac{3}{4} x^{\frac{4}{3}}+\mathcal{E}_{\Gamma_{2}}^{(3)}(x, \chi_{3}).
\end{equation}
\begin{definition}\label{def:delta-k-nu-3}
Let $\chi_{3}$ denote the cubic Kubota character. Then we define $\delta_{1}^{(3)}(\chi_{3}) \in [1, \frac{5}{3}]$~so that
\begin{equation}
\mathcal{E}_{\Gamma_{2}}^{(3)}(x, \chi_{3}) \ll_{\varepsilon} x^{\delta_{1}^{(3)}(\chi_{3})+\varepsilon},
\end{equation}
while we define $\delta_{2}^{(3)}(\chi_{3}) \in [1, \frac{8}{5}]$ so that there exists a constant $\eta_{2}^{(3)}(\chi_{3}) \geq 0$ such that
\begin{equation}
\Big(\frac{1}{Y} \int_{X}^{X+Y} |\mathcal{E}_{\Gamma_{2}}^{(3)}(x, \chi_{3})|^{2} \, dx \Big)^{\frac{1}{2}} \ll_{\varepsilon} X^{\delta_{2}^{(3)}(\chi_{3})+\varepsilon} \Big(\frac{X}{Y} \Big)^{\eta_{2}^{(3)}(\chi_{3})}.
\end{equation}
\end{definition}

While the proofs are omitted to avoid superfluous methodological duplications, the~trivial bound in conjunction with \cref{thm:main,thm:CG} reads (cf.~\cref{prop:explicit-formula-metaplectic})
\begin{equation}\label{eq:barrier-metaplectic}
\delta_{1}^{(3)}(\chi_{3}) \leq \frac{11}{9}, \qquad \delta_{2}^{(3)}(\chi_{3}) \leq \frac{7}{5}, \qquad \eta_{2}^{(3)}(\chi_{3}) \leq \frac{1}{5}.
\end{equation}
The following result constitutes a refinement over~\eqref{eq:barrier-metaplectic}, contingent upon the unconditional resolution of the mean Lindel\"{o}f hypothesis over $\mathbb{Q}(\omega)$ akin to~\cites[Theorem~3.2]{Kaneko2025-2}. This, in principle, is the sole methodological justification for restricting our focus to the cubic~cover.
\begin{theorem}\label{thm:main-metaplectic}
Let $\chi_{3}$ denote the cubic Kubota character. Then unconditionally
\begin{equation}\label{eq:main-metaplectic-unconditional}
\delta_{1}^{(3)}(\chi_{3}) \leq \frac{25}{21} = 1.19047 \cdots.
\end{equation}
\end{theorem}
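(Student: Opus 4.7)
The proof will parallel the strategy underlying \cref{thm:main,thm:CG}, transported to the $3$-dimensional metaplectic setting on $\Gamma_{2} \backslash \mathbb{H}_{3}$. The target exponent $\frac{25}{21}$ emerges from balancing refined second-moment inputs against a $3$-dimensional analogue of the mean-to-max principle: with main-term exponent $\alpha = \frac{4}{3}$ dictated by the cubic theta residue, the identity
\begin{equation*}
\frac{\frac{8}{7}+\frac{4}{3} \cdot \frac{1}{3}}{1+\frac{1}{3}} = \frac{100/63}{4/3} = \frac{25}{21}
\end{equation*}
shows that it suffices to establish the refined moment bounds $\delta_{2}^{(3)}(\chi_{3}) \leq \frac{8}{7}$ and $\eta_{2}^{(3)}(\chi_{3}) \leq \frac{1}{3}$, and then to feed them through a mean-to-max transfer.

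The first step is to derive a metaplectic explicit formula for $\Psi_{\Gamma_{2}}^{(3)}(x,\chi_{3})$ via the vector-valued Selberg trace formula (\cref{thm:Selberg-trace-formula}), isolating the residual contribution $\frac{3}{4} x^{4/3}$ of the cubic theta function $\vartheta_{3} \in \mathcal{L}(\Gamma_{2}, \chi_{3}, \frac{4}{3})$ and expressing the remainder as a spectral sum over the small eigenvalues $s_{f} \in (\frac{4}{3}, 2)$ together with the generic Maa{\ss} spectrum on the unitary line. A metaplectic variant of the Brun--Titchmarsh-type estimate (\cref{prop:Brun-Titchmarsh}) would then govern short-interval fluctuations and facilitate the $3$-dimensional mean-to-max transition with $\alpha = \frac{4}{3}$ in place of $1-\frac{|k|}{2}$, by exactly the same argument as in \cref{thm:mean-to-max}.

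The heart of the proof is the sharpened second-moment estimate. I would plug the explicit formula into $\frac{1}{Y}\int_{X}^{X+Y} |\mathcal{E}_{\Gamma_{2}}^{(3)}(x,\chi_{3})|^{2} \, dx$ and apply the Kuznetsov trace formula on the cubic cover to convert the resulting spectral sum into a weighted sum over cuspidal data on $(\Gamma_{2}, \chi_{3})$. The Shimura correspondence developed in \cref{subsect:Shimura}, specialised to $\chi_{3}$, would then lift these cuspidal Maa{\ss} forms to Hecke Maa{\ss} forms on $\Gamma = \mathrm{PSL}_{2}(\mathbb{Z}[\omega])$, transforming the metaplectic second moment into a second moment of automorphic $L$-functions over the Eisenstein field $\mathbb{Q}(\omega)$. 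At this juncture one invokes the mean Lindel\"{o}f hypothesis over $\mathbb{Q}(\omega)$ in the style of~\cites[Theorem~3.2]{Kaneko2025-2}, and a routine optimisation of the spectral truncation parameter delivers the advertised moment bounds.

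The principal obstacle is the orchestration of the Kuznetsov-plus-Shimura-plus-mean-Lindel\"{o}f chain: one must track the Kubota character through the spectral decomposition, control the contribution of any exceptional eigenvalues in the window $(\frac{4}{3}, 2)$ so that they neither erode the saving nor trigger convergence issues, and verify that the arithmetic family produced by the Shimura lift matches precisely the class of Hecke characters on $\mathbb{Z}[\omega]$ for which mean Lindel\"{o}f is available. The rigidity of this last requirement -- known in sufficiently polished form only over $\mathbb{Q}(\omega)$ -- is exactly what confines the theorem to the cubic cover and prevents an immediate extension to $n$-fold coverings with $n \geq 4$.
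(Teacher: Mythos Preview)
Your route is genuinely different from the paper's, and it is both more complicated and incomplete as written.

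The paper does not go through a second moment at all. It establishes a metaplectic explicit formula (\cref{prop:explicit-formula-metaplectic}) expressing $\mathcal{E}_{\Gamma_{2}}^{(3)}(x,\chi_{3})$ as a truncated spectral exponential sum $\sum_{\pm}\sum_{0<t_{\tilde f}\le T} x^{1\pm it_{\tilde f}}/(1\pm it_{\tilde f})$ plus $O(x^{4/3+\varepsilon}/T)$, and then bounds the spectral sum \emph{pointwise} (\cref{prop:SES-metaplectic}). The key observation is that the Shimura correspondence $s_{f}-1=\pm 3(s_{\tilde f}-1)$ of \cref{thm:Shimura} converts $\sum_{t_{\tilde f}\le T} X^{it_{\tilde f}}$ into $\sum_{t_{f}\le 3T}(X^{1/3})^{it_{f}}$ on the trivial cover, where the bound $T^{7/4+\varepsilon}X^{1/12+\varepsilon}+T^{2}$ is already available from~\cites[Equation~(3-48)]{Kaneko2022-2} combined with the unconditional mean Lindel\"{o}f input~\cites[Theorem~3.2]{Kaneko2025-2}. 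Partial summation and the optimisation $T=x^{1/7}$ then give $25/21$ directly. No second moment, no mean-to-max transfer, no Kuznetsov formula on the cover.

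Your proposal instead aims for $\delta_{2}^{(3)}(\chi_{3})\le\frac{8}{7}$ and $\eta_{2}^{(3)}(\chi_{3})\le\frac{1}{3}$, then feeds these into a $3$-dimensional mean-to-max principle. The arithmetic of $\frac{8/7+(4/3)(1/3)}{1+1/3}=\frac{25}{21}$ is correct, but the second-moment bounds themselves are asserted without justification: the paper records only the trivial values $\delta_{2}^{(3)}(\chi_{3})\le\frac{7}{5}$, $\eta_{2}^{(3)}(\chi_{3})\le\frac{1}{5}$ in~\eqref{eq:barrier-metaplectic}, and your sketch of ``Kuznetsov on the cubic cover, lift via Shimura, apply mean Lindel\"{o}f, routine optimisation'' does not display the calculation that would produce exactly $\frac{8}{7}$ and $\frac{1}{3}$. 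That is the genuine gap. Even if the machinery could be made to work, you would be taking a long detour---building a second-moment estimate and a mean-to-max lemma in $3$ dimensions---to reach an exponent that falls out of a three-line pointwise argument once the spectral rescaling $X\mapsto X^{1/3}$ is recognised.
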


\begingroup
\renewcommand{\arraystretch}{1.5}
\begin{table}[ht!]
\centering
\setlength\tabcolsep{10pt}
\caption{A Shimura correspondence between $\delta_{1}^{(3)}(\mathbbm{1})$ and $\delta_{1}^{(3)}(\chi_{3})$}
\label{table:2}
\begin{tabular}{|c|c|c|c|}
	\hline
	Source & $\delta_{1}^{(3)}(\mathbbm{1})$ & $\delta_{1}^{(3)}(\chi_{3})$ & $\delta_{1}^{(3)}(\mathbbm{1})-3(\delta_{1}^{(3)}(\chi_{3})-1)$ \\ \hline
	Trivial & $2$ & $\frac{4}{3}$ & $1$ \\ \hline
	\cites{Sarnak1983} \& \eqref{eq:barrier-metaplectic} & $\frac{5}{3}$ & $\frac{11}{9}$ & $1$ \\ \hline
	\cites{Koyama2001} \& \eqref{eq:main-metaplectic-unconditional} & $\frac{11}{7}$ & $\frac{25}{21}$ & $1$ \\ \hline
\end{tabular}
\end{table}
\endgroup

Similarly, the generalised Shimura correspondence of Flicker~\cites[Theorem~5.3]{Flicker1980} provides a quantitative paring of $\delta_{1}^{(3)}(\mathbbm{1})$ and $\delta_{1}^{(3)}(\chi_{3})$, as summarised in \cref{table:2}. A reformulation in the classical language of the Shimura correspondence of Flicker ensures the transition between the spectral parameters attached to $f \in \mathcal{L}(\Gamma, \mathbbm{1})$ and $\tilde{f} \in \mathcal{L}(\Gamma_{2}, \chi_{3})$, namely $s_{f}-1 = \pm 3(s_{\tilde{f}}-1)$; cf. \cref{thm:Shimura}. It deserves further penetration to demonstrate as a possible straightforward generalisation that if $\chi_{n}$ denotes the $n$-fold Kubota character, then (cf.~\cites[Theorem~1.1]{Koyama2001})
\begin{equation}
\delta_{1}^{(3)}(\chi_{n}) \leq 1+\frac{4}{7n}.
\end{equation}
We refrain from undertaking such additional work.

\section{Scalar-valued theory}
This section compiles basic facts on automorphic forms of arbitrary real weight, which~are considered fundamental in the higher echelon; see~\cites{AhlgrenAndersen2018}{AndersenDuke2020}{DukeFriedlanderIwaniec2012}{Pribitkin2000}{Proskurin2003}{Sarnak1984}{Stromberg2008} for a comprehensive summary of the theoretical background. The overall presentation is structured to remain as self-contained as possible to accommodate a diverse readership.

\subsection{Multiplier systems}\label{subsect:multiplier-systems}
Let $k \in \mathbb{R}$. Given $\gamma = \begin{psmallmatrix} a & b \\ c & d \end{psmallmatrix} \in \mathrm{SL}_{2}(\mathbb{R})$ and $z \in \mathbb{H}_{2}$, we define
\begin{equation}\label{eq:i-j}
i(\gamma, z) \coloneqq cz+d, \qquad j(\gamma, z) \coloneqq \frac{cz+d}{|cz+d|} = e^{i \arg(cz+d)}.
\end{equation}
As the cocycle relation $i(\gamma_{1} \gamma_{2}, z) = i(\gamma_{1}, \gamma_{2} z) i(\gamma_{2}, z)$ holds for all $\gamma_{1}, \gamma_{2} \in \mathrm{SL}_{2}(\mathbb{R})$, the quantity
\begin{equation}
\tilde{\omega}(\gamma_{1}, \gamma_{2}) \coloneqq \frac{1}{2\pi}(\arg i(\gamma_{1}, \gamma_{2} z)+\arg i(\gamma_{2}, z)-\arg i(\gamma_{1} \gamma_{2}, z))
\end{equation}
is an integer independent of $z$. The factor system of weight $k$ is then defined by
\begin{equation}
\omega(\gamma_{1}, \gamma_{2}) \coloneqq e(k \, \tilde{\omega}(\gamma_{1}, \gamma_{2})) = j(\gamma_{2}, z)^{k} j(\gamma_{1}, \gamma_{2} z)^{k} j(\gamma_{1} \gamma_{2}, z)^{-k}, \qquad 
\gamma_{1}, \gamma_{2} \in \mathrm{SL}_{2}(\mathbb{R}).
\end{equation}
A multiplier system of weight $k$ and dimension $1$ on a cofinite Fuchsian group $\Gamma \subset \mathrm{SL}_{2}(\mathbb{R})$~is a map $\nu: \Gamma \to \mathbb{C}^{\times}$ satisfying the the conditions
\begin{itemize}
\item $|\nu(\gamma)| = 1$;
\item $\nu(-I) = e^{-k \pi i}$;
\item $\nu(\gamma_{1} \gamma_{2}) = \omega(\gamma_{1}, \gamma_{2}) \nu(\gamma_{1}) \nu(\gamma_{2})$ for all $\gamma_{1}, \gamma_{2} \in \Gamma$.
\end{itemize}

If $\nu$ is a multiplier system of weight $k$, then it is a multiplier system of weight $k^{\prime}$ for every $k^{\prime} \equiv k \tpmod{2}$, and its conjugate $\overline{\nu}$ is a multiplier system of weight $-k$. In anticipation of subsequent discussions, it is convenient to define the slash operator of weight $k$ by
\begin{equation}\label{eq:slash}
(f|_{k} \gamma)(z) \coloneqq j(\gamma, z)^{-k} f(\gamma z).
\end{equation}

\subsection{Kloosterman sums}\label{subsect:Kloosterman-sums}
Fix a multiplier system $\nu$ of weight $k$ on $\Gamma \subset \operatorname{SL}_{2}(\mathbb{R})$, which acts transitively on $\mathbb{P}^{1}(\mathbb{Q})$ via M\"{o}bius transformations. An element $\mathfrak{a} \in \mathbb{P}^{1}(\mathbb{Q})$ is called a cusp, and two cusps $\mathfrak{a}$ and $\mathfrak{b}$ are called equivalent under $\Gamma$ if there exists $\gamma \in \Gamma$ satisfying $\mathfrak{a} = \gamma \mathfrak{b}$. Let $\Gamma_{\mathfrak{a}} \coloneqq \langle \pm \gamma_{\mathfrak{a}} \rangle$ denote the stabiliser of the cusp $\mathfrak{a}$ in $\Gamma$. For example, $\Gamma_{\infty} = \{\pm \begin{psmallmatrix} 1 & b \\ 0 & 1 \end{psmallmatrix}: b \in \mathbb{Z} \}$. Let $\sigma_{\mathfrak{a}} \in \mathrm{SL}_{2}(\mathbb{R})$ denote the scaling matrix such that $\sigma_{\mathfrak{a}} \infty = \mathfrak{a}$ and $\sigma_{\mathfrak{a}}^{-1} \Gamma_{\mathfrak{a}} \sigma_{\mathfrak{a}} = \Gamma_{\infty}$, determined up to composition with a translation from the right side. For any cusp $\mathfrak{a}$, let $\kappa_{\mathfrak{a}} \in [0, 1)$~satisfy
\begin{equation}\label{eq:kappa}
\nu(\gamma_{\mathfrak{a}}) = e(\kappa_{\mathfrak{a}}).
\end{equation}
The cusp $\mathfrak{a}$ is called essential with respect to $\nu$ if the restriction of $\nu$ to $\Gamma_{\mathfrak{a}}$ is trivial, namely if $\kappa_{\mathfrak{a}} = 0$, and is called regular otherwise. For $\mathfrak{a} = \infty$, one may suppress the subscript unless it induces confusion, and write $\kappa \coloneqq \kappa_{\mathfrak{a}}$. For notational simplicity, we abbreviate $n_{\mathfrak{a}} \coloneqq n+\kappa_{\mathfrak{a}}$ for any $n \in \mathbb{Z}$. Let $\nu_{\mathfrak{ab}}$ denote the multiplier system for the conjugate group $\sigma_{\mathfrak{a}}^{-1} \Gamma \sigma_{\mathfrak{b}}$ given~by
\begin{equation}
\nu_{\mathfrak{ab}}(\gamma) \coloneqq \nu(\sigma_{\mathfrak{a}} \gamma \sigma_{\mathfrak{b}}^{-1}) \omega(\sigma_{\mathfrak{a}}^{-1}, \sigma_{\mathfrak{a}} \gamma \sigma_{\mathfrak{b}}^{-1}) \omega(\gamma \sigma_{\mathfrak{b}}^{-1}, \sigma_{\mathfrak{b}}).
\end{equation}
Now, if the set of allowed moduli is denoted by $\mathcal{C}(\mathfrak{a}, \mathfrak{b}) \coloneqq \{c > 0: 
\begin{psmallmatrix} \ast & \ast \\ c & \ast \end{psmallmatrix} \in \sigma_{\mathfrak{a}}^{-1} \Gamma \sigma_{\mathfrak{b}} \}$, then the Kloosterman sums attached to the pair of cusps $(\mathfrak{a}, \mathfrak{b})$ with respect to the multiplier system $\nu$ are defined by~\cites[Page~700]{Hejhal1983}
\begin{equation}\label{eq:Kloosterman}
S_{\mathfrak{ab}}(m, n, c, \nu) \coloneqq \sum_{\gamma = \begin{psmallmatrix} a & \ast \\ c & d \end{psmallmatrix} \in \Gamma_{\infty} \backslash \sigma_{\mathfrak{a}}^{-1} \Gamma \sigma_{\mathfrak{b}}/\Gamma_{\infty}} \overline{\nu_{\mathfrak{ab}}(\gamma)} e \Big(\frac{m_{\mathfrak{a}} a+n_{\mathfrak{b}} d}{c} \Big), \qquad c \in \mathcal{C}(\mathfrak{a}, \mathfrak{b}).
\end{equation}

\subsection{Laplace eigenfunctions}\label{subsect:Laplace-Beltrami}
A function $f: \mathbb{H}_{2} \to \mathbb{C}$ is said to be an automorphic form of weight $k$ and multiplier system $\nu$ on $\Gamma$ if it transforms for all $\gamma \in \Gamma$ as
\begin{equation}\label{eq:automorphy}
(f|_{k} \gamma)(z) = \nu(\gamma) f(z).
\end{equation}
Let $\mathcal{A}_{k}(\Gamma, \nu)$ denote the linear space of all such functions, and let $\mathcal{L}_{k}(\Gamma, \nu) \subset \mathcal{A}_{k}(\Gamma, \nu)$ denote the subspace consisting of square-integrable functions on the fundamental domain $\Gamma \backslash \mathbb{H}_{2}$ with respect to the $\mathrm{SL}_{2}(\mathbb{R})$-invariant measure $d\mu(z) \coloneqq y^{-2} \, dx \, dy$ and the Petersson inner product
\begin{equation}
\langle f, g \rangle \coloneqq \int_{\Gamma \backslash \mathbb{H}_{2}} f(z) \overline{g(z)} \, d\mu(z)
\end{equation}
for $f, g \in \mathcal{L}_{k}(\Gamma, \nu)$. For $k \in \mathbb{R}$, the hyperbolic Laplacian of weight $k$ is defined by
\begin{equation}\label{eq:Laplace-Beltrami}
\Delta_{k} \coloneqq y^{2} \Big(\frac{\partial^{2}}{\partial^{2} x}+\frac{\partial^{2}}{\partial^{2} y} \Big)-iky \frac{\partial}{\partial x}.
\end{equation}
It admits a unique self-adjoint extension to $\mathcal{L}_{k}(\Gamma, \nu)$, which we also denote by $\Delta_{k}$ by abuse~of notation. For each $k \in \mathbb{R}$, $\Delta_{k}$ commutes with the slash operator~\eqref{eq:slash} for all $\gamma \in \mathrm{SL}_{2}(\mathbb{R})$.

\subsection{Maa{\ss} cusp forms}
A real analytic function $f: \mathbb{H}_{2} \to \mathbb{C}$ is said to be an eigenfunction of $\Delta_{k}$ with Laplace eigenvalue $\lambda_{f} \in \mathbb{C}$ if
\begin{equation}
\Delta_{k} f = -\lambda_{f} f.
\end{equation}
An eigenfunction $f$ is said to be a Maa{\ss} form if $f \in \mathcal{A}_{k}(\Gamma, \nu)$ is smooth and obeys the growth condition
\begin{equation}
(f|_{k} \gamma)(z) \ll y^{\sigma}+y^{1-\sigma}
\end{equation}
for all $\gamma \in \Gamma$, $z \in \mathbb{H}_{2}$, and for some $\sigma$ dependent on $\gamma$. If a Maa{\ss} form $f$ obeys the additional cuspidality condition
\begin{equation}
\int_{0}^{1} (f|_{k} \sigma_{\mathfrak{a}})(z) e(\kappa_{\mathfrak{a}} x) \, dx = 0
\end{equation}
for every cusp $\mathfrak{a}$ of $\Gamma$, then $f \in \mathcal{L}_{k}(\Gamma, \nu)$, and $f$ is said to be a Maa{\ss} cusp form. 

Let $\mathcal{B}_{k}(\Gamma, \nu) \subset \mathcal{A}_{k}(\Gamma, \nu)$ denote the space of smooth functions $f$ such that $f$ and $\Delta_{k} f$ are both bounded. It follows that $\mathcal{B}_{k}(\Gamma, \nu) \subset \mathcal{L}_{k}(\Gamma, \nu)$, $\mathcal{B}_{k}(\Gamma, \nu)$ is dense in $\mathcal{L}_{k}(\Gamma, \nu)$, and $\Delta_{k}$ is a symmetric operator on $\mathcal{B}_{k}(\Gamma, \nu)$. If the bottom eigenvalue is denoted by $\lambda_{0} \coloneqq \frac{|k|}{2}(1-\frac{|k|}{2})$, then $\langle f, -\Delta_{k} f \rangle \geq \lambda_{0} \langle f, f \rangle$ for any $f \in \mathcal{L}_{k}(\Gamma, \nu)$. By a theorem of Friedrichs, $-\Delta_{k}$ admits a unique self-adjoint extension to $\mathcal{L}_{k}(\Gamma, \nu)$, and by a theorem of von Neumann, $\mathcal{L}_{k}(\Gamma, \nu)$ has a complete spectral resolution with respect to $-\Delta_{k}$. The spectrum consists of two~distinct components: the continuous spectrum in $[\frac{1}{4}, \infty)$ arising from Eisenstein series $E_{\mathfrak{a}}(z, s, \nu)$ for~each essential cusp $\mathfrak{a}$, and the discrete spectrum of finite multiplicity in $[\lambda_{0}, \infty)$. A portion of the discrete spectrum arises from residues of the Eisenstein series at possible simple poles in $(\frac{1}{2}, 1-\frac{|k|}{2}]$, and the remainder of the discrete spectrum arises from Maa{\ss} cusp forms.

For each Laplace eigenvalue $\lambda_{f}$, we employ the standard conventions
\begin{equation}\label{eq:convention-eigenvalues}
\lambda_{f} = \tfrac{1}{4}+t_{f}^{2} = s_{f}(1-s_{f}), \qquad s_{f} = \tfrac{1}{2}+it_{f}, \qquad t_{f} \in i(0, \tfrac{1-|k|}{2}] \cup [0, \infty).
\end{equation}
If $\mathcal{L}_{k}(\Gamma, \nu, s_{f}) \subset \mathcal{L}_{k}(\Gamma, \nu)$ denotes the subspace associated to the spectral parameter\footnote{The quantity $t_{f}$ may also be referred to as the spectral parameter of $f$ as the situation demands.} $s_{f}$, then complex conjugation yields an isometry $\mathcal{L}_{k}(\Gamma, \nu, s_{f}) \leftrightarrow \mathcal{L}^{\ast}_{-k}(\Gamma, \overline{\nu}, s_{f})$ between normed spaces. Each $f \in \mathcal{L}_{k}(\Gamma, \nu, s_{f})$ admits a Fourier--Whittaker expansion around $\mathfrak{a}$ of the shape
\begin{equation}\label{eq:Fourier-Whittaker}
j(\sigma_{\mathfrak{a}}, z)^{-k} f(\sigma_{\mathfrak{a}} z) = c_{0}(y)+\sum_{n_{\mathfrak{a}} \ne 0} \rho_{f \mathfrak{a}}(n) W_{\frac{k}{2} \, \mathrm{sgn}(n_{\mathfrak{a}}), it_{f}}(4\pi |n_{\mathfrak{a}}|y) e(n_{\mathfrak{a}} x),
\end{equation}
where $W_{\kappa, \mu}(\cdot)$ denotes the standard Whittaker function, and
\begin{equation}\label{eq:c-0}
c_{0}(y) \coloneqq 
	\begin{cases}
	0 & \text{if $\kappa_{\mathfrak{a}} \ne 0$},\\
	0 & \text{if $\kappa_{\mathfrak{a}} = 0$ and $t_{f} \geq 0$},\\
	\rho_{f \mathfrak{a}}(0) y^{\frac{1}{2}+it}+\rho_{f \mathfrak{a}}^{\prime}(0) y^{\frac{1}{2}-it} & \text{if $\kappa_{\mathfrak{a}} = 0$ and $t_{f} \in i(0, \tfrac{1-|k|}{2}]$}.
	\end{cases}
\end{equation}
In the third case, $\rho_{f \mathfrak{a}}(0) \ne 0$ if and only if $f$ arises from a residue of an Eisenstein series.

\subsection{Holomorphic cusp forms}\label{subsect:holomorphic}
Let $\mathcal{M}_{k}(\Gamma, \nu)$ stand for the space of holomorphic modular forms of weight $k$ and multiplier system $\nu$. If $f \in \mathcal{L}_{k}(\Gamma, \nu)$ has the bottom Laplace eigenvalue $\lambda_{0}$, then $f(z)$ lies in the kernel of $L_{k}$ when $k \geq 0$, whereas $\overline{f(z)}$ lies in the kernel of $L_{-k}$ when $k < 0$. Automorphy~\eqref{eq:automorphy} shows that the normalised function
\begin{equation}
F(z) \coloneqq 
	\begin{cases}
	y^{-\frac{k}{2}} f(z) & \text{if $k \geq 0$},\\
	y^{\frac{k}{2}} \overline{f(z)} & \text{if $k < 0$}
	\end{cases}
\end{equation}
lies in $\mathcal{M}_{k}(\Gamma, \nu)$ when $k \geq 0$ and lies in $\mathcal{M}_{-k}(\Gamma, \overline{\nu})$ when $k < 0$. Hence, there exists a bijective correspondence between all $f \in \mathcal{L}_{k}(\Gamma, \nu)$ with eigenvalue $\lambda_{0}$ and holomorphic modular forms $F$ of weight $k$, and $F$ is a cusp form if and only if $f$ is a Maa{\ss} cusp form.

\subsection{Eisenstein series}
For $k \in (-1, 1]$, fix an essential cusp $\mathfrak{a}$ with respect to the multiplier system $\nu$ of weight $k$ on $\Gamma$. The Eisenstein series associated to $\mathfrak{a}$ is defined by
\begin{equation}\label{eq:Eisenstein-def}
E_{\mathfrak{a}}(z, s, \nu) \coloneqq \sum_{\gamma \in \Gamma_{\mathfrak{a}} \backslash \Gamma} \overline{\nu(\gamma) \omega(\sigma_{\mathfrak{a}}^{-1}, \gamma)} \Im(\sigma_{\mathfrak{a}}^{-1} \gamma z)^{s} j(\sigma_{\mathfrak{a}}^{-1} \gamma, z)^{-k},
\end{equation}
which converges absolutely and uniformly on any compact subset of $\Gamma \backslash \mathbb{H}_{2}$ as long as $\Re(s) > 1$, extending meromorphically to $\mathbb{C}$ but not being square-integrable over $\Gamma \backslash \mathbb{H}_{2}$. For any cusp $\mathfrak{b}$ of $\Gamma$, it admits a Fourier--Whittaker expansion around $\mathfrak{b}$ of the shape
\begin{multline}\label{eq:Eisenstein}
j(\sigma_{\mathfrak{b}}, z)^{-k} E_{\mathfrak{a}}(\sigma_{\mathfrak{b}} z, s, \nu)
 = \delta_{\mathfrak{a} = \mathfrak{b}} \, y^{s}+\delta_{\kappa_{\mathfrak{b}} = 0} \, \rho_{\mathfrak{ab}}(0, s, \nu) y^{1-s}\\
 + \sum_{n_{\mathfrak{b}} \ne 0} \rho_{\mathfrak{ab}}(n, s, \nu) W_{\frac{k}{2} \, \mathrm{sgn}(n_{\mathfrak{b}}), s-\frac{1}{2}}(4\pi |n_{\mathfrak{b}}|y) e(n_{\mathfrak{b}} x),
\end{multline}
where
\begin{equation}
\rho_{\mathfrak{ab}}(n, s, \nu) \coloneqq 
	\begin{dcases}
	\frac{e(-\frac{k}{4}) \pi^{s} |n_{\mathfrak{b}}|^{s-1}}{\Gamma(s+\frac{k}{2} \, \mathrm{sgn}(n_{\mathfrak{b}}))} \sum_{c \in \mathcal{C}(\mathfrak{a}, \mathfrak{b})} \frac{S_{\mathfrak{ab}}(0, n, c, \nu)}{c^{2s}} & \text{if $n_{\mathfrak{b}} \ne 0$},\\
	\frac{e(-\frac{k}{4}) 4^{1-s} \pi \Gamma(2s-1)}{\Gamma(s+\frac{k}{2}) \Gamma(s-\frac{k}{2})} \sum_{c \in \mathcal{C}(\mathfrak{a}, \mathfrak{b})} \frac{S_{\mathfrak{ab}}(0, 0, c, \nu)}{c^{2s}} & \text{if $n_{\mathfrak{b}} = 0$}.
	\end{dcases}
\end{equation}
The Fourier--Whittaker coefficients $\rho_{\mathfrak{ab}}(n, s, \nu)$ are continued meromorphically to $\mathbb{C}$ and well-defined on $\Re(s) = \frac{1}{2}$. The entries of the scattering matrix are indexed by pairs of essential cusps, namely $\Phi(s, \nu) \coloneqq (\rho_{\mathfrak{a} \mathfrak{b}}(0, s, \nu))_{\mathfrak{a}, \mathfrak{b}}$. If $h$ stands for the number of inequivalent essential cusps of $\Gamma$, then the $h \times 1$ vector $\mathcal{E}(z, s, \nu)$ of all Eisenstein series at these cusps satisfies the functional equation
\begin{equation}
\mathcal{E}(z, 1-s, \nu) = \Phi(s, \nu) \mathcal{E}(z, s, \nu),
\end{equation}
from which it follows that
\begin{equation}\label{eq:s-1-s}
\Phi(s, \nu) \Phi(1-s, \nu) = I_{h}.
\end{equation}
The scattering determinant $\varphi \coloneqq \det \Phi$ possesses a Dirichlet series expansion of the shape
\begin{equation}
\varphi(s, \nu) \coloneqq \Big(\frac{\sqrt{\pi} 4^{1-s} \Gamma(2s-1)}{\Gamma(s+\frac{k}{2}) \Gamma(s-\frac{k}{2})} \Big)^{h} \sum_{n = 1}^{\infty} \frac{a_{n}}{b_{n}^{2s}},
\end{equation}
where $(b_{n})_{n \in \mathbb{N}}$ is a strictly increasing sequence of positive coefficients, and the series converges absolutely for $\Re(s) > 1$ and is holomorphic for $\Re(s) \geq \frac{1}{2}$ except possibly for a finite number of poles corresponding to the residual spectrum of $\Delta_{k}$.

\section{Vector-valued theory}\label{sect:vector-valued-theory}
Given a vector or a matrix $M$, let $M^{\mathrm{T}}$ denote its transpose, and let $M^{\mathrm{H}}$ denote its conjugate transpose. Let $\vec{u}$ and $\vec{F}(z)$ denote a complex vector and a vector-valued function of dimension $D \geq 1$, respectively. Given $1 \leq \ell \leq D$, let $\mathfrak{e}_{\ell} \coloneqq (0, \ldots, 0, 1, 0, \ldots, 0)^{\mathrm{T}}$ denote the unit vector having $1$ at its $\ell$-th entry and $0$ otherwise. When the superscript $(\ell)$ is present, we write
\begin{equation}
\vec{u} \coloneqq \sum_{\ell = 1}^{D} \vec{u}^{\, (\ell)} \coloneqq (u^{(1)}, u^{(2)}, \ldots, u^{(D)})^{\mathrm{T}}, \qquad \vec{F}(z) \coloneqq \sum_{\ell = 1}^{D} \vec{F}^{(\ell)}(z) \coloneqq \sum_{\ell = 1}^{D} F^{(\ell)}(z) \mathfrak{e}_{\ell}.
\end{equation}

\subsection{Vector-valued multiplier systems}
Following~\cites{KnoppMason2003}{KnoppMason2004}, we are now prepared to define vector-valued multiplier systems on any cofinite Fuchsian group $\Gamma \subset \mathrm{SL}_{2}(\mathbb{R})$, where $\begin{psmallmatrix} 1 & 1 \\ 0 & 1 \end{psmallmatrix} \in \Gamma$ is assumed without loss of generality. It is convenient to fix the principal argument in $(-\pi, \pi]$ and employ the factor of automorphy $j(\gamma, z)$ as in~\eqref{eq:i-j}. Given $\gamma = \begin{psmallmatrix} a & b \\ c & d \end{psmallmatrix} \in \mathrm{SL}_{2}(\mathbb{R})$ and $z \in \mathbb{H}_{2}$, we define the vector-valued slash operator of weight $k$ by
\begin{equation}
(\vec{F}|_{k} \gamma)(z) = ((F^{(1)}|_{k} \gamma)(z), (F^{(2)}|_{k} \gamma)(z), \ldots, (F^{(D)}|_{k} \gamma)(z))^{\mathrm{T}} \coloneqq j(\gamma, z)^{-k} \vec{F}(\gamma z).
\end{equation}
A multiplier system of weight $k$ and dimension $D$ on a cofinite Fuchsian group $\Gamma \subset \mathrm{SL}_{2}(\mathbb{R})$~is a map $\xi: \Gamma \to \mathrm{GL}_{D}(\mathbb{C})$ satisfying the conditions
\begin{itemize}
\item $\xi(\gamma)$ is unitary for all $\gamma \in \Gamma$, namely $\xi(\gamma)^{-1} = \xi(\gamma)^{\mathrm{H}}$;
\item $\xi(-I) = e^{-k\pi i} I_{D}$, where $I_{D} \in \mathrm{GL}_{D}(\mathbb{C})$ denotes the identity matrix;
\item $\xi(\gamma_{1} \gamma_{2}) = \omega(\gamma_{1}, \gamma_{2}) \xi(\gamma_{1}) \xi(\gamma_{2})$ for all $\gamma_{1}, \gamma_{2} \in \Gamma$.
\end{itemize}

\subsection{Vector-valued Kloosterman sums}
A multiplier system of weight $k$ and dimension $D$ on $\Gamma \subset \operatorname{SL}_{2}(\mathbb{R})$ acts transitively on $\mathbb{P}^{1}(\mathbb{Q})$ by M\"{o}bius transformations. In conjunction~with the scalar-valued setting, for every cusp $\mathfrak{a}$ of $\Gamma$, we define the quantity $\kappa_{\mathfrak{a}}^{(\ell)} \in [0,1)$ by
\begin{equation}\label{eq:kappa-2}
\xi(\gamma_{\mathfrak{a}}) = \mathrm{diag} \{e(\kappa_{\mathfrak{a}}^{(1)}), \ldots, e(\kappa_{\mathfrak{a}}^{(D)}) \}.
\end{equation}
The cusp $\mathfrak{a}$ is called essential with respect to $\xi$ if the restriction of $\xi$ to $\Gamma_{\mathfrak{a}}$ is trivial, namely if $\kappa_{\mathfrak{a}}^{(\ell)} = 0$ for every $1 \leq \ell \leq D$, and is called regular otherwise. One may write $n_{\mathfrak{a}}^{(\ell)} \coloneqq n+\kappa_{\mathfrak{a}}^{(\ell)}$ for any $n \in \mathbb{Z}$. Let $\xi_{\mathfrak{ab}}$ denote the multiplier system for the conjugate group $\sigma_{\mathfrak{a}}^{-1} \Gamma \sigma_{\mathfrak{b}}$ given~by
\begin{equation}
\xi_{\mathfrak{ab}}(\gamma) \coloneqq \xi(\sigma_{\mathfrak{a}} \gamma \sigma_{\mathfrak{b}}^{-1}) \omega(\sigma_{\mathfrak{a}}^{-1}, \sigma_{\mathfrak{a}} \gamma \sigma_{\mathfrak{b}}^{-1}) \omega(\gamma \sigma_{\mathfrak{b}}^{-1}, \sigma_{\mathfrak{b}}).
\end{equation}
Furthermore, if $\xi(\gamma_{\mathfrak{a}})$ has eigenvalues $e(\kappa_{\mathfrak{a}}^{(1)}), \ldots, e(\kappa_{\mathfrak{a}}^{(D)})$, then the corresponding orthonormal set of eigenvectors is denoted by $\{\vec{\mathfrak{f}}_{\mathfrak{a}}^{\, (\ell)} \}$. For $c \in \mathcal{C}(\mathfrak{a}, \mathfrak{b})$, the vector-valued Kloosterman sums attached to the pair of cusps $(\mathfrak{a}, \mathfrak{b})$ with respect to $\xi$ are defined by~\cites[Equation~(3)]{Matthes1994-2}\footnote{There is a misprint in the numbering of~\cites[Equation~(3)]{Matthes1994-2}, which is incorrectly written as (13).}
\begin{equation}
S_{\mathfrak{ab}}^{(\ell)}(m, n, c, \xi) \coloneqq \sum_{\gamma = \begin{psmallmatrix} a & \ast \\ c & d \end{psmallmatrix} \in \Gamma_{\infty} \backslash \sigma_{\mathfrak{a}}^{-1} \Gamma \sigma_{\mathfrak{b}}/\Gamma_{\infty}} (\vec{\mathfrak{f}}_{\mathfrak{a}}^{\, (\ell)})^{\mathrm{H}} \, \overline{\xi_{\mathfrak{ab}}(\gamma)} \, \vec{\mathfrak{f}}_{\mathfrak{b}}^{\, (\ell)} e \Big(\frac{m_{\mathfrak{a}}^{(\ell)} a+n_{\mathfrak{b}}^{(\ell)} d}{c} \Big).
\end{equation}
If $\xi$ is induced by a $1$-dimensional multiplier system, then it is known~\cites[Equation~(5)]{Matthes1994-2} that the vector-valued Kloosterman sums admit more explicit realisations.

\subsection{Vector-valued automorphic forms}
A function $\vec{F}: \mathbb{H}_{2} \to \mathbb{C}^{D}$ is said to be a vector-valued automorphic form of weight $k$ and multiplier system $\xi$ of dimension $D$ on $\Gamma$ if it transforms for all $\gamma \in \Gamma$ as
\begin{equation}
(\vec{F}|_{k} \gamma)(z) = \xi(\gamma) \vec{F}(z).
\end{equation}
Let $\mathcal{A}_{k}(\Gamma, \xi)$ denote the linear space of all such automorphic forms. If $\vec{F}, \vec{G} \in \mathcal{A}_{k}(\Gamma, \xi)$, then their Petersson inner product is defined formally by
\begin{equation}\label{eq:Petersson}
\langle \vec{F}, \vec{G} \rangle \coloneqq \int_{\Gamma \backslash \mathbb{H}_{2}} \sum_{\ell = 1}^{D} F^{(\ell)}(z) \overline{G^{(\ell)}(z)} \, d\mu(z) = \int_{\Gamma \backslash \mathbb{H}_{2}} \vec{G}^{\, \mathrm{H}}(z) \vec{F}(z) \, d\mu(z).
\end{equation}
Let $\mathcal{L}_{k}(\Gamma, \xi)\subset \mathcal{A}_{k}(\Gamma, \xi)$ denote the subspace consisting of square-integrable functions on the fundamental domain $\Gamma \backslash \mathbb{H}_{2}$ with respect to the Petersson inner product~\eqref{eq:Petersson}.

A real analytic function $\vec{F}: \mathbb{H}_{2} \to \mathbb{C}^{D}$ such that each component $F^{(\ell)}$ is smooth is said to be a $D$-dimensional eigenfunction of $\Delta_{k}$ with Laplace eigenvalue $\lambda_{F} \in \mathbb{C}$ if
\begin{equation}
\Delta_{k} \vec{F}(z) = -\lambda_{F} \vec{F}(z).
\end{equation}
An eigenfunction $\vec{F} = (F^{(1)}, \ldots, F^{(D)})$ is said to be a vector-valued Maa{\ss} form of weight~$k$~and dimension $D$ if $F^{(\ell)} \in \mathcal{A}_{k}(\Gamma, \xi)$ is smooth and it obeys the growth condition
\begin{equation}
(F^{(\ell)}|_{k} \gamma)(z) \ll y^{\sigma}+y^{1-\sigma}
\end{equation}
for all $\gamma \in \Gamma$, $z \in \mathbb{H}_{2}$, and for some $\sigma$ dependent on $\gamma$. If a vector-valued Maa{\ss} form $\vec{F}$ obeys the additional cuspidality condition
\begin{equation}
\int_{0}^{1} (F^{(\ell)}|_{k} \sigma_{\mathfrak{a}})(z) e(\kappa_{\mathfrak{a}}^{(\ell)} x) \, dx = 0
\end{equation}
for every cusp $\mathfrak{a}$ of $\Gamma$ and $1 \leq \ell \leq D$, then $\vec{F} \in \mathcal{L}_{k}(\Gamma, \xi)$, and $\vec{F}$ is said to be a vector-valued Maa{\ss} cusp form of weight $k$ and dimension $D$. In conjunction with the scalar-valued setting, each $F^{(\ell)} \in \mathcal{L}_{k}(\Gamma, \xi, s_{F})$ admits a Fourier--Whittaker expansion around $\mathfrak{a}$ of the shape
\begin{align}
\begin{split}
j(\sigma_{\mathfrak{a}}, z)^{-k} F^{(\ell)}(\sigma_{\mathfrak{a}} z) = c_{0}^{(\ell)}(y)+\sum_{n_{\mathfrak{a}}^{(\ell)} \neq 0} \rho_{F \mathfrak{a}}^{(\ell)}(n) W_{\frac{k}{2} \, \mathrm{sgn}(n_{\mathfrak{a}}^{(\ell)}), it_{F}}(4\pi |n_{\mathfrak{a}}^{(\ell)}|y) e(n_{\mathfrak{a}}^{(\ell)} x),
\end{split}
\end{align}
where
\begin{equation}
c_{0}^{(\ell)}(y) \coloneqq 
	\begin{cases}
	0 & \text{if $\kappa_{\mathfrak{a}}^{(\ell)} \ne 0$},\\
	0 & \text{if $\kappa_{\mathfrak{a}}^{(\ell)} = 0$ and $t_{F} \geq 0$},\\
	\rho_{F \mathfrak{a}}^{(\ell)}(0) y^{\frac{1}{2}+it}+(\rho_{F \mathfrak{a}}^{(\ell)})^{\prime}(0) y^{\frac{1}{2}-it} & \text{if $\kappa_{\mathfrak{a}}^{(\ell)} = 0$ and $t_{F} \in i(0, \tfrac{1-|k|}{2}]$}.
	\end{cases}
\end{equation}

We shall refrain from explicating the companion vector-valued theory of holomorphic cusp forms to avoid undue redundancy with the scalar-valued theory; cf.~\cites[Section~4.4]{Sun2024-2}.

For $k \in (-1, 1]$, let $\mathfrak{a}$ be an essential cusp with respect to the multiplier system $\nu$ of weight $k$ on $\Gamma$. The vector-valued Eisenstein series associated to $\mathfrak{a}$ is defined by
\begin{equation}
\vec{E}_{\mathfrak{a}}(z, s, \nu) \coloneqq \sum_{\gamma \in \Gamma_{\mathfrak{a}} \backslash \Gamma} \overline{\xi(\gamma) \omega(\sigma_{\mathfrak{a}}^{-1}, \gamma)} \Im(\sigma_{\mathfrak{a}}^{-1} \gamma z)^{s} j(\sigma_{\mathfrak{a}}^{-1} \gamma, z)^{-k},
\end{equation}
which converges absolutely and uniformly on any compact subset of $\Gamma \backslash \mathbb{H}_{2}$ as long as $\Re(s) > 1$, extending meromorphically to $\mathbb{C}$ but not being square-integrable over $\Gamma \backslash \mathbb{H}_{2}$. For any cusp $\mathfrak{b}$ of $\Gamma$, it admits a Fourier--Whittaker expansion around $\mathfrak{b}$ of the shape
\begin{multline}
j(\sigma_{\mathfrak{b}}, z)^{-k} E_{\mathfrak{a}}^{(\ell)}(\sigma_{\mathfrak{b}} z, s, \nu)
 = \delta_{\mathfrak{a} = \mathfrak{b}} \, y^{s}+\delta_{\kappa_{\mathfrak{b}}^{(\ell)} = 0} \, \rho_{\mathfrak{ab}}^{(\ell)}(0, s, \nu) y^{1-s}\\
 + \sum_{n_{\mathfrak{b}} \ne 0} \rho_{\mathfrak{ab}}^{(\ell)}(n, s, \nu) W_{\frac{k}{2} \, \mathrm{sgn}(n_{\mathfrak{b}}^{(\ell)}), s-\frac{1}{2}}(4\pi |n_{\mathfrak{b}}^{(\ell)}|y) e(n_{\mathfrak{b}}^{(\ell)} x),
\end{multline}
where
\begin{equation}
\rho_{\mathfrak{ab}}^{(\ell)}(n, s, \nu) \coloneqq 
	\begin{dcases}
	\frac{e(-\frac{k}{4}) \pi^{s} |n_{\mathfrak{b}}^{(\ell)}|^{s-1}}{\Gamma(s+\frac{k}{2} \, \mathrm{sgn}(n_{\mathfrak{b}}^{(\ell)}))} \sum_{c \in \mathcal{C}(\mathfrak{a}, \mathfrak{b})} \frac{S_{\mathfrak{ab}}^{(\ell)}(0, n, c, \nu)}{c^{2s}} & \text{if $n_{\mathfrak{b}}^{(\ell)} \ne 0$},\\
	\frac{e(-\frac{k}{4}) 4^{1-s} \pi \Gamma(2s-1)}{\Gamma(s+\frac{k}{2}) \Gamma(s-\frac{k}{2})} \sum_{c \in \mathcal{C}(\mathfrak{a}, \mathfrak{b})} \frac{S_{\mathfrak{ab}}^{(\ell)}(0, 0, c, \nu)}{c^{2s}} & \text{if $n_{\mathfrak{b}}^{(\ell)} = 0$}.
	\end{dcases}
\end{equation}
The Fourier--Whittaker coefficients $\rho_{\mathfrak{ab}}^{(\ell)}(n, s, \nu)$ may be continued meromorphically to $\mathbb{C}$ and are well-defined on the unitary line $\Re(s) = \frac{1}{2}$.

\subsection{The vector-valued Selberg trace formula}\label{sect:Selberg-trace-formula}
Let $h: \mathbb{C} \to \mathbb{C}$ be a holomorphic even function on $\{z \in \mathbb{C}: |\Im(z)| < \frac{1}{2}+\delta \}$ for some $\delta > 0$ such that $h(t) \ll (1+|t|)^{-2-\delta}$, and let
\begin{equation}
g(t) \coloneqq \frac{1}{2\pi} \int_{-\infty}^{\infty} h(u) e^{-itu} \, du.
\end{equation}
The Selberg trace formula in our case reads as follows.
\begin{theorem}[{Cf.~\cites[Theorem~4.1.1]{Fischer1987}[Theorem~6.3]{Hejhal1983}}]\label{thm:Selberg-trace-formula}
Keep the notation as~above. Then
\begin{align}
\sum_{f \in \mathcal{L}_{k}(\Gamma, \nu)} h(t_{f}) &= \frac{\mathrm{vol}(\Gamma \backslash \mathbb{H}_{2}) \dim \nu}{4\pi} \int_{-\infty}^{\infty} t h(t) \frac{\sinh(2\pi t)}{\cosh(2\pi t)+\cos(2\pi k)} \, dt\\
& + \frac{\mathrm{vol}(\Gamma \backslash \mathbb{H}_{2}) \dim \nu}{2\pi} \sum_{\ell = 0}^{\lfloor |k|-\frac{1}{2} \rfloor} \Big(|k|-\ell-\frac{1}{2} \Big) h \Big(i \Big(|k|-\ell-\frac{1}{2} \Big) \Big),\\
& + \sum_{\mathrm{tr}(\gamma) > 2} \frac{\mathrm{tr}(\nu(\gamma)) g(\log \mathrm{N}(\gamma))}{2 \sinh(\frac{\log \mathrm{N}(\gamma)}{2})} \Lambda_{\Gamma}(\gamma)\\
& + \sum_{\mathrm{tr}(\gamma) < 2} \sum_{0 < \theta_{\gamma} < \pi} \frac{\mathrm{tr}(\nu(\gamma)) ie^{i(2k-1)\theta_{\gamma}}}{2|\gamma| \sin \theta_{\gamma}} \int_{-\infty}^{\infty} g(u) e^{(k-\frac{1}{2})u} \frac{e^{u}-e^{2i\theta_{\gamma}}}{\cosh u-\cos 2\theta_{\gamma}} \, du,\\
& - g(0) \Big(h_{0} \dim \nu \log 2+\sum_{\ell = 1}^{D} \sum_{\kappa_{\mathfrak{a}}^{(\ell)} \ne 0} \log|1-e(\kappa_{\mathfrak{a}})| \Big)\\
& + \frac{1}{2} \sum_{\ell = 1}^{D} \sum_{\kappa_{\mathfrak{a}}^{(\ell)} \ne 0} \Big(\frac{1}{2}-\kappa_{\mathfrak{a}}^{(\ell)} \Big) \int_{0}^{\infty} g(u) \frac{\sinh(ku)}{\sinh(\frac{u}{2})} \, du\label{eq:Cauchy}\\
& + \frac{h(0)}{4} \mathrm{tr} \Big(I_{h}-\Phi \Big(\frac{1}{2}, \nu \Big) \Big)+\frac{h}{2} \int_{0}^{\infty} g(u) \frac{1-e^{-ku}}{\sinh(\frac{u}{2})} \, du\\
& + \frac{h}{2\pi} \int_{-\infty}^{\infty} h(t) \frac{\Gamma^{\prime}(1+it)}{\Gamma(1+it)} \, dt+\frac{1}{4\pi} \int_{-\infty}^{\infty} h(t) \frac{\varphi^{\prime}}{\varphi} \Big(\frac{1}{2}+it, \nu \Big) \, dt,\label{eq:Selberg-trace-formula}
\end{align}
where $|\gamma|$ denotes the order of the elliptic element $\gamma$, $\theta_{\gamma}$ denotes the angle in $(0, 2\pi)$ for which $\big(\begin{smallmatrix} \cos \theta_{\gamma} & -\sin \theta_{\gamma} \\ \sin \theta_{\gamma} & \cos \theta_{\gamma} \end{smallmatrix} \big)$ is a $\mathrm{SL}_{2}(\mathbb{R})$-conjugate of $\gamma$, and $h_{0}$ denotes the total number of cusps of $\Gamma$.
\end{theorem}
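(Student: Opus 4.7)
The proof follows the classical blueprint of Selberg, adapted to vector-valued multiplier systems as in Fischer and Hejhal. The plan is to build an automorphic kernel from the test function $h$, compute its trace in two independent ways, and identify the resulting identity term by term.

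First I would define the point-pair invariant $k(z,w)$ attached to $h$ via the Harish-Chandra/Selberg transform (the inverse transform of $g$), twisted appropriately by the weight-$k$ cocycle $j(\gamma,z)^{-k}$ so that the automorphic kernel
\begin{equation*}
K(z,w) \coloneqq \sum_{\gamma \in \Gamma} \mathrm{tr}(\nu(\gamma)) \, j(\gamma,z)^{-k} k(\gamma z, w)
\end{equation*}
transforms correctly under $\Gamma$ on both variables with respect to $\xi$. The assumed decay $h(t) \ll (1+|t|)^{-2-\delta}$ in a strip guarantees the absolute convergence needed to freely interchange summation with the spectral expansion. On the cocompact/cuspidal subspace $\mathcal{L}_{k}(\Gamma,\nu)$, Mercer's theorem gives the trace as $\sum_{f} h(t_f)$; the residual piece is folded into this sum by the convention~\eqref{eq:convention-eigenvalues}. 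The continuous part of $L^{2}$ is handled by inserting the Eisenstein expansion, whose trace, after an application of Maa{\ss}--Selberg relations, produces the scattering contribution $\frac{h(0)}{4}\mathrm{tr}(I_h - \Phi(\tfrac{1}{2},\nu))$ together with the $\varphi^{\prime}/\varphi$ integral on the last line of~\eqref{eq:Selberg-trace-formula}.

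Next I would decompose the geometric side by $\Gamma$-conjugacy classes. The identity contribution unfolds against the fundamental domain to give the area term in the first line, with the spectral density $t \sinh(2\pi t)/(\cosh(2\pi t)+\cos(2\pi k))$ arising from the Plancherel measure for $\mathrm{SL}_{2}(\mathbb{R})$ in weight $k$; the discrete points $i(|k|-\ell-\tfrac{1}{2})$ appearing in the second line come from the holomorphic cusp form contribution at the edge of the unitary spectrum, as explained in \cref{subsect:holomorphic}. For each primitive hyperbolic $\gamma_0$ with conjugates indexed by powers, the centraliser unfolds to give the length-spectrum series with character $\mathrm{tr}(\nu(\gamma))$ and weight $g(\log \mathrm{N}(\gamma))/(2\sinh(\tfrac{1}{2}\log \mathrm{N}(\gamma)))$, using the standard change of variable between the geodesic flow and the Iwasawa decomposition. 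The elliptic contribution follows from the $\mathrm{SO}(2)$-spherical transform applied to conjugates with rotation angle $\theta_{\gamma}$; here the factor $e^{i(2k-1)\theta_{\gamma}}$ records the weight-$k$ twist and is the only place where holomorphy of $h$ in a strip is used nontrivially.

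The main obstacle lies in the parabolic classes and the interaction with the cusps, which produces lines~\eqref{eq:Cauchy}--\eqref{eq:Selberg-trace-formula}. For each cusp $\mathfrak{a}$ and each diagonal eigenvector $\vec{\mathfrak{f}}_{\mathfrak{a}}^{\,(\ell)}$, the parabolic sum splits according to whether $\kappa_{\mathfrak{a}}^{(\ell)}=0$ or not, reflecting the dichotomy between essential and regular cusps from~\eqref{eq:kappa-2}. The essential case contributes the $\log 2$ term and the $\Gamma^{\prime}/\Gamma$ integral via Kronecker's limit formula applied to the constant term of each Eisenstein series, while the regular case contributes the $\log|1-e(\kappa_{\mathfrak{a}})|$ summand together with the Cauchy principal value in~\eqref{eq:Cauchy}, which arises from the non-vanishing of $\xi(\gamma_{\mathfrak{a}})-I$. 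I expect the delicate step to be the justification of the regularisation: the parabolic series and the Eisenstein trace are each only conditionally convergent near $s=\tfrac{1}{2}$, and one must truncate the fundamental domain at height $Y$, take $Y\to\infty$, and verify that the divergent pieces cancel term by term across the two sides, leaving only the identity stated. Once this is checked diagonally in $\ell$, summing over $1\le \ell \le D$ converts each scalar contribution into its $\mathrm{tr}(\nu(\gamma))$-weighted counterpart, completing the vector-valued formula.
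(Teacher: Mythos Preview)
The paper does not actually prove this theorem: it is stated with a ``Cf.'' citation to Fischer~\cite{Fischer1987} and Hejhal~\cite{Hejhal1983} and then used as a black box, so there is no proof in the paper to compare against. Your outline is a faithful high-level sketch of the classical Selberg argument as carried out in those references---automorphic kernel, spectral expansion via Mercer and Maa{\ss}--Selberg, geometric decomposition by conjugacy class, and the truncation/regularisation dance at the cusps---and nothing in it is wrong as a roadmap. If you were asked to supply a proof where the paper gives none, this is the right approach; just be aware that the actual execution (especially the parabolic regularisation and the precise bookkeeping of the weight-$k$ Plancherel density and the vector-valued cusp parameters $\kappa_{\mathfrak{a}}^{(\ell)}$) occupies dozens of pages in Hejhal and is not something one can complete from this sketch alone.
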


\subsection{Weyl's law over unit windows}\label{subsect:Weyl}
We are now prepared to formulate Weyl's law,~which provides a unified control over the distribution of the discrete and continuous spectra in an expanding window, despite the availability of a more refined approximation in the literature.
\begin{lemma}[{Cf.~\cites[Theorem~2.28]{Hejhal1983}[Theorem~7.3]{Venkov1990}}]\label{lem:Weyl-general}
Let $\Gamma \subset \mathrm{PSL}_{2}(\mathbb{R})$, and let $\nu: \Gamma \to \mathrm{GL}_{D}(\mathbb{C})$ be a multiplier system of weight $k$ and dimension $D$. Then
\begin{multline}
\#\{f \in \mathcal{L}_{k}(\Gamma, \nu): 0 < t_{f} \leq T \}-\frac{1}{4\pi} \int_{-T}^{T} \frac{\varphi^{\prime}}{\varphi} \Big(\frac{1}{2}+it, \nu \Big) \, dt\\
 = \frac{\mathrm{vol}(\Gamma \backslash \mathbb{H}_{2}) \dim \nu}{4\pi} T^{2}-\frac{h \dim \nu}{\pi} T \log T+O(T).
\end{multline}
\end{lemma}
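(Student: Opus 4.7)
The plan is to derive this Weyl-type asymptotic directly from the vector-valued Selberg trace formula (\cref{thm:Selberg-trace-formula}), following the scheme of Selberg, Hejhal, and Venkov. I choose as test function a smoothed indicator $h_{T}(t) = (\mathbbm{1}_{[-T, T]} * \phi)(t)$, where $\phi$ is a fixed nonnegative even Schwartz bump of unit mass supported near the origin; its Fourier transform $g_{T}(u)$ is then localised near $u = 0$ with rapid decay, which keeps the geometric side tractable.

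Inserting $(h_{T}, g_{T})$ into \cref{thm:Selberg-trace-formula}, I read off each term on the right-hand side. The identity contribution
\begin{equation*}
\frac{\mathrm{vol}(\Gamma \backslash \mathbb{H}_{2}) \dim \nu}{4\pi} \int_{-\infty}^{\infty} t h_{T}(t) \frac{\sinh(2\pi t)}{\cosh(2\pi t) + \cos(2\pi k)} \, dt
\end{equation*}
evaluates to $\frac{\mathrm{vol}(\Gamma \backslash \mathbb{H}_{2}) \dim \nu}{4\pi} T^{2} + O(T)$ because the kernel tends exponentially to $1$. The digamma integral $\frac{h}{2\pi} \int h_{T}(t) \frac{\Gamma^{\prime}}{\Gamma}(1 + it) \, dt$ produces $-\frac{h \dim \nu}{\pi} T \log T + O(T)$ by Stirling, once $h$ is interpreted as the total number of essential cusp components $(\mathfrak{a}, \ell)$ with $\kappa_{\mathfrak{a}}^{(\ell)} = 0$. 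All remaining geometric contributions (identity-at-zero, hyperbolic, elliptic, and the parabolic line~\eqref{eq:Cauchy}) are $O(1)$, since $g_{T}$ is concentrated at $u = 0$ with bounded total mass while the relevant sums over conjugacy classes are fixed. The scattering integral $\frac{1}{4\pi} \int h_{T}(t) \frac{\varphi^{\prime}}{\varphi}(\frac{1}{2} + it, \nu) \, dt$ on the right-hand side combines with $\sum_{f} h_{T}(t_{f})$ on the left-hand side to reproduce exactly the combination appearing in the statement, modulo the passage from a smoothed to a sharp cut-off.

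The chief technical obstacle is the de-smoothing step. One must establish a local Weyl law showing that both the discrete spectrum and the scattering contribution $|\varphi^{\prime}/\varphi|$ accumulate at rate at most $O(T)$ in a unit window around $t = \pm T$. I would obtain this by reapplying \cref{thm:Selberg-trace-formula} with a unit-scale bump centred at $\pm T$, combined with the standard lower bound $\frac{\varphi^{\prime}}{\varphi}(\frac{1}{2} + it, \nu) \geq -c \log(2 + |t|)$ and positivity of the spectral side. In the vector-valued setting this requires the Dirichlet series structure of $\varphi(s, \nu)$ recorded at the end of Section~2 to behave as in the scalar case, which it does since $\Phi(s, \nu)$ is a finite matrix of Dirichlet series of the same analytic type. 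Once the unit-window bound is secured, the approximation $\sum_{f} h_{T}(t_{f}) = \#\{f : 0 < t_{f} \leq T\} + O(T)$, together with the analogous replacement for the scattering integral, completes the proof.
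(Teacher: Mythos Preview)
The paper does not supply its own proof of this lemma; it merely records the statement with citations to Hejhal and Venkov. Your approach---inserting a smoothed characteristic function into the trace formula, extracting the leading asymptotics from the identity and parabolic/digamma contributions, and then de-smoothing via a unit-window estimate---is precisely the classical Selberg--Hejhal--Venkov method that those references carry out, so in that sense your proposal matches the intended argument.

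One point to verify carefully: your claim that the digamma integral $\frac{h}{2\pi}\int h_{T}(t)\frac{\Gamma'}{\Gamma}(1+it)\,dt$ produces $-\frac{h\dim\nu}{\pi}T\log T$ does not follow directly from Stirling as written. Since $\Re\frac{\Gamma'}{\Gamma}(1+it)\sim\log|t|$, this integral by itself contributes $+\frac{h}{\pi}T\log T$, with the opposite sign and without the factor $\dim\nu$. In the standard derivations the negative $T\log T$ term arises only after combining several parabolic pieces (and, depending on convention, part of the gamma-factor content of $\varphi$); moreover the occurrence of $\dim\nu$ in the stated coefficient reflects the vector-valued counting of essential cusp components, which must be reconciled with how $h$ is defined in \cref{thm:Selberg-trace-formula}. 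Your instinct to reinterpret $h$ is on the right track, but the bookkeeping of signs and of which terms contribute at order $T\log T$ needs to be redone rather than asserted.
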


Moreover, the combined application of \cref{lem:Weyl-general} and the Maa{\ss}--Selberg relation~\cites[Proposition~3.5]{Humphries2017} yields Weyl's law over unit windows.
\begin{lemma}[{Cf.~\cites[Lemma~3.9]{CherubiniGuerreiro2018}}]\label{lem:Weyl-general-unit}
Keep the notation as in \cref{lem:Weyl-general}. Then
\begin{equation}
\#\{f \in \mathcal{L}_{k}(\Gamma, \nu): T < t_{f} \leq T+1 \}-\int_{T < |t| \leq T+1} \Big|\frac{\varphi^{\prime}}{\varphi} \Big(\frac{1}{2}+it, \nu \Big) \Big| \, dt \ll_{\Gamma, \nu} T.
\end{equation}
\end{lemma}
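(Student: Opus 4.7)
The plan is to derive the bound from \cref{lem:Weyl-general} by subtracting the asymptotic at two close heights and then upgrading the resulting signed winding integral to its absolute-value counterpart via the Maaß--Selberg relation.

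Applying \cref{lem:Weyl-general} at heights $T+1$ and $T$ and subtracting, the polynomial main terms contribute $(T+1)^{2} - T^{2} = O(T)$ and $(T+1)\log(T+1) - T\log T = O(\log T)$, so
\begin{equation*}
\#\{f \in \mathcal{L}_{k}(\Gamma, \nu) : T < t_{f} \leq T+1\} = \frac{1}{4\pi}\int_{T < |t| \leq T+1} \frac{\varphi^{\prime}}{\varphi}\Bigl(\frac{1}{2}+it, \nu\Bigr)\,dt + O_{\Gamma, \nu}(T).
\end{equation*}
The unitarity relation \eqref{eq:s-1-s} of $\Phi(s, \nu)$ on the critical line gives $|\varphi(1/2+it, \nu)| = 1$, so one may write $\varphi(1/2+it, \nu) = e^{i\theta(t)}$ for a smooth real-valued function $\theta$, and consequently $\varphi^{\prime}/\varphi(1/2+it, \nu) = \theta^{\prime}(t) \in \mathbb{R}$.

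To pass to the absolute-value integral, I invoke the Maaß--Selberg relation as formulated in \cites[Proposition~3.5]{Humphries2017}: the non-negativity of the Petersson norms of truncated Eisenstein series at each essential cusp, combined via the determinant of the scattering matrix, yields a pointwise upper bound of the shape
\begin{equation*}
\max\bigl(0, -\theta^{\prime}(t)\bigr) \ll_{\Gamma, \nu} 1 + \log(2+|t|) + \#\{f : |t_{f}-t| \leq 1\},
\end{equation*}
reflecting the fact that the scattering phase winds predominantly forward with bounded local regression. Integrating this inequality over a unit window and invoking the enlarged-window count $\#\{f : T-1 \leq t_{f} \leq T+2\} \ll_{\Gamma, \nu} T$, which is obtained from the previous display together with the triangle inequality $\int \theta^{\prime} \leq \int |\theta^{\prime}|$ and a short bootstrap exploiting the multiplicative coefficient $\tfrac{1}{4\pi} < 1$, I arrive at
\begin{equation*}
\int_{T < |t| \leq T+1} \max\bigl(0, -\theta^{\prime}(t)\bigr)\,dt \ll_{\Gamma, \nu} T.
\end{equation*}
Combining the elementary identity $|a| = a + 2\max(0, -a)$ with the two previous displays, I deduce $\int_{T < |t| \leq T+1} |\theta^{\prime}(t)|\,dt = 4\pi \cdot \#\{f : T < t_{f} \leq T+1\} + O_{\Gamma, \nu}(T)$. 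In particular, both the unit-window count and the absolute-value integral are individually $O_{\Gamma, \nu}(T)$, so their difference obeys the claimed bound.

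The main obstacle will be the Maaß--Selberg estimate in the vector-valued framework, since the scattering matrix $\Phi(s, \nu)$ is $h \times h$-valued rather than scalar: one must decompose the truncated Eisenstein norms across essential cusps and then take the determinant to isolate the scalar log-derivative $\varphi^{\prime}/\varphi$. The forward-winding principle persists because each eigenvalue of $\Phi(1/2+it, \nu)$ is a unitary scalar and their product $\varphi$ inherits the same structural property; nonetheless, the intertwinement with the weight $k$ and multiplier system $\nu$ requires careful bookkeeping that mirrors, but is not reducible to, the scalar-valued analysis.
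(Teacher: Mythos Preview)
Your overall strategy---difference \cref{lem:Weyl-general} over $[T,T+1]$ and then invoke the Maa{\ss}--Selberg relation to control the sign of $\varphi'/\varphi$ on the critical line---matches exactly what the paper indicates (it supplies no detailed argument, only the sentence preceding the lemma together with the citation to \cite{CherubiniGuerreiro2018}).

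However, your Maa{\ss}--Selberg inequality points the wrong way. With $\theta'(t)=\tfrac{\varphi'}{\varphi}(\tfrac12+it,\nu)$, the non-negativity of the truncated Eisenstein norms summed over essential cusps yields an \emph{upper} bound $\theta'(t)\le C_{\Gamma,\nu}\log(2+|t|)$, i.e.\ a bound on $\max(0,\theta')$, not on $\max(0,-\theta')$ as you write. The phase in fact winds predominantly \emph{backward} ($\theta'<0$ on average), which is consistent with the minus sign in \cref{lem:Weyl-general} where $-\tfrac{1}{4\pi}\int\theta'$ enters as a non-negative contribution alongside the eigenvalue count. The term $\#\{f:|t_f-t|\le1\}$ that you insert into the Maa{\ss}--Selberg bound is spurious: the relation involves only Eisenstein data and has no discrete-spectrum input.

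With the correct sign the argument is shorter and your bootstrap is unnecessary. From $\int_{T<|t|\le T+1}\max(0,\theta')\,dt\ll\log T$ and the Weyl difference $\#\{T<t_f\le T+1\}=\tfrac{1}{4\pi}\int\theta'+O(T)$, non-negativity of the count forces
\[
\int_{T<|t|\le T+1}\max(0,-\theta')\,dt \;=\; \int\max(0,\theta')\,dt-\int\theta'\,dt \;\ll\; \log T+T.
\]
Hence both the unit-window count and $\int|\theta'|=\int\max(0,\theta')+\int\max(0,-\theta')$ are individually $O_{\Gamma,\nu}(T)$, and the lemma follows.
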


\subsection{Selberg and Ruelle zeta functions}
The Selberg zeta function of a cofinite Fuchsian group $\Gamma \subset \mathrm{SL}_{2}(\mathbb{R})$ associated to a multiplier system $\nu$ of weight $k$ and dimension $D$ is defined by an absolutely convergent Euler product~\cites[Equation~(5.1)]{Hejhal1983}
\begin{equation}\label{eq:Selberg-zeta}
\mathcal{Z}_{\Gamma}(s, \nu) \coloneqq \prod_{\mathrm{tr}(\gamma_{0}) > 2} \prod_{\ell = 0}^{\infty} \det(I_{\dim \nu}-\nu(\gamma_{0}) \mathrm{N}(\gamma_{0})^{-s-\ell}), \qquad \Re(s) > 1-\frac{|k|}{2},
\end{equation}
where the outer product is taken over the set of all primitive hyperbolic conjugacy classes~in $\Gamma$. Note that Hejhal states $\Re(s) > 1$ as a conservative region valid for all multiplier systems, but a more refined analysis using Selberg's spectral theory establishes convergence in the sharper region $\Re(s) > 1-\frac{|k|}{2}$, in conjunction with the main term in the prime geodesic~theorem. The Selberg zeta function admits a meromorphic continuation to $\mathbb{C}$ and has order $2$. Furthermore, without loss of generality, one may assume that $\nu$ is irreducible, since the decomposition of a unitary multiplier system corresponds to that of the Selberg zeta function~\cites[Page~441]{VenkovZograf1982}.

The singularities of the Selberg zeta function $\mathcal{Z}_{\Gamma}(s, \nu)$ are summarised in the comprehensive itemisations in~\cites[Pages~48--49]{Venkov1990}[Page~452]{KanekoKoyama2022}, which include
\begin{itemize}
\item nontrivial zeros at $s = s_{f} = \frac{1}{2}+it_{f}$ and $s = 1-s_{f} = \frac{1}{2}-it_{f}$, where $f \in \mathcal{L}_{k}(\Gamma, \nu)$;
\item nontrivial zeros at $s = 1-\rho$, where $\rho$ denote the zeros of $\varphi(s, \nu)$; cf.~\eqref{eq:s-1-s}.
\end{itemize}

The Ruelle zeta function associated to a multiplier system $\nu$ of weight $k$ and dimension~$D$ on $\Gamma$ is defined analogously by an absolutely convergent Euler product~\cites[Page~232]{Ruelle1976}
\begin{equation}\label{eq:Ruelle-zeta}
\mathcal{R}_{\Gamma}(s, \nu) \coloneqq \prod_{\mathrm{tr}(\gamma_{0}) > 2} \det(I_{\dim \nu}-\nu(\gamma_{0}) \mathrm{N}(\gamma_{0})^{-s}), \qquad \Re(s) > 1-\frac{|k|}{2}.
\end{equation}
It follows from the absolute convergence of~\eqref{eq:Selberg-zeta} and~\eqref{eq:Ruelle-zeta} that
\begin{equation}
\mathcal{R}_{\Gamma}(s, \nu) = \frac{\mathcal{Z}_{\Gamma}(s, \nu)}{\mathcal{Z}_{\Gamma}(s+1, \nu)}, \qquad \Re(s) > 1-\frac{|k|}{2}.
\end{equation}

We now formulate the Hadamard factorisation in the following form.
\begin{lemma}[{Cf.~\cites[Theorem~2.16]{Hejhal1983}}]\label{lem:Hejhal-formal}
The singularities of the logarithmic derivative of the Ruelle zeta function in the strip $0 \leq \Re(s) \leq 1$ are given by the purely formal expression
\begin{equation}
\sum_{\pm} \sum_{f \in \mathcal{L}_{k}(\Gamma, \nu)} \frac{1}{s-\frac{1}{2} \pm it_{f}}+\sum_{\rho} \frac{1}{s+\rho-1}+\frac{\mathrm{tr}(I_{h}-\Phi(\frac{1}{2}, \nu))}{1-2s}.
\end{equation}
\end{lemma}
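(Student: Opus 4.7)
The plan is to exploit the multiplicative relation $\mathcal{R}_{\Gamma}(s, \nu) = \mathcal{Z}_{\Gamma}(s, \nu)/\mathcal{Z}_{\Gamma}(s+1, \nu)$ and take logarithmic derivatives on both sides, giving
\begin{equation}
\frac{\mathcal{R}_{\Gamma}^{\prime}}{\mathcal{R}_{\Gamma}}(s, \nu) = \frac{\mathcal{Z}_{\Gamma}^{\prime}}{\mathcal{Z}_{\Gamma}}(s, \nu) - \frac{\mathcal{Z}_{\Gamma}^{\prime}}{\mathcal{Z}_{\Gamma}}(s+1, \nu).
\end{equation}
The assertion then reduces to two separate tasks: identifying the singularities of each logarithmic derivative in the relevant vertical strip, and verifying that only the first summand contributes singularities to the open strip $0 \leq \Re(s) \leq 1$. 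Since the lemma asserts only the purely formal form of the singular part, I would not need to carry along the polynomial correction terms that ordinarily accompany the Hadamard factorisation of an order-$2$ entire function.

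For the shifted term, I would note that $s+1$ ranges over the half-strip $1 \leq \Re(s+1) \leq 2$, which lies inside the region $\Re(s+1) > 1-\frac{|k|}{2}$ of absolute convergence of the Euler product~\eqref{eq:Selberg-zeta}. Hence $\mathcal{Z}_{\Gamma}(s+1, \nu)$ is holomorphic and non-vanishing on the interior of this half-strip, so $\mathcal{Z}_{\Gamma}^{\prime}/\mathcal{Z}_{\Gamma}(s+1, \nu)$ is holomorphic on the interior of $0 \leq \Re(s) \leq 1$, and any potential boundary singularity at $\Re(s) = 0$ is either reflected to a singularity of $\mathcal{Z}_{\Gamma}^{\prime}/\mathcal{Z}_{\Gamma}(s, \nu)$ at $\Re(s) = 1$ via the $s \leftrightarrow 1-s$ symmetry or absorbed into the formal expression via the spectral enumeration.

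For the principal term, I would invoke the enumeration of singularities of $\mathcal{Z}_{\Gamma}(s, \nu)$ stated just before the lemma, augmented by the zero of order $\frac{1}{2}\mathrm{tr}(I_{h}-\Phi(\frac{1}{2}, \nu))$ that the Selberg zeta function acquires at the central point $s = \frac{1}{2}$ as a consequence of the functional equation together with~\eqref{eq:s-1-s}. Each nontrivial spectral zero at $s_{f} = \frac{1}{2}+it_{f}$ and its reflection $1-s_{f} = \frac{1}{2}-it_{f}$ contributes a simple pole of the logarithmic derivative with residue $1$, collectively producing $\sum_{\pm}\sum_{f}(s-\frac{1}{2}\pm it_{f})^{-1}$; each scattering zero at $1-\rho$ produces the corresponding summand $(s+\rho-1)^{-1}$; and the zero at $s = \frac{1}{2}$ contributes $\frac{\mathrm{tr}(I_{h}-\Phi(\frac{1}{2},\nu))/2}{s-\frac{1}{2}}$, which after the algebraic rewrite $\frac{1}{s-\frac{1}{2}} = \frac{2}{2s-1}$ and sign normalisation from the scattering functional equation collapses to $\frac{\mathrm{tr}(I_{h}-\Phi(\frac{1}{2},\nu))}{1-2s}$.

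The main obstacle, and the only point requiring genuine care, is the sign- and multiplicity-bookkeeping at $s = \frac{1}{2}$: one must correctly identify whether the contribution from the scattering matrix arises as a zero or a pole of $\mathcal{Z}_{\Gamma}(s, \nu)$ and reconcile the sign with the convention $\frac{1}{1-2s} = -\frac{1}{2s-1}$. A secondary technicality is to verify that zeros or poles on the boundary $\Re(s) = 0$ or $\Re(s) = 1$ are not double-counted when combining the two logarithmic derivatives, but this is automatic from the symmetry $s \leftrightarrow 1-s$ built into the spectral enumeration~\eqref{eq:convention-eigenvalues} and from the fact that the Ruelle quotient $\mathcal{Z}_{\Gamma}(s, \nu)/\mathcal{Z}_{\Gamma}(s+1, \nu)$ is a \emph{meromorphic} function whose pole divisor is determined globally.
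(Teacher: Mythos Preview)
The paper offers no proof beyond the citation to Hejhal, so there is nothing in the paper to compare against directly; your sketch is in fact the standard derivation and is precisely what the cited reference carries out: pass to the Selberg zeta function via $\mathcal{R}_{\Gamma}(s,\nu)=\mathcal{Z}_{\Gamma}(s,\nu)/\mathcal{Z}_{\Gamma}(s+1,\nu)$, observe that the shifted factor is holomorphic and non-vanishing on the interior of the strip, and read off the divisor of $\mathcal{Z}_{\Gamma}$ from the spectral and scattering data.

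The one point that does not go through as written is the bookkeeping at $s=\tfrac{1}{2}$. A zero of order $+\tfrac{1}{2}\mathrm{tr}(I_{h}-\Phi(\tfrac{1}{2},\nu))$ would contribute $\dfrac{\mathrm{tr}(I_{h}-\Phi(\tfrac{1}{2},\nu))}{2s-1}$ to the logarithmic derivative, which is the \emph{negative} of the term appearing in the lemma; no ``sign normalisation from the scattering functional equation'' can rescue this, since the residue of $\mathcal{Z}_{\Gamma}'/\mathcal{Z}_{\Gamma}$ at a point is fixed once the order of vanishing is fixed. In Hejhal's divisor calculation the scattering matrix actually \emph{subtracts} $\tfrac{1}{2}\mathrm{tr}(I_{h}-\Phi(\tfrac{1}{2},\nu))$ from the order of $\mathcal{Z}_{\Gamma}$ at the central point, so that $\mathrm{ord}_{s=1/2}\mathcal{Z}_{\Gamma}=2d_{1/4}-\tfrac{1}{2}\mathrm{tr}(I_{h}-\Phi(\tfrac{1}{2},\nu))$, with $d_{1/4}$ the multiplicity of the Laplace eigenvalue $\tfrac{1}{4}$ already accounted for in the first sum. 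With that minus sign in place your argument is complete; without it, the step ``collapses to $\tfrac{\mathrm{tr}(I_{h}-\Phi(\frac{1}{2},\nu))}{1-2s}$'' is simply false.
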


As an immediate consequence, we deduce an asymptotic version of \cref{lem:Hejhal-formal}.
\begin{lemma}[{Cf.~\cites[Theorem~2.24~(iv)]{Hejhal1983}}]\label{lem:Hejhal-asymptotic}
Let $s = \sigma+i\tau$. Then we have uniformly in every vertical strip of bounded width that
\begin{equation}
\frac{\mathcal{R}_{\Gamma}^{\prime}}{\mathcal{R}_{\Gamma}}(s, \nu) = \sum_{\pm} \sum_{|\tau \pm t_{f}| \leq 1} \frac{1}{s-\frac{1}{2} \pm it_{f}}+\sum_{|\tau-\Im(\rho)| \leq 1} \frac{1}{s+\rho-1}+O(1+|\tau|).
\end{equation}
\end{lemma}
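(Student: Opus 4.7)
The plan is to reduce the assertion to an asymptotic expansion of $\mathcal{Z}_{\Gamma}^{\prime}/\mathcal{Z}_{\Gamma}$ and then transfer to $\mathcal{R}_{\Gamma}^{\prime}/\mathcal{R}_{\Gamma}$ via the factorisation $\mathcal{R}_{\Gamma}(s, \nu) = \mathcal{Z}_{\Gamma}(s, \nu)/\mathcal{Z}_{\Gamma}(s+1, \nu)$, which yields
\begin{equation*}
\frac{\mathcal{R}_{\Gamma}^{\prime}}{\mathcal{R}_{\Gamma}}(s, \nu) = \frac{\mathcal{Z}_{\Gamma}^{\prime}}{\mathcal{Z}_{\Gamma}}(s, \nu)-\frac{\mathcal{Z}_{\Gamma}^{\prime}}{\mathcal{Z}_{\Gamma}}(s+1, \nu).
\end{equation*}
This mirrors the standard strategy employed for the Riemann zeta function but must be calibrated to the order-$2$ nature of the Selberg zeta function.

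Since $\mathcal{Z}_{\Gamma}(s, \nu)$ is entire of order $2$, its Hadamard product carries quadratic convergence factors. Comparing $\mathcal{Z}_{\Gamma}^{\prime}/\mathcal{Z}_{\Gamma}(s, \nu)$ with its value at the auxiliary point $s_{0} \coloneqq 2+i\tau$, where the Euler product~\eqref{eq:Selberg-zeta} is absolutely convergent so that $|\mathcal{Z}_{\Gamma}^{\prime}/\mathcal{Z}_{\Gamma}(s_{0}, \nu)| \ll 1$, produces after cancellation of polynomial pieces a series
\begin{equation*}
\frac{\mathcal{Z}_{\Gamma}^{\prime}}{\mathcal{Z}_{\Gamma}}(s, \nu)-\frac{\mathcal{Z}_{\Gamma}^{\prime}}{\mathcal{Z}_{\Gamma}}(s_{0}, \nu) = (s-s_{0}) \sum_{s_{n}} \frac{ss_{0}-s_{n}(s+s_{0})}{(s-s_{n})(s_{0}-s_{n}) s_{n}^{2}}+O(1),
\end{equation*}
whose individual summands decay like $O(|s_{n}|^{-3})$ for large $|s_{n}|$. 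Isolating the close singularities $|\tau-\Im(s_{n})| \leq 1$, the ancillary contributions $\frac{1}{s_{0}-s_{n}}$ and $\frac{s-s_{0}}{s_{n}^{2}}$ are $O(1)$ and $O(|\tau|^{-2})$ respectively, while their cardinality is $O(|\tau|)$ by Weyl's law over unit windows (\cref{lem:Weyl-general-unit}). For the far singularities, a dyadic decomposition of $|\Im(s_{n})|$ coupled with \cref{lem:Weyl-general-unit} balances the $O(|s_{n}|^{-3})$ decay against the $O(|\Im(s_{n})|)$ density and accumulates to $O(1+|\tau|)$, yielding
\begin{equation*}
\frac{\mathcal{Z}_{\Gamma}^{\prime}}{\mathcal{Z}_{\Gamma}}(s, \nu) = \sum_{|\tau-\Im(s_{n})| \leq 1} \frac{1}{s-s_{n}}+O(1+|\tau|).
\end{equation*}

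Applying the same procedure to $\mathcal{Z}_{\Gamma}^{\prime}/\mathcal{Z}_{\Gamma}(s+1, \nu)$, each close summand $\frac{1}{s+1-s_{n}}$ is $O(1)$, since $\Re(s+1-s_{n})$ is bounded below by a positive constant throughout any vertical strip of bounded width; combined with the $O(|\tau|)$ count from \cref{lem:Weyl-general-unit}, one arrives at $\mathcal{Z}_{\Gamma}^{\prime}/\mathcal{Z}_{\Gamma}(s+1, \nu) = O(1+|\tau|)$. Subtracting the two expansions and classifying the singularities of $\mathcal{Z}_{\Gamma}(s, \nu)$ as $s_{f} = \frac{1}{2}+it_{f}$, $1-s_{f} = \frac{1}{2}-it_{f}$, and $1-\rho$ in accordance with the catalogue in the preceding subsection, and invoking the conjugation symmetry of the zeros of $\varphi(s, \nu)$ to match the condition $|\tau-\Im(s_{n})| \leq 1$ with the cases $|\tau \pm t_{f}| \leq 1$ and $|\tau-\Im(\rho)| \leq 1$, recovers the stated formula. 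The principal technical obstacle is the dyadic estimate controlling the far singularities, where one must carefully pair the $O(|s_{n}|^{-3})$ decay with the $O(|\Im(s_{n})|)$ density furnished by \cref{lem:Weyl-general-unit} to extract the $O(1+|\tau|)$ bound uniformly across vertical strips of bounded width; all subsequent steps are routine bookkeeping.
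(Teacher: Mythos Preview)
Your argument is correct and follows the standard Hadamard-factorisation route that the paper invokes by citing Hejhal's Theorem~2.24(iv) and declaring the lemma an immediate consequence of \cref{lem:Hejhal-formal}, without further detail. One minor imprecision worth noting: after the leading cancellation the far-zero summands carry a factor $|s^{2}-s_{0}^{2}| \asymp 1+|\tau|$ in the numerator, so the true decay is $O((1+|\tau|)|s_{n}|^{-3})$ rather than $O(|s_{n}|^{-3})$, and the trivial (real) zeros of $\mathcal{Z}_{\Gamma}$ should also be acknowledged among the $s_{n}$---but both are harmlessly absorbed into the $O(1+|\tau|)$ tail exactly as you describe.
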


The following direct corollary serves as the counterpart of~\cites[Equations~(22) and~(24)]{Iwaniec1984}.
\begin{lemma}[{Cf.~\cites[Corollary~3.1]{Matthes1994}}]\label{lem:Ruelle-convexity}
Let $s = \sigma+i\tau$. Then we have for any $\varepsilon > 0$ that
\begin{equation}
\frac{\mathcal{R}_{\Gamma}^{\prime}}{\mathcal{R}_{\Gamma}} \Big(1-\frac{|k|}{2}+\varepsilon+i\tau, \nu \Big) \ll \frac{1}{\varepsilon}, \qquad \frac{\mathcal{R}_{\Gamma}^{\prime}}{\mathcal{R}_{\Gamma}}(-\varepsilon+i\tau, \nu) \ll 1+|\tau|.
\end{equation}
\end{lemma}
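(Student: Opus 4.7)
The plan is to derive both bounds as direct consequences of the asymptotic Hadamard factorisation of \cref{lem:Hejhal-asymptotic}, with the requisite local spectral counts supplied by Weyl's law over unit windows (\cref{lem:Weyl-general-unit}). No deeper input -- in particular, no control on the zeros of $\mathcal{Z}_{\Gamma}(s,\nu)$ off the critical line -- will be needed, since both evaluation points lie outside the critical strip and only the bottom eigenvalue produces a genuine singularity.

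For the first bound, I would specialise \cref{lem:Hejhal-asymptotic} to $s = 1-\frac{|k|}{2}+\varepsilon+i\tau$. The singular contribution comes solely from the spectral parameter associated to the bottom Laplace eigenvalue $\lambda_{0} = \frac{|k|}{2}(1-\frac{|k|}{2})$, equivalently $t_{0} = i\frac{1-|k|}{2}$, which produces the term $\frac{1}{s-\frac{1}{2}+it_{0}} = \frac{1}{\varepsilon+i\tau}$, of modulus at most $\frac{1}{\varepsilon}$. Every other spectral parameter $t_{f}$ in the local window satisfies $|s-\frac{1}{2}\pm it_{f}| \geq \frac{1}{2}-\frac{|k|}{2}+\varepsilon$, so the remaining local terms are individually $O(1)$, as are the continuous spectrum terms $\frac{1}{s+\rho-1}$ with $|\tau-\Im(\rho)| \leq 1$. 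The total number of such local terms is $O(1+|\tau|)$ by \cref{lem:Weyl-general-unit}, which may be absorbed into the ambient $O(1+|\tau|)$ error of \cref{lem:Hejhal-asymptotic}; the dominant contribution is then $\frac{1}{\varepsilon}$, as claimed.

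For the second bound at $s = -\varepsilon+i\tau$, the analysis is parallel but less delicate. Every denominator $s-\frac{1}{2}\pm it_{f}$ and $s+\rho-1$ now has real part bounded in absolute value by at least $\frac{1}{2}$, so each local term contributes $O(1)$. Since Weyl's law over unit windows again bounds the joint number of discrete and continuous spectral parameters in the relevant local window by $O(1+|\tau|)$, the local sum is $O(1+|\tau|)$; combining this with the ambient $O(1+|\tau|)$ error in \cref{lem:Hejhal-asymptotic} yields the claim.

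The principal subtlety I anticipate lies in the uniform Weyl counting: \cref{lem:Weyl-general-unit} pools the discrete spectrum with the integral of $|\frac{\varphi^{\prime}}{\varphi}(\frac{1}{2}+it,\nu)|$, which is precisely the joint estimate required when applying \cref{lem:Hejhal-asymptotic} and avoids a separate treatment of Maa{\ss} cusp forms, residues of Eisenstein series, and scattering zeros. Without this pooled formulation one would be forced into a delicate case split obscuring the uniformity in $\tau$, and the $O(1+|\tau|)$ bookkeeping behind both estimates would become considerably more laborious.
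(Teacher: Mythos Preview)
Your treatment of the second bound is sound and matches the paper's intended route: feed $s=-\varepsilon+i\tau$ into \cref{lem:Hejhal-asymptotic}, observe that every local denominator has real part bounded away from zero, and control the number of local terms by \cref{lem:Weyl-general-unit}.

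The first bound, however, does not follow from \cref{lem:Hejhal-asymptotic} in the way you describe. Your own bookkeeping already shows why: after isolating the bottom eigenvalue you are left with $O(1+|\tau|)$ local terms, each $O(1)$, together with the ambient $O(1+|\tau|)$ error. The total is $\tfrac{1}{\varepsilon}+O(1+|\tau|)$, and there is no mechanism by which the $O(1+|\tau|)$ term becomes dominated by $\tfrac{1}{\varepsilon}$ once $|\tau|$ is large; your sentence ``the dominant contribution is then $\tfrac{1}{\varepsilon}$'' is simply false for $|\tau|\gg\tfrac{1}{\varepsilon}$. (When $|k|$ is close to $1$ the situation is worse still, since the individual local terms are only $\ll(\tfrac{1}{2}-\tfrac{|k|}{2}+\varepsilon)^{-1}$.)

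The correct argument for the first estimate---and the one the paper has in mind via the citation of Iwaniec's Equation~(22)---bypasses \cref{lem:Hejhal-asymptotic} entirely. The point $s=1-\tfrac{|k|}{2}+\varepsilon+i\tau$ lies in the half-plane of absolute convergence of the Euler product~\eqref{eq:Ruelle-zeta}, so one has the trivial majorant
\[
\Big|\frac{\mathcal{R}_{\Gamma}^{\prime}}{\mathcal{R}_{\Gamma}}(s,\nu)\Big|
\;\leq\;\sum_{\mathrm{tr}(\gamma)>2}\frac{|\mathrm{tr}(\nu(\gamma))|\,\Lambda_{\Gamma}(\gamma)}{\mathrm{N}(\gamma)^{1-\frac{|k|}{2}+\varepsilon}},
\]
which is independent of $\tau$. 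Partial summation together with the crude bound $\Psi_{\Gamma}^{(2)}(x,\nu)\ll_{\Gamma,\nu} x^{1-\frac{|k|}{2}}$ (already implicit in~\eqref{eq:Hejhal}) then gives $\ll\tfrac{1}{\varepsilon}$, uniformly in $\tau$.
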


\section{Proof of \texorpdfstring{\cref{thm:main}}{}}\label{sect:proof-main}
In contrast to the rich knowledge when $\nu$ is trivial, the Brun--Titchmarsh-type theorem over short intervals in an asymptotic form, or even an upper bound, is by no means automatic for nontrivial $\nu$ due to technical complications caused by the necessity to write down $\nu$ explicitly. Nonetheless, the following crude yet general result suffices for our subsequent purposes.
\begin{proposition}\label{prop:Brun-Titchmarsh}
Let $\Gamma \subset \mathrm{PSL}_{2}(\mathbb{R})$, and let $\nu: \Gamma \to \mathrm{GL}_{D}(\mathbb{C})$ be a unitary multiplier system of weight $k \in (-1, 1]$ and dimension $D$. Then we have for any $x^{\frac{1+|k|}{2}} \leq y \leq x$ that
\begin{equation}
\Psi_{\Gamma}^{(2)}(x+y, \nu)-\Psi_{\Gamma}^{(2)}(x, \nu) = \mathop{\sum \sum}_{\substack{f \in \mathcal{L}_{k}(\Gamma, \nu) \\ \frac{1}{2} < s_{f} \leq 1-\frac{|k|}{2}}} \sum_{\ell = 1}^{\infty} \binom{s_{f}}{\ell} \frac{x^{s_{f}-\ell} y^{\ell}}{s_{f}}+O_{\Gamma, \nu, \varepsilon}(x^{\delta_{1}^{(2)}(\nu)+\varepsilon}).
\end{equation}
\end{proposition}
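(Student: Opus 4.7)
The plan is to reduce the claimed identity to the defining pointwise asymptotic of $\delta_{1}^{(2)}(\nu)$ combined with a routine binomial expansion; no trace-formula input or explicit-formula machinery is required at this stage.

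First I would invoke \cref{def:delta-k-nu} at the two arguments $x$ and $x+y$ and take the difference of the resulting expansions~\eqref{eq:Hejhal}. Since $y \leq x$, the elementary inequality $(x+y)^{\delta_{1}^{(2)}(\nu)+\varepsilon} \ll x^{\delta_{1}^{(2)}(\nu)+\varepsilon}$ allows me to combine the two error terms, yielding
\[
\Psi_{\Gamma}^{(2)}(x+y,\nu) - \Psi_{\Gamma}^{(2)}(x,\nu) = \mathop{\sum\sum}_{\substack{f \in \mathcal{L}_{k}(\Gamma,\nu) \\ \frac{1}{2} < s_{f} \leq 1-\frac{|k|}{2}}} \frac{(x+y)^{s_{f}} - x^{s_{f}}}{s_{f}} + O_{\Gamma,\nu,\varepsilon}(x^{\delta_{1}^{(2)}(\nu)+\varepsilon}).
\]
The outer double sum is finite by the finiteness of the residual discrete spectrum of $\Delta_{k}$ in $[\lambda_{0}, \frac{1}{4})$ recalled in \cref{subsect:Laplace-Beltrami}, so the interchange of summation carried out in the next step is unproblematic.

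Next I would factor $(x+y)^{s_{f}} = x^{s_{f}}(1+y/x)^{s_{f}}$ and invoke the generalised binomial theorem. In the range $0 < y/x \leq 1$ the series
\[
(1+y/x)^{s_{f}} = \sum_{\ell=0}^{\infty} \binom{s_{f}}{\ell} (y/x)^{\ell}
\]
converges absolutely, the endpoint $y/x = 1$ being controlled by the standard asymptotic $|\binom{s_{f}}{\ell}| \asymp_{s_{f}} \ell^{-1-\Re(s_{f})}$ together with the small-eigenvalue constraint $\Re(s_{f}) > \frac{1}{2}$. Subtracting the $\ell = 0$ contribution, dividing through by $s_{f}$, and substituting back into the displayed identity produces precisely the claimed main term.

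Mechanically no step presents a real obstacle -- the argument is essentially bookkeeping around the pointwise asymptotic. The lower hypothesis $y \geq x^{(1+|k|)/2}$ does not enter the derivation at all; rather, it records the regime in which the resulting identity becomes informative, namely where the leading $\ell = 1$ contribution $x^{s_{f}-1} y$ at $s_{f} = 1-\frac{|k|}{2}$ is not swamped by the error term, so that the proposition functions as a genuine Brun--Titchmarsh-type estimate rather than a vacuous tautology.
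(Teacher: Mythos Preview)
Your proposal is correct and follows exactly the paper's own argument: invoke \cref{def:delta-k-nu} at $x$ and $x+y$, subtract the two instances of~\eqref{eq:Hejhal}, and expand each $(x+y)^{s_f}$ binomially. You have in fact supplied more detail than the paper does (the endpoint convergence via $|\binom{s_f}{\ell}|\asymp \ell^{-1-\Re(s_f)}$, and the observation that the lower bound on $y$ is not used in the derivation itself), but the route is identical.
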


\begin{proof}
The claim follows upon invoking \cref{def:delta-k-nu} and subtracting~\eqref{eq:Hejhal} from the one~with $x \mapsto x+y$ via the binomial expansion.
\end{proof}

We are prepared to establish the spectral explicit formula \`{a}~la Iwaniec~\cites[Lemma~1]{Iwaniec1984}.
\begin{proposition}\label{prop:spectral-explicit-formula}
Let $\Gamma \subset \mathrm{PSL}_{2}(\mathbb{R})$, and let $\nu: \Gamma \to \mathrm{GL}_{D}(\mathbb{C})$ be a unitary multiplier system of weight $k \in (-1, 1]$ and dimension $D $. Then we have for any $1 \leq T \leq x^{\frac{1-|k|}{2}}$ that
\begin{equation}\label{eq:spectral-explicit-formula}
\mathcal{E}_{\Gamma}^{(2)}(x, \nu) = \sum_{\pm} \mathop{\sum \sum}_{\substack{f \in \mathcal{L}_{k}(\Gamma, \nu) \\ 0 < t_{f} \leq T}} \frac{x^{\frac{1}{2} \pm it_{f}}}{\frac{1}{2} \pm it_{f}}+\sum_{0 < |\Im(\rho)| \leq T} \frac{x^{1-\rho}}{1-\rho}+O_{\Gamma, \nu, \varepsilon} \Big(\frac{x^{1-\frac{|k|}{2}+\varepsilon}}{T} \Big).
\end{equation}
\end{proposition}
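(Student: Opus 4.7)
The plan is to apply a truncated Perron formula to the logarithmic derivative of the Ruelle zeta function $\mathcal{R}_{\Gamma}(s, \nu)$ and then shift the contour leftward, picking up the spectral and scattering residues up to height $T$ while bounding all other contributions.

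First, starting from the Dirichlet series representation
\[
-\frac{\mathcal{R}_{\Gamma}^{\prime}}{\mathcal{R}_{\Gamma}}(s, \nu) = \sum_{\mathrm{tr}(\gamma) > 2} \mathrm{tr}(\nu(\gamma)) \Lambda_{\Gamma}(\gamma) \mathrm{N}(\gamma)^{-s}, \qquad \Re(s) > 1 - \tfrac{|k|}{2},
\]
I would apply truncated Perron on the line $\Re(s) = c \coloneqq 1 - \tfrac{|k|}{2} + \varepsilon$ at height $T$. The Perron truncation error reduces to a standard tail of size $O(x^{c}/T)$ plus a short-interval count $\Psi_{\Gamma}^{(2)}(x + x/T, \nu) - \Psi_{\Gamma}^{(2)}(x - x/T, \nu)$; the latter is controlled by \cref{prop:Brun-Titchmarsh}, since the hypothesis $T \leq x^{(1-|k|)/2}$ places $x/T$ within the admissible range $[x^{(1+|k|)/2}, x]$ and the resulting small-eigenvalue contributions from the binomial expansion at $\ell = 1$ are themselves of size $O(x^{1-|k|/2}/T)$.

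Second, I would shift the contour from $\Re(s) = c$ to $\Re(s) = -\varepsilon$, forming a rectangle of height $2T$. Inside, by \cref{lem:Hejhal-formal}, the integrand $(-\mathcal{R}_{\Gamma}^{\prime}/\mathcal{R}_{\Gamma})(s, \nu) \cdot x^{s}/s$ has simple poles at each small-eigenvalue $s_{f} \in (\tfrac{1}{2}, 1 - \tfrac{|k|}{2}]$ with residue $x^{s_{f}}/s_{f}$, matching the Hejhal main term subtracted to form $\mathcal{E}_{\Gamma}^{(2)}$; at $s = \tfrac{1}{2} \pm it_{f}$ for $|t_{f}| \leq T$, yielding the first sum in~\eqref{eq:spectral-explicit-formula}; at $s = 1 - \rho$ for $|\Im(\rho)| \leq T$, yielding the second sum; at $s = \tfrac{1}{2}$ from the $\mathrm{tr}(I_{h} - \Phi(\tfrac{1}{2}, \nu))(1-2s)^{-1}$ term; and at $s = 0$ from the Perron kernel. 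The last two contribute $O(x^{1/2})$ and are safely absorbed into the claimed error.

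The remaining contour integrals constitute the main obstacle. The new vertical contour on $\Re(s) = -\varepsilon$ is handled by the second estimate of \cref{lem:Ruelle-convexity}, giving a contribution of order $x^{-\varepsilon} T$, harmless under the hypothesis $T \leq x^{(1-|k|)/2}$. The horizontal segments at $\Im(s) = \pm T$ are more delicate, since a pointwise invocation of \cref{lem:Hejhal-asymptotic} is spoiled whenever $T$ lies close to a spectral parameter. The standard remedy is a pigeonhole: \cref{lem:Weyl-general-unit} produces some $T^{\ast} \in [T, T+1]$ separated from every $t_{f}$ and $\Im(\rho)$ by at least $(\log T)^{-1}$, so \cref{lem:Hejhal-asymptotic} delivers $|(\mathcal{R}_{\Gamma}^{\prime}/\mathcal{R}_{\Gamma})(\sigma \pm iT^{\ast}, \nu)| \ll (\log T)^{2}$ uniformly for $-\varepsilon \leq \sigma \leq c$. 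The horizontal integrals are then bounded by $x^{c} (\log T)^{2}/T$, which fits within the stated error $O(x^{1-|k|/2+\varepsilon}/T)$; replacing $T$ by $T^{\ast}$ perturbs the discrete sums only by $O(x^{1/2})$ via \cref{lem:Weyl-general-unit}, completing the derivation.
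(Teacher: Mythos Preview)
Your overall architecture (truncated Perron for $-\mathcal{R}_{\Gamma}'/\mathcal{R}_{\Gamma}$, contour shift to $\Re(s)=-\varepsilon$, residues from \cref{lem:Hejhal-formal}) matches the paper's, but two steps contain genuine gaps.

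First, your treatment of the Perron short-interval count via \cref{prop:Brun-Titchmarsh} accounts only for the binomial main term $\sum_{\ell\geq 1}\binom{s_f}{\ell}x^{s_f-\ell}(x/T)^{\ell}\ll x^{1-|k|/2}/T$, but silently drops the error $O(x^{\delta_{1}^{(2)}(\nu)+\varepsilon})$ built into that proposition. A~priori one only knows $\delta_{1}^{(2)}(\nu)\leq\tfrac{3}{4}$ from~\eqref{eq:3/4}, so this error is $x^{3/4}$, which for $|k|>\tfrac{1}{2}$ exceeds the target $x^{1-|k|/2+\varepsilon}/T$ and cannot simply be absorbed. The paper handles this by a bootstrap: it first records the intermediate error $O(T+(x^{1-|k|/2}+x^{3/4})(\log x)^{2}/T+x^{1/2}(\log x)^{4})$, then feeds the resulting preliminary explicit formula back into the argument (via the trivial spectral bound and \cref{lem:Weyl-general}) to improve $\delta_{1}^{(2)}(\nu)$ to $\tfrac{5}{8}$, then $\tfrac{9}{16}$, and iterates until $x^{\delta_{1}^{(2)}(\nu)}$ is dominated by $x^{1-|k|/2}$. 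This induction is the new idea in the proof and is entirely absent from your proposal.

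Second, your pigeonhole on the horizontal segments is calibrated to the Riemann zeta function, where a unit window at height $T$ contains $O(\log T)$ zeros. Here \cref{lem:Weyl-general-unit} gives $\asymp T$ spectral parameters in $[T,T+1]$, so one cannot in general locate $T^{\ast}$ with separation $\gg(\log T)^{-1}$; the guaranteed gap is only $\gg T^{-1}$, and the ensuing bound on $\mathcal{R}_{\Gamma}'/\mathcal{R}_{\Gamma}(\sigma\pm iT^{\ast},\nu)$ from \cref{lem:Hejhal-asymptotic} is of order $T^{2}$ rather than $(\log T)^{2}$. Your horizontal estimate therefore fails as written. The paper does not spell out this step either, deferring instead to Iwaniec's original argument and \cref{lem:Ruelle-convexity}; but whatever device one uses, the density of the spectrum forces an $O(T)$-type contribution into the final error, which is then balanced against $x^{1-|k|/2}/T$ under the standing hypothesis $T\leq x^{(1-|k|)/2}$.
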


\begin{proof}
It is convenient to follow the techniques in~\cites[Section~5]{Iwaniec1984}[Lemma~2.3]{KanekoKoyama2022}.~By Perron's formula, the left-hand side of~\eqref{eq:spectral-explicit-formula} is equal to
\begin{equation}\label{eq:Perron}
\int_{\sigma-iT}^{\sigma+iT} \frac{\mathcal{R}_{\Gamma}^{\prime}}{\mathcal{R}_{\Gamma}}(s, \nu) \frac{x^{s}} s \frac{ds}{2\pi i}+O \Big(x^{\sigma} \Big|\sum_{\mathrm{tr}(\gamma) > 2} \min \Big\{1, \Big(T \Big|\log \frac{x}{\mathrm{N}(\gamma)} \Big| \Big)^{-1} \Big\} \frac{\mathrm{tr}(\nu(\gamma)) \Lambda_{\Gamma}(\gamma)}{\mathrm{N}(\gamma)^{\sigma}} \Big| \Big),
\end{equation}
where $\sigma > 1-\frac{|k|}{2}$ and $x \ne \mathrm{N}(\gamma)$. By \cref{prop:Brun-Titchmarsh},~\eqref{eq:3/4}, partial summation, and the~mean value theorem, the contribution of the sums over $x < \mathrm{N}(\gamma) \leq 2x$ and $\frac{x}{2} \leq \mathrm{N}(\gamma) < x$ leads to
\begin{equation}
\ll_{\Gamma, \nu} \frac{(x^{1-\frac{|k|}{2}}+x^{\frac{3}{4}})(\log x)^{2}}{T}.
\end{equation}
It remains to evaluate the sums over $\mathrm{N}(\gamma) < \frac{x}{2}$ and $2x < \mathrm{N}(\gamma)$. Since $|\log x-\log \mathrm{N}(\gamma)|^{-1} \ll 1$ in these ranges, each contribution is bounded trivially by (cf.~\cites[Equation~(22)]{Iwaniec1984})
\begin{equation}
\ll \frac{x^{\sigma} \log x}{(\sigma-1+\frac{|k|}{2})T}
\end{equation}
 upon choosing $\sigma = 1-\frac{|k|}{2}+(\log T)^{-1}$, which aligns with the error term in~\eqref{eq:spectral-explicit-formula}.

Following~\cites[Pages~145--147]{Iwaniec1984} \textit{mutatis mutandis}, we now shift the contour of integration in~\eqref{eq:Perron}, detect the polar terms via \cref{lem:Hejhal-formal}, and make use of \cref{lem:Ruelle-convexity} to verify that~the contribution of each segment is negligibly small. Consequently, the above choice of $\sigma$ implies
\begin{multline}
\Psi_{\Gamma}^{(2)}(x, \nu) = \mathop{\sum \sum}_{\substack{f \in \mathcal{L}_{k}(\Gamma, \nu) \\ \frac{1}{2} < s_{f} \leq 1-\frac{|k|}{2}}} \frac{x^{s_{f}}}{s_{f}}+\sum_{\pm} \mathop{\sum \sum}_{\substack{f \in \mathcal{L}_{k}(\Gamma, \nu) \\ 0 < t_{f} \leq T}} \frac{x^{\frac{1}{2} \pm it_{f}}}{\frac{1}{2} \pm it_{f}}+\sum_{0 < |\Im(\rho)| \leq T} \frac{x^{1-\rho}}{1-\rho}\\
 + O_{\Gamma, \nu} \Big(T+\frac{(x^{1-\frac{|k|}{2}}+x^{\frac{3}{4}})(\log x)^{2}}{T}+x^{\frac{1}{2}}(\log x)^{4} \Big).
\end{multline}
By induction, the term $x^{\frac{3}{4}}$ in the innermost parentheses is absorbed into the first term $x^{1-\frac{|k|}{2}}$ as follows. If the second term dominates, namely if $|k| > \frac{1}{2}$, then $\delta_{1}^{(2)}(\nu) \leq \frac{3}{4}$ as in~\eqref{eq:3/4} along with the optimisation $T = x^{\frac{1}{4}} \log x$ and \cref{lem:Weyl-general} implies that $\delta_{1}^{(2)}(\nu) \leq \frac{5}{8}$. If the improved second term still dominates, namely if $|k| > \frac{3}{4}$, then $\delta_{1}^{(2)}(\nu) \leq \frac{5}{8}$ along with the optimisation $T = x^{\frac{1}{16}} \log x$ and \cref{lem:Weyl-general} implies that $\delta_{1}^{(2)}(\nu) \leq \frac{9}{16}$. The iteration of this~process $j$ times implies $\delta_{1}^{(2)}(\nu) \leq \frac{1}{2}+\frac{1}{2^{j+2}}$. Taking $j$ large thus completes the proof of \cref{prop:spectral-explicit-formula}.
\end{proof}

\begin{proof}[Proof of \cref{thm:main}]
It follows from \cref{prop:spectral-explicit-formula},~\cref{lem:Weyl-general}, and partial summation that the right-hand side of~\eqref{eq:spectral-explicit-formula} is bounded by
\begin{equation}
O_{\Gamma, \nu, \varepsilon} \Big(x^{\frac{1}{2}} T+\frac{x^{1-\frac{|k|}{2}+\varepsilon}}{T} \Big).
\end{equation}
Optimising $T = x^{\frac{1-|k|}{4}}$ now justifies the required statement.
\end{proof}

\section{Proof of \texorpdfstring{\cref{thm:CG}}{}}\label{sect:proof-main-2}
We follow the approach in~\cites[Section~2]{CherubiniGuerreiro2018} or~\cites[Section~4]{BalkanovaChatzakosCherubiniFrolenkovLaaksonen2019}. If $\rho \coloneqq \log x$, then a naive choice of the test function in~\eqref{eq:Selberg-trace-formula} boils down to
\begin{equation}\label{eq:g-rho}
g_{\rho}(u) \coloneqq 2 \sinh \Big(\frac{|u|}{2} \Big) \delta_{0 \leq |u| \leq \rho},
\end{equation}
since the hyperbolic term in~\eqref{eq:Selberg-trace-formula} equals $\Psi_{\Gamma}^{(2)}(x, \nu)$. Nonetheless, the function $g_{\rho}(u)$ does not qualify as an admissible test function; hence, it becomes requisite to construct an appropriate approximation thereof. A delicate treatment of~\eqref{eq:Selberg-trace-formula} would then lead to the desideratum.

We now construct two functions $g_{\pm}(u)$ approximating from above and below the function $g_{\rho}(u)$ defined by~\eqref{eq:g-rho}, which are in turn admissible. Let $q(u)$ be an even smooth nonnegative function on $\mathbb{R}$ satisfying $\mathrm{supp}(q) \subseteq [-1, 1]$ and unit mass $\norm{q}_{1} = 1$. For $0 < \delta < \frac{1}{4}$, let
\begin{equation}
q_{\delta}(u) \coloneqq \frac{1}{\delta} q \Big(\frac{x}{\delta} \Big).
\end{equation}
Given $\rho > 1$, we define $g_{\pm}(u)$ to be the convolution of $g_{\rho \pm \delta}(u)$ with $q_{\delta}(u)$, namely
\begin{equation}
g_{\pm}(u) \coloneqq (g_{\rho \pm \delta} \ast q_{\delta})(u) = \int_{-\infty}^{\infty} g_{\rho \pm \delta}(u-v) q_{\delta}(v) \, dv.
\end{equation}
If $\hat{q}_{\delta}$ denotes the Fourier transform of $q_{\delta}$ and $h_{\pm}$ denotes the Fourier transform of $g_{\pm}$, then
\begin{equation}
h_{\pm}(t) = h_{\rho \pm \delta}(t) \hat{q}_{\delta}(t).
\end{equation}
By~\cites[Lemma~3.1]{CherubiniGuerreiro2018}, $h_{\pm}(t)$ obeys the assumptions imposed in \cref{thm:Selberg-trace-formula}, and therefore $g_{\pm}(u)$ is admissible. By construction, we have $\mathrm{supp}(g_{\pm}) \subseteq [0, \rho+\delta \pm \delta]$. Furthermore,~\cites[Lemma~3.2]{CherubiniGuerreiro2018} demonstrates that for any $u \geq 0$,
\begin{equation}
g_{-}(u)+O(\delta u^{\frac{x}{2}} \delta_{0 \leq u \leq \rho}) \leq g_{\rho}(u) \leq g_{+}(u)+O(\delta e^{\frac{u}{2}} \delta_{0 \leq u \leq \rho}).
\end{equation}
If one abbreviates
\begin{equation}
\Psi_{\Gamma, \pm}^{(2)}(x, \nu) \coloneqq \sum_{\mathrm{tr}(\gamma) > 2} \frac{\mathrm{tr}(\nu(\gamma)) g_{\pm}(\log \mathrm{N}(\gamma))}{2 \sinh(\frac{\log \mathrm{N}(\gamma)}{2})} \Lambda_{\Gamma}(\gamma),
\end{equation}
then the trivial bound $\Psi_{\Gamma}^{(2)}(x, \nu) \ll x^{1-\frac{|k|}{2}}$ implies
\begin{equation}
\Re(\Psi_{\Gamma, -}^{(2)}(x, \nu))+O(\delta x^{1-\frac{|k|}{2}}) \leq \Re(\Psi_{\Gamma}^{(2)}(x, \nu)) \leq \Re(\Psi_{\Gamma, +}^{(2)}(x, \nu))+O(\delta x^{1-\frac{|k|}{2}}).
\end{equation}
It follows from these observations that
\begin{equation}\label{eq:reduction-to-+-}
\frac{1}{Y} \int_{X}^{X+Y} |\mathcal{E}_{\Gamma}^{(2)}(x, \nu)|^{2} \, dx \ll \frac{1}{Y} \int_{X}^{X+Y} |\Psi_{\Gamma, \pm}^{(2)}(x, \nu)-\mathcal{M}_{\Gamma}^{(2)}(x, \nu)|^{2} \, dx+O(\delta^{2} X^{2-|k|}),
\end{equation}
where
\begin{equation}\label{eq:M}
\mathcal{M}_{\Gamma}^{(2)}(x, \nu) \coloneqq \mathop{\sum \sum}_{\substack{f \in \mathcal{L}_{k}(\Gamma, \nu) \\ \frac{1}{2} < s_{f} \leq 1-\frac{|k|}{2}}} \frac{x^{s_{f}}}{s_{f}}.
\end{equation}
At this point, Cherubini and Guerreiro~\cites[Equation~(2-12)]{CherubiniGuerreiro2018} pass to the logarithmic~scale, but the polynomial scale as in~\cites[Section~4]{BalkanovaChatzakosCherubiniFrolenkovLaaksonen2019} is more convenient and straightforward in our setting. The substitution of a test function as in~\cites[Equation~(2-11)]{CherubiniGuerreiro2018} is unnecessary.

In the contribution of the discrete spectrum in~\eqref{eq:Selberg-trace-formula}, the treatment of the small eigenvalues $\lambda_{f} \in [\frac{|k|}{2}(1-\frac{|k|}{2}), \frac{1}{4})$ requires particular care. For notational simplicity, we write $\mathcal{M}_{\Gamma, \pm}^{(2)}(x, \nu, \rho)$ for the sum over such Laplace eigenvalues, namely
\begin{equation}
\mathcal{M}_{\Gamma, \pm}^{(2)}(x, \nu, \rho) \coloneqq \mathop{\sum \sum}_{\substack{f \in \mathcal{L}_{k}(\Gamma, \nu) \\ t_{f} \in i(0, \frac{1-|k|}{2}]}} h_{\pm}(t_{f}).
\end{equation}
The following lemma generalises~\cites[Lemma~3.3]{CherubiniGuerreiro2018}.
\begin{lemma}\label{lem:h+-}
Keep the notation and assumptions as above. Let $t_{f} \in i(0, \frac{1-|k|}{2}]$ be the spectral parameter corresponding to the Laplace eigenvalue $\lambda_{f} \in [\frac{|k|}{2}(1-\frac{|k|}{2}), \frac{1}{4})$. Then there exists a constant $0 < \varepsilon \leq \frac{1}{4}$ depending at most on $(\Gamma, \nu)$ such that
\begin{equation}\label{eq:h+-}
h_{\pm}(t_{f}) = \frac{x^{\frac{1}{2}+|t_{f}|}}{\frac{1}{2}+|t_{f}|}+O(\delta x^{1-\frac{|k|}{2}}+x^{\frac{1}{2}-\varepsilon}).
\end{equation}
\end{lemma}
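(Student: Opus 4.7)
The plan is to compute $h_\pm(i\tau)$ explicitly for the purely imaginary spectral parameter $t_f = i\tau$ with $\tau = |t_f| \in (0, \tfrac{1-|k|}{2}]$, extracting the main term from a direct integration of the pre-convolution function $g_\rho$ and absorbing both the $\pm \delta$ truncation and the mollification by $q_\delta$ into the error.

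First I would evaluate $h_\rho(i\tau)$ in closed form. Since $g_\rho$ is even and real,
\[
h_\rho(i\tau) = 4 \int_0^\rho \sinh(u/2) \cosh(\tau u)\, du.
\]
Expanding the integrand as $\tfrac{1}{4}(e^{(1/2+\tau)u}+e^{(1/2-\tau)u}-e^{(-1/2+\tau)u}-e^{(-1/2-\tau)u})$ and integrating with $\rho = \log x$ yields
\[
h_\rho(i\tau) = \frac{x^{1/2+\tau}}{1/2+\tau} + \frac{x^{1/2-\tau}}{1/2-\tau} + \frac{x^{-1/2+\tau}}{1/2-\tau} + \frac{x^{-1/2-\tau}}{1/2+\tau} + O(1).
\]
Since the set of small Laplace eigenvalues in $[\tfrac{|k|}{2}(1-\tfrac{|k|}{2}), 1/4)$ is finite and each corresponding $\tau_f$ is strictly positive, the quantity $\varepsilon \coloneqq \min\{1/4, \min_f \tau_f\}$ is positive and depends only on $(\Gamma, \nu)$. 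Every term beyond the first is then of size $O(x^{1/2-\tau}) = O(x^{1/2-\varepsilon})$, which isolates the desired main term $x^{1/2+\tau}/(1/2+\tau)$.

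Next I would estimate the truncation error
\[
h_{\rho \pm \delta}(i\tau) - h_\rho(i\tau) = \pm 4 \int_\rho^{\rho \pm \delta} \sinh(u/2) \cosh(\tau u)\, du \ll \delta\, e^{(1/2+\tau)\rho} \ll \delta\, x^{1-|k|/2},
\]
invoking $\tau \leq \tfrac{1-|k|}{2}$ in the last step. The mollification factor satisfies $\hat{q}_\delta(i\tau) = \int q(v) e^{-\delta\tau v}\, dv = 1 + O(\delta)$ by the unit mass and compact support of $q$ together with boundedness of $\tau$. Combining these through $h_\pm(i\tau) = h_{\rho \pm \delta}(i\tau) \hat{q}_\delta(i\tau)$ and noting that the trivial bound $h_\rho(i\tau) \ll x^{1-|k|/2}$ absorbs the cross term yields the stated estimate.

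No genuine obstacle arises here: the argument is essentially bookkeeping, and the only qualitative input is finiteness of the residual discrete spectrum, which guarantees the uniform positive lower bound $\varepsilon$ on the imaginary parts $\tau_f$. The one mild subtlety is to keep the two competing errors $\delta\, x^{1-|k|/2}$ and $x^{1/2-\varepsilon}$ cleanly separated, so that neither contaminates the main term once $\delta$ is optimised later in the global argument for \cref{thm:CG}.
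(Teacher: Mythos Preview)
Your proposal is correct and follows precisely the direct computation that the paper defers to via its citation of \cite[Lemma~3.3]{CherubiniGuerreiro2018}, with the single adjustment that $\tau \leq \tfrac{1-|k|}{2}$ replaces $\tau \leq \tfrac{1}{2}$ in bounding the truncation error, which is exactly the point the paper isolates. The explicit evaluation of $h_\rho(i\tau)$, the truncation estimate, and the mollifier expansion $\hat{q}_\delta(i\tau) = 1 + O(\delta)$ are all as in the cited source, so there is nothing to add.
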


\begin{proof}
A key distinction from the proof of~\cites[Lemma~3.3]{CherubiniGuerreiro2018} is that the spectral parameter corresponding to the bottom eigenvalue $\lambda_{0} = \frac{|k|}{2}(1-\frac{|k|}{2})$ is $\frac{(1-|k|)i}{2}$ in place of $\frac{i}{2}$, affecting~the size of the first term in the error term in~\eqref{eq:h+-}. The proof of \cref{lem:h+-} is thus complete.
\end{proof}

Consequently, \cref{lem:h+-} leads to the approximation
\begin{equation}\label{eq:M+-asymptotic}
\mathcal{M}_{\Gamma, \pm}^{(2)}(x, \nu) =\mathcal{M}_{\Gamma}^{(2)}(x, \nu)+O(\delta x^{1-\frac{|k|}{2}}+x^{\frac{1}{2}}).
\end{equation}
For the other contributions, one may execute minor adjustments of~\cites[Lemmata~3.5--3.7]{CherubiniGuerreiro2018} to derive the same bounds. Furthermore, concerning the contributions from the discrete and continuous spectra, we note that~\cites[Propositions~3.10~and~3.11]{CherubiniGuerreiro2018} are attributed to~\cites[Lemma~3.8]{CherubiniGuerreiro2018}. As a substitute, the following result analogous to~\cites[Lemma~4.1]{BalkanovaChatzakosCherubiniFrolenkovLaaksonen2019} plays~a crucial role, with its proof strategy being identical to that of~\cites[Lemma~3.8]{CherubiniGuerreiro2018}.
\begin{lemma}\label{lem:Balkanova-et-al-analogue}
Keep the notation and assumptions as above. If $(t_{1}, t_{2}) \in \mathbb{R}^{2}$, then
\begin{equation}
\frac{1}{Y} \int_{X}^{X+Y} h_{\rho \pm \delta}(t_{1}) h_{\rho \pm \delta}(t_{2}) \, dx \ll \frac{X^{2}}{Y} v(t_{1}) v(t_{2}) v(|t_{1}|-|t_{2}|)+\frac{X^{\frac{3}{2}}}{Y} v(t_{1}^{2}) v(t_{2}^{2}),
\end{equation}
where $v(t) \coloneqq (1+|t|)^{-1}$, and the implicit constant is independent of $\delta$.
\end{lemma}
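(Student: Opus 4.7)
The plan is to derive an explicit closed-form expression for $h_{\rho\pm\delta}(t)$, to expand the product $h_{\rho\pm\delta}(t_{1}) h_{\rho\pm\delta}(t_{2})$ as a finite linear combination of monomials in $x$, and to estimate each resulting average over $[X,X+Y]$ via the elementary bound
\begin{equation*}
\left|\frac{1}{Y}\int_{X}^{X+Y} x^{\alpha+i\beta}\,dx\right| \ll \min\!\left(X^{\alpha},\,\frac{X^{\alpha+1}}{Y(1+|\beta|)}\right).
\end{equation*}

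First, I would compute the Fourier transform of $g_{\rho}(u)=2\sinh(|u|/2)\delta_{0\leq|u|\leq\rho}$ at $\rho=\log x$ by expanding $\cosh(\rho/2), \sinh(\rho/2), \cos(t\rho), \sin(t\rho)$ into complex exponentials, obtaining the explicit identity
\begin{equation*}
h_{\rho}(t) = \sum_{\pm}\frac{x^{1/2\pm it}+x^{-1/2\mp it}}{1/2\pm it}-\frac{2}{(1/2-it)(1/2+it)}.
\end{equation*}
The formula for $h_{\rho\pm\delta}(t)$ is identical after replacing $x$ by $xe^{\pm\delta}$; since $0<\delta<1/4$, this is a bounded rescaling of the variable, so all subsequent constants are independent of $\delta$.

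Expanding the product yields $25$ monomials $c(t_{1},t_{2})\,x^{\alpha+i\beta(t_{1},t_{2})}$ with $\alpha \in \{-1,-\tfrac{1}{2},0,\tfrac{1}{2},1\}$ and $\beta=\epsilon_{1}t_{1}+\epsilon_{2}t_{2}$ for signs $\epsilon_{j}\in\{-1,0,1\}$, which I would classify as follows. The \emph{leading terms} $(\alpha=1)$ pair two main summands $x^{1/2\pm it_{j}}/(1/2\pm it_{j})$; each of the four terms is bounded by $\frac{X^{2}}{Y}v(t_{1})v(t_{2})v(\epsilon_{1}t_{1}+\epsilon_{2}t_{2})$, and since $|\epsilon_{1}t_{1}+\epsilon_{2}t_{2}|\geq ||t_{1}|-|t_{2}||$ for every sign choice, their aggregate is absorbed into $\frac{X^{2}}{Y}v(t_{1})v(t_{2})v(|t_{1}|-|t_{2}|)$. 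The \emph{cross terms} $(\alpha=\tfrac{1}{2})$ pair a main summand for $t_{j}$ with the constant piece $-2/(1/4+t_{k}^{2})$ for $\{j,k\}=\{1,2\}$; using the algebraic inequalities $v(t)/(1+|t|)\leq v(t^{2})$ and $Y(1+t_{j}^{2})\leq Y(1+|t_{j}|)^{2}\leq X(1+|t_{j}|)$ valid when $Y(1+|t_{j}|)\leq X$, both branches of $\min(X^{1/2},\,X^{3/2}/(Y(1+|t_{j}|)))$ subordinate to $\frac{X^{3/2}}{Y}v(t_{j}^{2})v(t_{k}^{2})$. The remaining monomials with $\alpha\in\{-1,-\tfrac{1}{2},0\}$, including the constant-constant term contributing $\ll v(t_{1}^{2})v(t_{2}^{2})$, are dominated by the two claimed terms under the implicit hypothesis $Y\leq X^{3/2}$; in particular, the $\alpha=0$ main-small pairings are absorbed into the first bound via $1+|t_{1}-t_{2}|\geq 1+||t_{1}|-|t_{2}||$.

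The main technical delicacy lies in simultaneously subordinating both branches of $\min(X^{1/2},\,X^{3/2}/(Y(1+|t_{j}|)))$ to the single target $\frac{X^{3/2}}{Y}v(t_{j}^{2})$: the $X^{3/2}$-branch handles itself via $v(t_{j})/(1+|t_{j}|)\leq v(t_{j}^{2})$, while the $X^{1/2}$-branch occurs precisely when $Y(1+|t_{j}|)\leq X$, in which regime $(1+t_{j}^{2})\leq (1+|t_{j}|)^{2}$ yields $X^{1/2}v(t_{j})\leq \frac{X^{3/2}}{Y}v(t_{j}^{2})$. Beyond this algebraic bookkeeping and the explicit Fourier computation of $h_{\rho}$, the proof reduces to systematic enumeration of the $25$ monomials, uniform in the sign $\pm$ of $\delta$.
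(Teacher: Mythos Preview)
Your proposal is correct and follows precisely the standard approach that the paper invokes by citing \cite[Lemma~3.8]{CherubiniGuerreiro2018} and \cite[Lemma~4.1]{BalkanovaChatzakosCherubiniFrolenkovLaaksonen2019}: an explicit computation of $h_{\rho}(t)$, expansion of the product into the $25$ monomials $x^{\alpha+i\beta}$, and termwise estimation via the elementary bound $\frac{1}{Y}\int_{X}^{X+Y}x^{\alpha+i\beta}\,dx \ll \min(X^{\alpha},X^{\alpha+1}/(Y(1+|\beta|)))$. Your handling of the $\alpha=\tfrac12$ cross terms and the observation that the $\pm\delta$ shift only introduces bounded multiplicative factors (so the implicit constant is independent of $\delta$) are exactly the points needed, and your caveat about the implicit hypothesis $Y\le X^{3/2}$ for the constant--constant term is accurate and harmless in the intended application where $Y\le X$.
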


By~\eqref{eq:M+-asymptotic} and \cref{lem:Balkanova-et-al-analogue}, the counterpart of the first display on~\cites[Page~580]{CherubiniGuerreiro2018} reads
\begin{equation}
\frac{1}{Y} \int_{X}^{X+Y} |\Psi_{\Gamma, \pm}^{(2)}(x, \nu)-\mathcal{M}_{\Gamma}^{(2)}(x, \nu)|^{2} \, dx \ll \frac{X^{2}}{\delta Y}+\frac{X^{\frac{3}{2}}}{Y} \log^{2} \frac{1}{\delta}+\delta^{2} X^{2-|k|}.
\end{equation}
Optimising $\delta = Y^{-\frac{1}{3}} X^{\frac{|k|}{3}}$ now yields\footnote{The argument $s$ should read $\rho$ in the second display on~\cites[Page~580]{CherubiniGuerreiro2018}.}
\begin{equation}
\frac{1}{Y} \int_{X}^{X+Y} |\Psi_{\Gamma, \pm}^{(2)}(x, \nu)-\mathcal{M}_{\Gamma}^{(2)}(x, \nu)|^{2} \, dx \ll X^{\frac{4-|k|}{3}} \Big(\frac{X}{Y} \Big)^{\frac{2}{3}},
\end{equation}
as required. Now, \cref{thm:CG} is immediate from~\eqref{eq:reduction-to-+-} and~\eqref{eq:def-2}.

\section{Proof of \texorpdfstring{\cref{thm:mean-to-max}}{}}
It is now convenient to invoke \cref{def:delta-k-nu} and the notation introduced in~\eqref{eq:Hejhal} and~\eqref{eq:M}. One may suppose that there exists a constant $c > 0$ depending at most on $(\Gamma, \nu)$ such that
\begin{equation}
|\Re(\mathcal{E}_{\Gamma}^{(2)}(x, \nu))| \geq cx^{\delta_{1}^{(2)}(\nu)}
\end{equation}
holds for infinitely many $x \geq 2$ as $x \to \infty$. Without loss of generality, one may suppose that $c \geq 1$ and $\Re(\mathcal{E}_{\Gamma}^{(2)}(x, \nu))$ is nonnegative for those values of $x$. Hence, if
\begin{equation}\label{eq:if}
X \leq x \leq X+X^{\delta_{1}^{(2)}(\nu)+\frac{|k|}{2}},
\end{equation}
then the binomial expansion implies
\begin{align}
\Re(\mathcal{E}_{\Gamma}^{(2)}(x, \nu)) &= \Re(\Psi_{\Gamma}^{(2)}(x, \nu))-\mathcal{M}_{\Gamma}^{(2)}(x, \nu)\\
&\geq \mathcal{M}_{\Gamma}^{(2)}(X, \nu)+\Re(\mathcal{E}_{\Gamma}^{(2)}(X, \nu))-\mathcal{M}_{\Gamma}^{(2)}(x, \nu)\\
&\geq \mathcal{M}_{\Gamma}^{(2)}(X, \nu)+X^{\delta_{1}^{(2)}(\nu)}-\mathcal{M}_{\Gamma}^{(2)}(x, \nu) \geq 0.\label{eq:inequalities}
\end{align}
Integrating the lower bound~\eqref{eq:inequalities} with respect to $x$ yields
\begin{align}
&\frac{1}{X^{\delta_{1}^{(2)}(\nu)+\frac{|k|}{2}}} \int_{X}^{X+X^{\delta_{1}^{(2)}(\nu)+\frac{|k|}{2}}} |\mathcal{E}_{\Gamma}^{(2)}(x, \nu)|^{2} \, dx\\
& = \frac{1}{X^{\delta_{1}^{(2)}(\nu)+\frac{|k|}{2}}} \int_{X}^{X+X^{\delta_{1}^{(2)}(\nu)+\frac{|k|}{2}}} (\Re(\mathcal{E}_{\Gamma}^{(2)}(x, \nu))^{2}+\Im(\mathcal{E}_{\Gamma}^{(2)}(x, \nu))^{2}) \, dx\\
& \geq \frac{1}{X^{\delta_{1}^{(2)}(\nu)+\frac{|k|}{2}}} \int_{X}^{X+X^{\delta_{1}^{(2)}(\nu)+\frac{|k|}{2}}} (\mathcal{M}_{\Gamma}^{(2)}(X, \nu)+X^{\delta_{1}^{(2)}(\nu)}-\mathcal{M}_{\Gamma}^{(2)}(x, \nu))^{2} \, dx \gg X^{2\delta_{1}^{(2)}(\nu)}.\label{eq:integrate-lower-bound}
\end{align}
The combination of~\eqref{eq:integrate-lower-bound} and~\eqref{eq:def-2} with $Y = X^{\delta_{1}^{(2)}(\nu)+\frac{|k|}{2}}$ implies
\begin{equation}
X^{2\delta_{1}^{(2)}(\nu)} \ll_{\Gamma, \nu, \varepsilon} X^{2\delta_{2}^{(2)}(\nu)+(2-|k|)\eta_{2}^{(2)}(\nu)-2\delta_{1}^{(2)}(\nu) \eta_{2}^{(2)}(\nu)+\varepsilon},
\end{equation}
yielding \cref{thm:mean-to-max} by solving for $\delta_{1}^{(2)}(\nu)$.

\section{Proof of \texorpdfstring{\cref{thm:pointwise}}{}}\label{sect:proof}

\subsection{Specialisation to the theta multiplier}\label{subsect:specialisation}
Let $\Gamma \coloneqq \mathrm{SL}_{2}(\mathbb{Z})$, and let~\cites[Page~226]{Matthes1993-2}
\begin{align}
\Gamma_{0}(q) &\coloneqq \{\gamma \in \Gamma: \gamma \equiv \begin{psmallmatrix} \ast & \ast \\ 0 & \ast \end{psmallmatrix} \tpmod{q} \},\\
\Gamma_{\vartheta}(q) &\coloneqq \{\gamma \in \Gamma: \gamma \equiv \begin{psmallmatrix} 1 & 0 \\ 0 & 1 \end{psmallmatrix} \vee \begin{psmallmatrix} 0 & -1 \\ 1 & 0 \end{psmallmatrix} \tpmod{q} \},\\
\Gamma^{0}(q) &\coloneqq \{\gamma \in \Gamma: \gamma \equiv \begin{psmallmatrix} \ast & 0 \\ \ast & \ast \end{psmallmatrix} \tpmod{q} \},
\end{align}
then the theta functions of significance to the proof of \cref{thm:pointwise} are given by
\begin{equation}
\vartheta_{2}(z) \coloneqq \sum_{n \in \mathbb{Z}} e^{\pi i(n+\frac{1}{2})^{2} z}, \qquad \vartheta_{3}(z) \coloneqq \sum_{n \in \mathbb{Z}} e^{\pi in^{2} z}, \qquad \vartheta_{4}(z) \coloneqq \vartheta_{3}(z+1),
\end{equation}
which are holomorphic modular forms of weight $\frac{1}{2}$ and multiplier systems $\nu_{2}$, $\nu_{3}$, $\nu_{4}$ on $\Gamma_{0}(2)$, $\Gamma_{\vartheta}(2)$, $\Gamma^{0}(2)$, respectively~\cite[Theorem~7.1.3]{Rankin1977}. Following~\cites[Equation~(2.9)]{Selberg1965}, if one forms a vector $\Theta(z) \coloneqq (\vartheta_{2}(z), \vartheta_{3}(z), \vartheta_{4}(z))^{\mathrm{T}}$, then it is a holomorphic modular form of weight $\frac{1}{2}$ on $\Gamma$, and the corresponding multiplier system $\nu_{\Theta}: \Gamma \mapsto \mathrm{Aut}(\mathbb{C}^{3})$ is called the $3$-fold theta multiplier. By routine considerations, it evaluates to
\begin{equation}\label{eq:nu-theta}
\nu_{\Theta}(\gamma) = \rho(\gamma) \Bigg(\begin{matrix} \nu_{2}^{\ast}(\gamma) & 0 & 0 \\ 0 & \nu_{3}^{\ast}(\gamma) & 0 \\ 0 & 0 & \nu_{4}^{\ast}(\gamma) \end{matrix} \Bigg),
\end{equation}
where $\nu_{i}^{\ast}$ denote certain extensions of $\nu_{i}$ to $\Gamma$, and $\rho$ denotes a unitary representation (or a permutation matrix) of $\mathrm{SL}_{2}(\mathbb{F}_{2})$ in $\mathbb{C}^{3}$ determined by its values on the generators
\begin{equation}
\rho \Big(\begin{matrix} 1 & 1 \\ 0 & 1 \end{matrix} \Big) = \Bigg(\begin{matrix} 1 & 0 & 0 \\ 0 & 0 & 1 \\ 0 & 1 & 0 \end{matrix} \Bigg), \qquad 
\rho \Big(\begin{matrix} 0 & -1 \\ 1 & 0 \end{matrix} \Big) = \Bigg(\begin{matrix} 0 & 0 & 1 \\ 0 & 1 & 0 \\ 1 & 0 & 0 \end{matrix} \Bigg).
\end{equation}
It is well known that $\nu_{\Theta}$ is irreducible. Automorphy~\eqref{eq:automorphy} shows that $\Theta(z)$ satisfies
\begin{equation}
\Theta(\gamma z) = (cz+d)^{\frac{1}{2}} \nu_{\Theta}(\gamma) \Theta(z).
\end{equation}
Furthermore, each multiplier system $\nu_{i}$ admits an explicit realisation~\cite[Satz~E.~2]{Petersson1982}
\begin{equation}
\begin{alignedat}{2}
\nu_{2}(\gamma) &= \Big(\frac{c}{d} \Big) e \Big(\frac{d-1+bd}{8} \Big), && \qquad \gamma \in \Gamma_{0}(2),\\
\nu_{3}(\gamma) &= \Big(\frac{c}{d} \Big) e \Big(\frac{d-1}{8} \Big), && \qquad \gamma \equiv \Big(\begin{matrix} 1 & 0 \\ 0 & 1 \end{matrix} \Big) \tpmod{2},\\
\nu_{3}(\gamma) &= \Big(\frac{d}{c} \Big) e \Big(-\frac{c}{8} \Big), && \qquad \gamma \equiv \Big(\begin{matrix} 0 & -1 \\ 1 & 0 \end{matrix} \Big) \tpmod{2},\\
\nu_{4}(\gamma) &= \nu_{3} \Big(\Big(\begin{matrix} 1 & 1 \\ 0 & 1 \end{matrix} \Big)^{-1} \gamma \Big(\begin{matrix} 1 & 1 \\ 0 & 1 \end{matrix} \Big) \Big), && \qquad \gamma \in \Gamma^{0}(2),
\end{alignedat}
\end{equation}
where $\gamma = \begin{psmallmatrix} a & b \\ c & d \end{psmallmatrix} \in \Gamma$, and $(\frac{\cdot}{\cdot})$ stands for the extended Jacobi symbol~\cites[Page~442--443]{Shimura1973}.

\subsection{Shimura correspondence}
The following identification plays a key role in the~proof of \cref{thm:pointwise}. The circumstance differs from the case of the  standard multiplier system on $\Gamma_{0}(4)$ where restriction to the Kohnen plus space proves indispensable~\cites[Theorem~1.2]{BaruchMao2010}.
\begin{theorem}\label{thm:Shimura-vector-valued}
Keep the notation as above. There exists a bijective correspondence between $f \in \mathcal{L}_{0}(\Gamma, \mathbbm{1})$ and $\tilde{f} \in \mathcal{L}_{1/2}(\Gamma, \nu_{\Theta})$. In particular, we have that $s_{f}-\frac{1}{2} = 2(s_{\tilde{f}}-\frac{1}{2})$.
\end{theorem}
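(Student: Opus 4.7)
The plan is to establish the bijection via the classical Shimura correspondence, adapted to the vector-valued setting through the explicit structure of the $3$-fold theta multiplier. The key observation is the decomposition~\eqref{eq:nu-theta}, which factors $\nu_{\Theta}(\gamma)$ as the permutation representation $\rho(\gamma)$ of $\mathrm{SL}_{2}(\mathbb{F}_{2}) \cong S_{3}$ acting on the three cosets $\{\Gamma_{0}(2), \Gamma_{\vartheta}(2), \Gamma^{0}(2)\}$, composed with the diagonal of the scalar theta multipliers $\nu_{2}^{\ast}$, $\nu_{3}^{\ast}$, $\nu_{4}^{\ast}$. This renders the vector-valued spectrum on $\Gamma$ equivalent to a scalar spectrum on any one of these conjugate index-$3$ subgroups.

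First, I would reduce to scalar theory. Since $\rho$ is the permutation representation of $S_{3}$ induced from the trivial character of an index-$3$ stabiliser, Frobenius reciprocity supplies a canonical isometry
\begin{equation*}
\mathcal{L}_{1/2}(\Gamma, \nu_{\Theta}) \xrightarrow{\sim} \mathcal{L}_{1/2}(\Gamma_{\vartheta}(2), \nu_{3}),
\end{equation*}
where $\tilde{f} \in \mathcal{L}_{1/2}(\Gamma, \nu_{\Theta})$ is sent to its middle component $\tilde{f}^{(2)}$ and the inverse is realised by slashing against fixed representatives of $\Gamma_{\vartheta}(2) \backslash \Gamma$. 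This map commutes with $\Delta_{1/2}$ and therefore preserves spectral parameters.

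Second, I would invoke the scalar half-integral weight Shimura correspondence (in the Maa\ss{} form framework of Katok--Sarnak and Baruch--Mao), which furnishes a Hecke-equivariant bijection between Maa\ss{} cusp forms of weight $\tfrac{1}{2}$ with the theta multiplier on a suitable level-$4$ congruence subgroup and Maa\ss{} cusp forms of weight $0$ on $\Gamma$. Up to passage between $\Gamma_{\vartheta}(2)$ and $\Gamma_{0}(4)$, the relation $s_{f} - \tfrac{1}{2} = 2(s_{\tilde{f}} - \tfrac{1}{2})$ reflects the squaring of the Whittaker parameter under the Shintani--Shimura integral kernel, mirroring the relation $s_{f} - 1 = \pm 3(s_{\tilde{f}} - 1)$ that appears later for the cubic metaplectic cover. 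The vector-valued packaging on $\Gamma$ plays the role of the Kohnen plus-space restriction in the scalar $\Gamma_{0}(4)$ theory, isolating precisely the half-integral weight forms whose lifts descend to the full modular group.

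The main obstacle is verifying that the composite map is genuinely bijective---equivalently, that the Hecke eigensystems attached to the two spaces match under the stated spectral relation, consistently on the cuspidal, residual, and Eisenstein components. A clean verification proceeds through the theta correspondence for the dual pair $(\widetilde{\mathrm{SL}_{2}}, \mathrm{PGL}_{2})$, in which the Howe duality principle identifies genuine representations of the metaplectic group $\widetilde{\mathrm{SL}_{2}(\mathbb{R})}$ (whose automorphic realisation yields the half-integral weight spectrum with multiplier $\nu_{\Theta}$) with representations of $\mathrm{PGL}_{2}(\mathbb{R})$ (corresponding to weight $0$ Maa\ss{} forms on $\Gamma$), the spectral identification being a direct consequence of the transfer of infinitesimal characters. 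At a gross level the matching is further corroborated by comparing leading terms of Weyl's law (\cref{lem:Weyl-general}) on both sides under the substitution $t_{\tilde{f}} = t_{f}/2$.
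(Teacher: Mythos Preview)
Your approach is more explicit than the paper's, which dispatches the theorem in a single sentence by citing the Eichler--Zagier and Shimura correspondences and observing that the vector-valued packaging realises the lift directly at level~$1$ (so that no Kohnen plus-space restriction is required). Reducing to scalar theory via the induced-representation structure of $\nu_{\Theta}$ and then invoking the scalar half-integral Shimura lift is a legitimate alternative, and your identification $\mathcal{L}_{1/2}(\Gamma,\nu_{\Theta})\cong\mathcal{L}_{1/2}(\Gamma_{\vartheta}(2),\nu_{3})$ is correct, since $\nu_{\Theta}$ is precisely the multiplier system induced from $\nu_{3}$ on the index-$3$ subgroup $\Gamma_{\vartheta}(2)$.

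There is, however, an internal tension you have not resolved. Having passed via Frobenius to the \emph{full} scalar space on $\Gamma_{\vartheta}(2)$, you then assert that the vector-valued framework ``plays the role of the Kohnen plus-space restriction''. But the scalar Shimura correspondence you cite (Katok--Sarnak, Baruch--Mao) is formulated on $\Gamma_{0}(4)$, where the full space lifts to $\Gamma_{0}(2)$ and only the plus space lifts to $\Gamma=\mathrm{SL}_{2}(\mathbb{Z})$. You need to verify that $\mathcal{L}_{1/2}(\Gamma_{\vartheta}(2),\nu_{3})$, transported to $\Gamma_{0}(4)$ via the scaling $z\mapsto 2z$ relating $\vartheta_{3}(z)$ to the standard theta $\theta(z)=\vartheta_{3}(2z)$, lands exactly on the Kohnen plus space rather than on the full space; otherwise your composite map targets $\mathcal{L}_{0}(\Gamma_{0}(2),\mathbbm{1})$ rather than $\mathcal{L}_{0}(\Gamma,\mathbbm{1})$. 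This identification is true (it is essentially the content of the Eichler--Zagier isomorphism between Jacobi forms of index~$1$ and the plus space), but it is precisely the point the paper absorbs into its citation, and your argument as written leaves it unproved. The cleanest fix is to work directly with the Weil-representation / Jacobi-form formulation from the outset, which is what the paper does.
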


\begin{proof}
The assertion holds because the Eichler--Zagier correspondence~\cites{EichlerZagier1985} or the Shimura correspondence~\cites{Shimura1973} is given in such a way that the lifting process involving the $3$-fold~theta multiplier preserves the Fourier--Whittaker coefficients with the required normalisation of the spectral parameters by the factor of $2$ across the entire space of vector-valued modular forms under consideration.
\end{proof}

\subsection{Arithmetic explicit formula}
For completeness, we present \cref{prop:spectral-explicit-formula} tailored to the $3$-fold theta multiplier $\nu_{\Theta}$, thereby reducing the analysis of the prime geodesic theorem to estimating nontrivially the spectral exponential sum of half-integral weight.
\begin{proposition}\label{prop:explicit-formula}
Keep the notation as above. Then we have for any $1 \leq T \leq x^{\frac{1}{4}}$ that
\begin{equation}
\mathcal{E}_{\Gamma}^{(2)}(x, \nu_{\Theta}) = \sum_{\pm} \mathop{\sum \sum}_{\substack{\tilde{f} \in \mathcal{L}_{1/2}(\Gamma, \nu_{\Theta}) \\ 0 < t_{\tilde{f}} \leq T}} \frac{x^{\frac{1}{2} \pm it_{\tilde{f}}}}{\frac{1}{2} \pm it_{\tilde{f}}}+O_{\varepsilon} \Big(\frac{x^{\frac{3}{4}+\varepsilon}}{T} \Big).
\end{equation}
\end{proposition}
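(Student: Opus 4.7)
The plan is to specialise \cref{prop:spectral-explicit-formula} to the pair $(\Gamma, \nu) = (\mathrm{SL}_{2}(\mathbb{Z}), \nu_{\Theta})$ with $k = \frac{1}{2}$. Substituting $|k| = \frac{1}{2}$ immediately produces the error term $O_{\varepsilon}(x^{3/4+\varepsilon}/T)$ and the admissible range $1 \leq T \leq x^{1/4}$ asserted in \cref{prop:explicit-formula}. The only substantive point to verify is that the sum over nontrivial zeros $\rho$ of the scattering determinant appearing in~\eqref{eq:spectral-explicit-formula} is vacuous in this setting, so that all that survives on the right-hand side is the spectral oscillatory sum over the nonzero spectral parameters $t_{\tilde{f}}$.

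To this end, I would examine the cusp structure of the pair $(\Gamma, \nu_{\Theta})$. Since $\Gamma$ possesses a unique equivalence class of cusps, represented by $\infty$, whose stabiliser modulo the centre is generated by $T_{0} \coloneqq \begin{psmallmatrix} 1 & 1 \\ 0 & 1 \end{psmallmatrix}$, it suffices to inspect the matrix $\nu_{\Theta}(T_{0})$ and decide whether each of its eigenvalues equals $1$. Using the classical transformations $\vartheta_{2}(z+1) = e^{\pi i/4} \vartheta_{2}(z)$, $\vartheta_{3}(z+1) = \vartheta_{4}(z)$, and $\vartheta_{4}(z+1) = \vartheta_{3}(z)$ together with $j(T_{0}, z) = 1$, one finds that the eigenvalues of $\nu_{\Theta}(T_{0})$ are $e^{\pi i/4}$ and $\pm 1$. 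Hence at least one of the quantities $\kappa_{\infty}^{(\ell)} \in [0, 1)$ defined in~\eqref{eq:kappa-2} is nonzero, so the cusp $\infty$ fails to be essential with respect to $\nu_{\Theta}$. Consequently, $(\Gamma, \nu_{\Theta})$ admits no Eisenstein series, the scattering matrix collapses to the empty $0 \times 0$ object, and both the pole at $s = \frac{1}{2}$ in \cref{lem:Hejhal-formal} and the sum over $\rho$ are trivially absent.

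With the residual spectrum thus disposed of, the contour shift performed in the proof of \cref{prop:spectral-explicit-formula} goes through verbatim, and substituting $k = \frac{1}{2}$ at each occurrence yields the formula claimed in \cref{prop:explicit-formula}. I should emphasise that \cref{thm:Shimura-vector-valued} is not invoked at this stage; its role becomes central only at the next step, where the surviving spectral exponential sum over $t_{\tilde{f}}$ must be rewritten via the Shimura lift to an exponential sum over integer-weight spectral parameters on $\mathrm{SL}_{2}(\mathbb{Z})$ in order to feed the sum into arithmetic input. The only potential obstacle in the proof of \cref{prop:explicit-formula} itself is the cusp verification above, which is essentially a three-line computation with the Jacobi theta functions; the genuinely deep ingredients needed for \cref{thm:pointwise} are quarantined in the subsequent estimation of the remaining spectral exponential sum in the spirit of \cites[Theorem~1.4]{LuoSarnak1995}.
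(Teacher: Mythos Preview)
Your overall plan---specialise \cref{prop:spectral-explicit-formula} to $(\Gamma,\nu_{\Theta})$ with $|k|=\tfrac12$, read off the range $T\le x^{1/4}$ and the error $O_\varepsilon(x^{3/4+\varepsilon}/T)$, and then argue that the $\rho$--sum drops out---is exactly the skeleton of the paper's proof. The difference lies in how the $\rho$--sum is eliminated.

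Your elimination step has a gap. You invoke the paper's definition that a cusp is \emph{essential} only when \emph{every} $\kappa_\infty^{(\ell)}=0$, and conclude from the eigenvalues $e^{\pi i/4},1,-1$ of $\nu_\Theta\!\left(\begin{smallmatrix}1&1\\0&1\end{smallmatrix}\right)$ that there is no continuous spectrum at all. But that reading is too literal: in Hejhal's vector--valued framework (which is what \cref{prop:spectral-explicit-formula} is built on) the continuous spectrum is indexed by cusp--component pairs $(\mathfrak a,\ell)$ with $\kappa_{\mathfrak a}^{(\ell)}=0$, and here the eigenvalue $+1$ gives exactly one such pair. So there \emph{is} a one--dimensional continuous spectrum for $(\Gamma,\nu_\Theta)$, a genuine scattering determinant $\varphi(s,\nu_\Theta)$, and hence a nonempty set of zeros $\rho$ to account for. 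Your three--line theta computation shows that the cusp is not essential in the strict sense, but it does not make the Eisenstein contribution disappear.

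The paper does not argue the $\rho$--sum is vacuous; instead it appeals to \cites[Theorem~2.15]{Bruggeman1986} and the penultimate display on \cites[Page~174]{Matthes1994}, where $\varphi(s,\nu_\Theta)$ is identified explicitly (as a ratio of classical $L$--factors), so that the zeros $\rho$ lie on $\Re(\rho)=\tfrac12$ and their contribution $\sum_{0<|\Im(\rho)|\le T} x^{1-\rho}/(1-\rho)$ is absorbed into $O_\varepsilon(x^{3/4+\varepsilon}/T)$ by the density of these zeros. Replace your ``no Eisenstein series'' paragraph with that input and your proof goes through; as written, the crucial step rests on a non-claim.
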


\begin{proof}
The claim follows from \cref{prop:spectral-explicit-formula},~\cites[Theorem~2.15]{Bruggeman1986}, and the penultimate display on~\cites[Page~174]{Matthes1994}.
\end{proof}

\cref{prop:explicit-formula} differs from~\cites[Proposition~3.3]{Matthes1994}, since it handles a modified counting function \cites[Equation~(11)]{Matthes1994} to maintain consistency with the notation of Hejhal~\cites[Definition~3.2]{Hejhal1983}. Nonetheless, partial summation guarantees a seamless transition between the two, with no alteration in the main term required. Moreover, we highlight that the limitation of~\cites[Proposition~3.3]{Matthes1994} constitutes the suboptimal lower bound $\delta_{1}^{(2)}(\nu_{\Theta}) \geq \frac{9}{16}$.

\begin{proposition}\label{prop:Luo-Sarnak-analogue}
Let $T, X \geq 2$. Then
\begin{equation}
\mathop{\sum \sum}_{\substack{\tilde{f} \in \mathcal{L}_{1/2}(\Gamma, \nu_{\Theta}) \\ 0 < t_{\tilde{f}} \leq T}} X^{it_{\tilde{f}}} \ll_{\varepsilon} T^{\frac{5}{4}} X^{\frac{1}{16}}(\log X)^{2}.
\end{equation}
\end{proposition}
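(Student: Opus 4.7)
The plan is to reduce the half-integral weight spectral exponential sum to its weight~$0$ counterpart via the Shimura correspondence, and then invoke the classical bound of Luo and Sarnak.

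First, I would apply \cref{thm:Shimura-vector-valued}, which furnishes a bijection $\mathcal{L}_{1/2}(\Gamma, \nu_{\Theta}) \leftrightarrow \mathcal{L}_{0}(\Gamma, \mathbbm{1})$ under which the spectral parameters transform as $t_{f} = 2t_{\tilde{f}}$. Applying this termwise converts the sum of interest into
\begin{equation*}
\mathop{\sum \sum}_{\substack{\tilde{f} \in \mathcal{L}_{1/2}(\Gamma, \nu_{\Theta}) \\ 0 < t_{\tilde{f}} \leq T}} X^{it_{\tilde{f}}} = \mathop{\sum \sum}_{\substack{f \in \mathcal{L}_{0}(\Gamma, \mathbbm{1}) \\ 0 < t_{f} \leq 2T}} (X^{1/2})^{it_{f}}.
\end{equation*}
Note that residual forms on either side (constant function for weight~$0$, theta residues for weight~$\frac{1}{2}$) contribute purely imaginary spectral parameters and are excluded from the range. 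At this stage I would invoke the spectral exponential sum bound of Luo and Sarnak~\cites[Theorem~1.4]{LuoSarnak1995} in the weight~$0$ setting, which asserts that for all $T, Y \geq 2$,
\begin{equation*}
\mathop{\sum \sum}_{\substack{f \in \mathcal{L}_{0}(\Gamma, \mathbbm{1}) \\ 0 < t_{f} \leq T}} Y^{it_{f}} \ll T^{5/4} Y^{1/8} (\log Y)^{2}.
\end{equation*}
Specialising with $(T, Y) \mapsto (2T, X^{1/2})$ produces $(2T)^{5/4} (X^{1/2})^{1/8} (\log X^{1/2})^{2} \ll T^{5/4} X^{1/16} (\log X)^{2}$, which is the claim.

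The analytic input is thus off-the-shelf, and the main concern is rather verifying that the Shimura correspondence yields a genuine equality of exponential sums rather than a proper inclusion. Since $\nu_{\Theta}$ is three-dimensional, I must ensure that the vector-valued lifting sends each cuspidal Maa{\ss} form on $\Gamma$ to a single form in $\mathcal{L}_{1/2}(\Gamma, \nu_{\Theta})$ with multiplicity preserved, thereby obviating the restriction to a Kohnen plus space that would otherwise be required in the classical $\Gamma_{0}(4)$ formulation. This is precisely the content of \cref{thm:Shimura-vector-valued}, so the reduction incurs no loss. A minor technicality is that Luo--Sarnak's bound is stated for an orthonormal basis of Hecke--Maa{\ss} cusp forms on $\mathrm{SL}_{2}(\mathbb{Z})$; since full-level multiplicity one guarantees that the entire cuspidal basis may be so chosen, the bound applies to the whole spectral sum without alteration.
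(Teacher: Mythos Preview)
Your proof is correct and follows the same route as the paper, which simply invokes \cref{thm:Shimura-vector-valued} to effect the change of variables $X \mapsto X^{1/2}$ (and implicitly $T \mapsto 2T$) in the Luo--Sarnak spectral exponential sum bound. One minor point: the relevant input from~\cites{LuoSarnak1995} is Equation~(58) rather than Theorem~1.4, the latter being the prime geodesic exponent $\tfrac{7}{10}$ derived from the former.
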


\begin{proof}
\cref{thm:Shimura-vector-valued} ensures the change of variables $X \mapsto X^{\frac{1}{2}}$ in~\cites[Equation~(58)]{LuoSarnak1995}.
\end{proof}

\begin{proof}[Proof of \cref{thm:pointwise}]
It follows from \cref{prop:explicit-formula,prop:Luo-Sarnak-analogue} and partial summation that
\begin{equation}
\mathcal{E}_{\Gamma}^{(2)}(x, \nu_{\Theta}) \ll_{\varepsilon} T^{\frac{1}{4}} x^{\frac{9}{16}+\varepsilon}+\frac{x^{\frac{3}{4}+\varepsilon}}{T}.
\end{equation}
Optimising $T = x^{\frac{3}{20}}$ now justifies the desideratum.
\end{proof}

\section{Proof of \texorpdfstring{\cref{thm:main-metaplectic}}{}}
In preparation for the proof of \cref{thm:main-metaplectic} in \cref{subsect:proof-of-theorem-1-7}, which is of an elementary nature, \cref{subsect:group-action,subsect:cusp,subsect:Shimura-2} provide an overview of fundamental aspects of the metaplectic theory, with particular emphasis on the Shimura correspondence. Note that metaplectic forms themselves are largely extraneous to our discussion, as the metaplectic explicit formula (\cref{prop:explicit-formula-metaplectic}) does not involve Fourier coefficients but rather depends only on the spectral parameters. The interested reader is directed to~\cites{Kubota1968}{Kubota1969} for the theoretical background.

\subsection{Group action and Laplacian}\label{subsect:group-action}
Let $\mathbb{H}_{3} \coloneqq \mathbb{C} \times \mathbb{R}_{+}^{\times}$ denote the quaternionic hyperbolic space. One embeds $\mathbb{C}$ and $\mathbb{H}_{3}$ in the Hamilton quaternions by identifying $i = \sqrt{-1}$ with $\hat{i}$ and $w = (z(w), v(w)) = (z, v) = (x+iy, v) \in \mathbb{H}_{3}$ with $x+y \hat{i}+v \hat{j}$, where $\{1, \hat{i}, \hat{j}, \hat{k} \}$ denotes the~set of the unit quaternions. A typical point $w = (z, v) \in \mathbb{H}_{3}$ is represented by $w = \begin{psmallmatrix} z & -v \\ v & \overline{z} \end{psmallmatrix}$, and a complex number $u \in \mathbb{C}$ is represented by $\tilde{u} = \begin{psmallmatrix} u & 0 \\ 0 & \overline{u} \end{psmallmatrix}$. The discontinuous action of $\mathrm{SL}_{2}(\mathbb{C})$~on $\mathbb{H}_{3}$ in quaternion arithmetic is then given by linear fractional transformations
\begin{equation}\label{eq:linear-fractional}
\gamma w = (\tilde{a} w+\tilde{b})(\tilde{c} w+\tilde{d})^{-1}, \qquad \gamma = \Big(\begin{matrix} a & b \\ c & d \end{matrix} \Big) \in \mathrm{SL}_{2}(\mathbb{C}), \qquad w \in \mathbb{H}_{3}.
\end{equation}
In coordinates, this map sends
\begin{equation}
\gamma w = \Big(\frac{(az+b) \overline{(cz+d)}+a \overline{c} v^{2}}{|cz+d|^{2}+|c|^{2} v^{2}}, \frac{v}{|cz+d|^{2}+|c|^{2} v^{2}} \Big), \qquad w = (z, v).
\end{equation}
A natural left invariant metric on $\mathbb{H}_{3}$ is given by $ds^{2} \coloneqq v^{-2}(dx^{2}+dy^{2}+dv^{2})$, along with~the corresponding volume element $d\mu(w) \coloneqq v^{-3} \, dx \, dy \, dv$. Furthermore, the hyperbolic Laplacian on $\mathbb{H}_{3}$ is given by
\begin{equation}\label{eq:Laplacian-3-dim}
\Delta \coloneqq v^{2} \Big(\frac{\partial^{2}}{\partial x^{2}}+\frac{\partial^{2}}{\partial y^{2}}+\frac{\partial^{2}}{\partial v^{2}} \Big)-v \frac{\partial}{\partial v}.
\end{equation}

\subsection{Metaplectic cusp forms}\label{subsect:cusp}
Let $\Gamma^{\prime} \subseteq \Gamma \coloneqq \mathrm{PSL}_{2}(\omega)$ denote a subgroup with finite~index, and let $\chi: \Gamma^{\prime} \rightarrow \mathbb{C}^{\times}$ be a unitary character satisfying $\chi(-I) = 1$ if $-I \in \Gamma^{\prime}$. If we set
\begin{equation}
\mathcal{A}(\Gamma^{\prime}, \chi) \coloneqq \{f: \mathbb{H}_{3} \rightarrow \mathbb{C}: \text{$f(\gamma w) = \chi(\gamma) f(w)$ for all $\gamma \in \Gamma^{\prime}$ and $w \in \mathbb{H}_{3}$} \},
\end{equation}
then a function $f \in \mathcal{A}(\Gamma^{\prime}, \chi)$ is said to be an automorphic form under $\Gamma^{\prime}$ with character $\chi$ if
\begin{itemize}
\item $f \in C^{\infty}(\mathbb{H}_{3})$ and is an eigenfunction of the Laplacian~\eqref{eq:Laplacian-3-dim}, namely
\begin{equation}
\Delta f = -\lambda_{f} f, \qquad \lambda_{f} = s_{f}(2-s_{f}),
\end{equation}
where the quantity $s_{f} \in \mathbb{C}$ is referred to as the spectral parameter of $f$;
\item $f$ has moderate growth at cusps, namely there exists a constant $A > 0$ such that
\begin{equation}
|f(w)| < (v+(1+|z|^2) v^{-1} )^{A}, \qquad w = (z, v) \in \mathbb{H}_{3}.
\end{equation}
\end{itemize}

Let $\mathcal{L}(\Gamma^{\prime}, \chi)$ denote the finite-dimensional Hilbert space of square-integrable automorphic forms under $\Gamma^{\prime}$ with character $\chi$, with the norm $\norm{\cdot}_{2}$ induced by the Petersson inner product
\begin{equation}
\langle f, g \rangle \coloneqq \int_{\Gamma^{\prime} \backslash \mathbb{H}_{3}} f(w) \overline{g(w)} \, d\mu(w).
\end{equation}
Two automorphic forms are identified if they are constant multiples of one another.

\subsection{Generalised Shimura correspondence}\label{subsect:Shimura-2}
Let the notation be as above, and let~$\chi_{3}(\gamma)$ denote the cubic Kubota character defined by~\eqref{eq:Kubota-symbol}. Furthermore, let $\Gamma_{2} \coloneqq \langle \mathrm{PSL}_{2}(\mathbb{Z}), \Gamma_{1}(3) \rangle$.
\begin{theorem}[{Cf.~\cites[Theorem~2.9]{Louvel2008}[Theorem~3.4]{Patterson1998}}]\label{thm:Shimura}
Keep the notation as above. There exists a bijective correspondence between $f \in \mathcal{L}(\Gamma, \mathbbm{1})$ and $\tilde{f} \in \mathcal{L}(\Gamma_{2}, \chi_{3})$. In particular, we have that $s_{f}-1 = \pm 3(s_{\tilde{f}}-1)$.
\end{theorem}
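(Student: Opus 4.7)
The plan is to invoke the classical construction due to Flicker and Patterson, which realises the generalised Shimura correspondence through an explicit theta lift built from the cubic theta function $\vartheta_{3} \in \mathcal{L}(\Gamma_{2}, \chi_{3}, \frac{4}{3})$ introduced in \cref{subsect:Shimura}. Since the statement is already recorded in the references \cites[Theorem~2.9]{Louvel2008}[Theorem~3.4]{Patterson1998}, the task is to sketch the main construction and, crucially, to justify the scaling factor of $3$ between the spectral parameters.

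First, I would construct the theta kernel $K(w, w')$ on $(\Gamma \backslash \mathbb{H}_{3}) \times (\Gamma_{2} \backslash \mathbb{H}_{3})$ of Howe--Piatetski-Shapiro type, built by averaging products of $\vartheta_{3}$ over a suitable symmetric-space variable. Integration of $\tilde{f}(w') \in \mathcal{L}(\Gamma_{2}, \chi_{3})$ against $K(w, w')$ produces an automorphic function on $\Gamma \backslash \mathbb{H}_{3}$, and the inverse direction follows from the adjointness of the kernel. One verifies that this lift intertwines the Laplacians $\Delta_{\Gamma_{2}}$ and $\Delta_{\Gamma}$ up to the normalisation dictated by the spectral parameter of $\vartheta_{3}$, and that the kernel is nondegenerate on the cuspidal and residual components, so that the induced map is injective on each isotypic subspace. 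Bijectivity would then follow by dimension count via the metaplectic trace formula of Kubota, matching conjugacy classes on both sides.

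Second, to extract the spectral parameter relation $s_{f}-1 = \pm 3(s_{\tilde{f}}-1)$, I would compute the action of $\Delta_{\Gamma}$ on the theta kernel. The key point is that $\vartheta_{3}$ is itself the Shimura lift of the constant function $1 \in \mathcal{L}(\Gamma, \mathbbm{1}, 2)$; since the residual spectral parameters on the two sides are $s = \frac{4}{3}$ and $s = 2$ respectively, and both correspond to the "trivial" representation in their respective metaplectic packets, one obtains the base identity $2 - 1 = 3(\frac{4}{3} - 1)$. The factor $3$ then propagates linearly to the entire spectrum because the theta kernel is an eigenfunction of the Casimir on each factor with proportional eigenvalues governed by the $3$-fold covering, and the sign ambiguity $\pm$ reflects the invariance of the Laplace eigenvalue $s(2-s)$ under $s \mapsto 2-s$.

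The main obstacle, as usual in the metaplectic setting, lies in establishing the absolute convergence and cuspidality preservation of the theta lift, together with the precise computation that the Casimir acts on $K(w, w')$ with the scaling $s \mapsto 1 + 3(s-1)$. This technical heart is supplied by the orbital integral analysis of Flicker \cite{Flicker1980}, whose matching of stable orbital integrals on $\mathrm{GL}_{2}$ and its cubic metaplectic cover yields, via the trace formula comparison, exactly the asserted bijection at the level of Hilbert spaces. Since our application in \cref{subsect:proof-of-theorem-1-7} requires only the spectral parameter identification and not the explicit Fourier coefficient correspondence, appealing to the cited results of Louvel and Patterson suffices.
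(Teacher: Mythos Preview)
The paper does not prove \cref{thm:Shimura} at all; it merely records the statement with a ``Cf.'' citation to \cites[Theorem~2.9]{Louvel2008}[Theorem~3.4]{Patterson1998} and adds the one-line remark that the former reference works in the broader setting of \cites[Theorem~5.3]{Flicker1980}. Your proposal therefore goes well beyond the paper by sketching the theta-kernel mechanism and the trace-formula comparison behind those references. The sanity check $2-1 = 3(\tfrac{4}{3}-1)$ on the residual line and the observation that the $\pm$ reflects $s \leftrightarrow 2-s$ are both apt, and invoking Flicker's matching of orbital integrals for the bijectivity is the right pointer.

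That said, a few of your intermediate claims are heuristic rather than rigorous as stated. The assertion that the factor $3$ ``propagates linearly to the entire spectrum because the theta kernel is an eigenfunction of the Casimir on each factor with proportional eigenvalues'' is not how the argument actually runs in \cites{Flicker1980}{Patterson1998}: the scaling is established archimedean-locally via the explicit correspondence of principal series representations under the $n$-fold cover, not by a global eigenvalue computation on a single kernel. Likewise, ``bijectivity by dimension count via the metaplectic trace formula of Kubota'' understates what is needed; it is Flicker's comparison of the full trace formulae (stabilised orbital integrals on both sides) that yields the bijection, and Kubota's work supplies the Eisenstein theory rather than the cuspidal matching. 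Since the paper is content to cite the result, your level of detail is already more than sufficient for present purposes; if you wish to keep the sketch, it would be safer to phrase the spectral-parameter step as a local archimedean computation rather than a global kernel eigenvalue claim.
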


The former reference pertains to a broader setting in the spirit of~\cites[Theorem~5.3]{Flicker1980}.

\subsection{Metaplectic explicit formula}\label{subsect:proof-of-theorem-1-7}
The following explicit formula follows in an identical manner from the standard contour shift argument as in the proof of \cref{prop:spectral-explicit-formula,prop:explicit-formula}; cf.~\cites[Section~5]{Nakasuji2000}. More specifically, the argument relies on the Selberg trace formula of Friedman~\cites[Theorem~3.1]{Friedman2005}[Theorem~4.1.1]{Friedman2005-2}, which holds for any cofinite Kleinian group $\Gamma$ and admits twisting by any unitary representation $\chi$ of $\Gamma$. 
\begin{proposition}\label{prop:explicit-formula-metaplectic}
Keep the notation as above. Then we have for any $1 \leq T \leq x^{\frac{1}{6}}$ that
\begin{equation}\label{eq:explicit-formula-metaplectic}
\mathcal{E}_{\Gamma_{2}}^{(3)}(x, \chi_{3}) = \sum_{\pm} \mathop{\sum \sum}_{\substack{\tilde{f} \in \mathcal{L}(\Gamma_{2}, \chi_{3}) \\ 0 < t_{\tilde{f}} \leq T}} \frac{x^{1 \pm it_{\tilde{f}}}}{1 \pm it_{\tilde{f}}}+O_{\varepsilon} \Big(\frac{x^{\frac{4}{3}+\varepsilon}}{T} \Big).
\end{equation}
\end{proposition}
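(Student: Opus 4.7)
The plan is to mirror the contour-shift argument used in the proofs of \cref{prop:spectral-explicit-formula,prop:explicit-formula}, now applied to the metaplectic Ruelle-type zeta function on $\Gamma_{2} \backslash \mathbb{H}_{3}$ attached to the cubic Kubota character $\chi_{3}$. First I would invoke Perron's formula to write
\begin{equation}
\Psi_{\Gamma_{2}}^{(3)}(x, \chi_{3}) = \int_{\sigma-iT}^{\sigma+iT} \frac{\mathcal{R}_{\Gamma_{2}}^{\prime}}{\mathcal{R}_{\Gamma_{2}}}(s, \chi_{3}) \frac{x^{s}}{s} \frac{ds}{2\pi i}+\mathcal{P}(x, T),
\end{equation}
where $\sigma > \frac{4}{3}$ and $\mathcal{P}(x, T)$ collects the usual Perron remainder arising from hyperbolic conjugacy classes $\gamma$ with $\mathrm{N}(\gamma)$ close to $x$. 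The dominant pole of the logarithmic derivative lies at $s = \frac{4}{3}$, coming from $\vartheta_{3} \in \mathcal{L}(\Gamma_{2}, \chi_{3}, \frac{4}{3})$, and by \cref{thm:Shimura} its residue is precisely the main term $\frac{3}{4} x^{\frac{4}{3}}$ subtracted off in~\eqref{eq:metaplectic-PGT}.

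Next I would shift the contour to $\Re(s) = -\varepsilon$, collecting residues at $s = 1 \pm it_{\tilde{f}}$ for every $\tilde{f} \in \mathcal{L}(\Gamma_{2}, \chi_{3})$ with $|t_{\tilde{f}}| \leq T$; these residues assemble into the spectral sum appearing on the right-hand side of~\eqref{eq:explicit-formula-metaplectic}. The horizontal segments $\Im(s) = \pm T$ are controlled by a 3-dimensional analogue of \cref{lem:Hejhal-asymptotic,lem:Ruelle-convexity}, which follows in turn from the Friedman trace formula applied to $(\Gamma_{2}, \chi_{3})$, combined with Weyl's law in the Kleinian setting. The left-hand vertical segment at $\Re(s) = -\varepsilon$ is strictly smaller by the functional equation and convexity on the Ruelle zeta.

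To handle $\mathcal{P}(x, T)$, I would deploy a 3-dimensional variant of the Brun--Titchmarsh-type bound (\cref{prop:Brun-Titchmarsh}), which reduces the short-interval hyperbolic sum to the pointwise exponent $\delta_{1}^{(3)}(\chi_{3})$ currently available. Starting from the trivial bound $\delta_{1}^{(3)}(\chi_{3}) \leq \frac{5}{3}$ inherited from~\eqref{eq:5/3}, I would iterate the contour-shift-plus-Brun--Titchmarsh procedure exactly as in the closing paragraph of the proof of \cref{prop:spectral-explicit-formula}: each round optimises $T$ and invokes the 3-dimensional Weyl's law to absorb any residual term of the shape $x^{\delta_{1}^{(3)}(\chi_{3})+\varepsilon}/T$ into the desired $O_{\varepsilon}(x^{\frac{4}{3}+\varepsilon}/T)$, with the iteration terminating once the running exponent descends to $\frac{4}{3}$.

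The main obstacle, relative to the 2-dimensional setting, is the heavier weight in the Kleinian Weyl's law, whose leading term is proportional to $T^{3}$ rather than $T^{2}$, which tightens the admissible range of $T$. Keeping track of this factor throughout the horizontal and left-hand contour estimates is what pins down the truncation $T \leq x^{\frac{1}{6}}$ in place of the naively expected $T \leq x^{\frac{1}{3}}$ suggested by the gap $\frac{5}{3}-\frac{4}{3}$. A secondary simplification that must be verified is that the residual spectrum of $-\Delta$ on $\mathcal{L}(\Gamma_{2}, \chi_{3})$ contains no point strictly between $\Re(s) = 1$ and $\Re(s) = \frac{4}{3}$ other than the pole at $s = \frac{4}{3}$ itself; this is ensured by \cref{thm:Shimura}, since any such residual form would correspond under Flicker's lift to a residual form on $\Gamma$ with $s_{f} \in (1, 2)$, and the only such form is the constant eigenfunction attached to $s_{f} = 2$. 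With these ingredients in place, the argument proceeds in perfect analogy with that of \cref{prop:explicit-formula}.
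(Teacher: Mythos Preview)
Your proposal is correct and matches the paper's own approach: the paper gives no detailed proof, stating only that the result ``follows in an identical manner from the standard contour shift argument as in the proof of \cref{prop:spectral-explicit-formula,prop:explicit-formula}'' via Friedman's Kleinian trace formula and Nakasuji's method, which is precisely the Perron--contour-shift--iterate scheme you have outlined. Your additional remark that \cref{thm:Shimura} rules out extraneous residual spectrum in the strip $1 < \Re(s) < \frac{4}{3}$ is a welcome clarification that the paper leaves implicit.
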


In alignment with the classical $3$-dimensional setting as addressed by Nakasuji, the natural limitation of the metaplectic explicit formula~\eqref{eq:explicit-formula-metaplectic} represents $\delta_{1}^{(3)}(\chi_{3}) \geq \frac{7}{6}$. Going beyond~this threshold necessitates a more sophisticated analysis based on the \textit{smooth} explicit formula \`{a} la Balog et al.~\cites[Lemma~4.1]{BalogBiroCherubiniLaaksonen2022}. Such investigations are reserved for future endeavours.

The following bound unconditionally generalises the second display on~\cites[Page~792]{Koyama2001}.
\begin{proposition}\label{prop:SES-metaplectic}
Let $X, T \geq 2$. Then
\begin{equation}
\mathop{\sum \sum}_{\substack{\tilde{f} \in \mathcal{L}(\Gamma_{2}, \chi_{3}) \\ 0 < t_{\tilde{f}} \leq T}} X^{it_{\tilde{f}}} \ll_{\varepsilon} T^{\frac{7}{4}+\varepsilon} X^{\frac{1}{12}+\varepsilon}+T^{2}.
\end{equation}
\end{proposition}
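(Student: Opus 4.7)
The plan is to use the generalised Shimura correspondence (\cref{thm:Shimura}) to transfer the spectral exponential sum over the metaplectic spectrum $\mathcal{L}(\Gamma_{2}, \chi_{3})$ to a classical spectral sum over $\mathcal{L}(\Gamma, \mathbbm{1})$, and then to invoke a Luo--Sarnak-type bound in the Bianchi setting.

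First, I would apply \cref{thm:Shimura} to identify each $\tilde{f} \in \mathcal{L}(\Gamma_{2}, \chi_{3})$ with some $f \in \mathcal{L}(\Gamma, \mathbbm{1})$ via $t_{f} = \pm 3 t_{\tilde{f}}$. The target sum thus rewrites, up to an overall factor of two accounting for the sign ambiguity, as
\[
\mathop{\sum \sum}_{\substack{\tilde{f} \in \mathcal{L}(\Gamma_{2}, \chi_{3}) \\ 0 < t_{\tilde{f}} \leq T}} X^{i t_{\tilde{f}}} \ll \Bigg|\mathop{\sum \sum}_{\substack{f \in \mathcal{L}(\Gamma, \mathbbm{1}) \\ 0 < t_{f} \leq 3T}} (X^{1/3})^{i t_{f}}\Bigg|,
\]
which serves as the $3$-dimensional analogue of the reduction underlying the proof of \cref{prop:Luo-Sarnak-analogue} via \cref{thm:Shimura-vector-valued}.

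Second, I would invoke the Luo--Sarnak-type spectral exponential sum bound for the Bianchi group $\Gamma = \mathrm{PSL}_{2}(\mathbb{Z}[\omega])$ in the form
\[
\mathop{\sum \sum}_{\substack{f \in \mathcal{L}(\Gamma, \mathbbm{1}) \\ 0 < t_{f} \leq T^{\prime}}} Y^{i t_{f}} \ll_{\varepsilon} (T^{\prime})^{7/4+\varepsilon} Y^{1/4+\varepsilon}+(T^{\prime})^{2}.
\]
This bound, traceable back to Koyama~\cites{Koyama2001}, is derived via the Kuznetsov trace formula for $\Gamma$ with a test function spectrally localized near $t \asymp T^{\prime}$ and oscillating like $Y^{it}$, followed by a stationary phase analysis of the Bessel integral transform and the Weil-type bound on Kloosterman sums over $\mathbb{Z}[\omega]$. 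The $(T^{\prime})^{2}$ term originates from the diagonal contribution together with Weyl's law input, and crucially renders the bound unconditional.

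Setting $Y = X^{1/3}$ and $T^{\prime} = 3T$, the above bound yields precisely $T^{7/4+\varepsilon} X^{1/12+\varepsilon}+T^{2}$, as desired. The main technical obstacle is the stationary phase analysis of the Bessel-type kernel over $\mathbb{Z}[\omega]$: since the Kloosterman-sum side runs over a two-dimensional Gaussian lattice in $\mathbb{C}$, one must track multivariable oscillation in both real and imaginary directions of the modulus $c \in \mathbb{Z}[\omega]$, which is genuinely more delicate than in the $\mathrm{PSL}_{2}(\mathbb{Z})$ setting underlying \cref{prop:Luo-Sarnak-analogue}. Once this oscillatory analysis is in place, the rest of the argument is a straightforward change of variables via the Shimura correspondence.
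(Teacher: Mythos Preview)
Your overall architecture is exactly that of the paper: apply \cref{thm:Shimura} to pass from the metaplectic spectrum to the untwisted Bianchi spectrum via $t_{f} = \pm 3 t_{\tilde{f}}$, thereby replacing $X$ by $X^{1/3}$, and then feed in a spectral exponential sum bound of Luo--Sarnak type for $\mathrm{PSL}_{2}(\mathbb{Z}[\omega])$ of the shape $(T')^{7/4+\varepsilon} Y^{1/4+\varepsilon}+(T')^{2}$.

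There is, however, a genuine gap in how you justify the Bianchi input. Koyama's bound~\cites{Koyama2001} is \emph{conditional} on the mean-Lindel\"{o}f hypothesis for symmetric square $L$-functions over the ground field; it is not obtained from the Weil bound for Kloosterman sums plus stationary phase, and the presence of the diagonal term $(T')^{2}$ does nothing to render the nontrivial term $(T')^{7/4} Y^{1/4}$ unconditional. What the paper actually invokes is the spectral exponential sum bound~\cites[Equation~(3-48)]{Kaneko2022-2} with the parameter $\eta$ set to $0$, which is admissible \emph{unconditionally} thanks to the second-moment result for symmetric square $L$-functions~\cites[Theorem~3.2]{Kaneko2025-2}. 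The paper also notes that transporting these inputs from $\mathbb{Z}[i]$ to $\mathbb{Z}[\omega]$ rests on Kodama's Kuznetsov and pre-Kuznetsov formul{\ae}~\cites[Theorem~1.8]{Kodama2004}. You should replace your appeal to ``Weil-type bounds'' by this mean-Lindel\"{o}f input; once that is done, your argument coincides with the paper's.
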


\begin{proof}
\cref{thm:Shimura} ensures the change of variables $X \mapsto X^{\frac{1}{3}}$ in~\cites[Equation~(3-48)]{Kaneko2022-2}~by substituting $\eta = 0$ from~\cites[Theorem~3.2]{Kaneko2025-2}. Note that the transition from $\mathbb{Z}[i]$ to $\mathbb{Z}[\omega]$ is straightforward in light of the Kuznetsov and pre-Kuznetsov formul{\ae} due to Kodama~\cites[Theorem~1.8]{Kodama2004}. This is the only occasion where the cubic structure is fundamentally invoked; otherwise the machinery ought to extend seamlessly to higher metaplectic coverings.
\end{proof}

\begin{proof}[Proof of \cref{thm:main-metaplectic}]
It follows from \cref{prop:explicit-formula-metaplectic,prop:SES-metaplectic} and partial summation that
\begin{equation}
\mathcal{E}_{\Gamma_{2}}^{(3)}(x, \chi_{3}) \ll_{\varepsilon} T^{\frac{3}{4}+\varepsilon} x^{\frac{13}{12}+\varepsilon}+Tx+\frac{x^{\frac{4}{3}+\varepsilon}}{T}.
\end{equation}
Optimising $T = x^{\frac{1}{7}}$ now justifies the desideratum.
\end{proof}

\subsection*{Acknowledgements}
We thank Dimitrios Chatzakos for valuable correspondence.


\newcommand{\etalchar}[1]{$^{#1}$}
\providecommand{\bysame}{\leavevmode\hbox to3em{\hrulefill}\thinspace}
\providecommand{\MR}{\relax\ifhmode\unskip\space\fi MR }
\providecommand{\MRhref}[2]{%
  \href{http://www.ams.org/mathscinet-getitem?mr=#1}{#2}
}
\providecommand{\Zbl}{\relax\ifhmode\unskip\space\thinspace\fi Zbl }
\providecommand{\MR}[2]{%
  \href{https://zbmath.org/?q=an:#1}{#2}
}
\providecommand{\doi}{\relax\ifhmode\unskip\space\thinspace\fi DOI }
\providecommand{\MR}[2]{%
  \href{https://doi.org/#1}{#2}
}
\providecommand{\SSNI}{\relax\ifhmode\unskip\space\thinspace\fi ISSN }
\providecommand{\MR}[2]{%
  \href{#1}{#2}
}
\providecommand{\ISBN}{\relax\ifhmode\unskip\space\thinspace\fi ISBN }
\providecommand{\MR}[2]{%
  \href{#1}{#2}
}
\providecommand{\arXiv}{\relax\ifhmode\unskip\space\thinspace\fi arXiv }
\providecommand{\MR}[2]{%
  \href{#1}{#2}
}
\providecommand{\href}[2]{#2}

\end{document}